\numberwithin{equation}{section}
\theoremstyle{plain}
\newtheorem{theorem}[subsection]{Theorem}
\newtheorem{proposition}[subsection]{Proposition}
\newtheorem{lemma}[subsection]{Lemma}
\newcommand{\Q}{\mathbb{Q}}
\newcommand{\Z}{\mathbb Z}
\newcommand{\C}{\mathbb C}
\newcommand{\F}{\mathbb F}
\newcommand{\R}{\mathbb{R}}
\newcommand{\supp}{\operatorname{supp}}
\title{Multifractal behavior of polynomial Fourier series}
\author{Fernando Chamizo and Adri\'{a}n Ubis}
\address{Departamento de Matem\'{a}ticas\\
Facultad de Ciencias
\\
Universidad Aut\'{o}no\-ma de Madrid
\\
28049 Madrid. Spain}
\email{fernando.chamizo@uam.es}
\email{adrian.ubis@uam.es}
\thanks{The authors are partially supported by the grant
MTM2011-22851
from the Ministerio de Ciencia e Innovaci\'{o}n (Spain).}
\begin{document}
\maketitle

\pagestyle{myheadings} 
\markboth{\hfill{\sc F. Chamizo and A. Ubis}}{{\sc Multifractal behavior of polynomial Fourier series}\hfill}

\section{Introduction}

The so called \lq\lq Riemann's example''
\[
 R(x)=\sum_{n=1}^\infty
\frac{\sin(2\pi n^2x)}{n^2}
\]
has a long and fascinating history that has generated a vast literature (see 
\cite{butsta}, \cite[\S1]{chaubi}, \cite{Dui} and references). Just to give a glimpse of it in few
words, we mention that according to Weierstrass \cite{weie}, \cite{edgar}, Riemann
considered $R$ to be an example of a continuous nowhere differentiable function. Indeed
G.H.~Hardy \cite{hardyw} proved in 1916 that $R$ is not differentiable at any irrational value and
at some families of rational values. In 1970, J.~Gerver \cite{gerver} proved, when he was a student,
that $R$ is differentiable at infinitely many rationals. 
The combination of \cite{hardyw} and \cite{gerver} gives a full characterization of the differentiability points of $R$.

More recently several authors have shown interest on the global properties of $R$ and
allied functions (this interest was initially linked to wavelet methods \cite{jafmey},
\cite{holtch}). For instance, it is known that the (box counting) dimension of the graph of $R$ is
$5/4$, in particular it is a fractal \cite{chtams}, \cite{chacor}.

S.~Jaffard \cite{jaffard} proved that $R$ is a \emph{multifractal function}, meaning that if we
classify the points in $[0,1]$ according to the H\"older exponent of $R$,
in the resulting sets we find infinitely many distinct Hausdorff dimensions.
The terminology, introduced firstly in the context of
turbulent
fluid mechanics (see \cite{benzi}),  suggests that a multifractal object is a fractal set with an
intricate structure, containing fractal subsets of different dimensions at different scales.

The multifractal nature of a continuous function $f:[0,1]\longrightarrow\C$ is represented by its
\emph{spectrum of singularities}
\[
 d_f(\beta)=\dim_H\{x\;:\; \beta_f(x)= \beta\}
\]
where $\dim_H$ denotes the Hausdorff dimension and $\beta_f(x)$ is the H\"older exponent of $f$ at
$x$ given by
\[
 \beta_f(x)=\sup\big\{
\gamma\;:\; f\in C^\gamma(x)
\big\}
\]
with 
\[
C^\gamma(x) =\{f\;:\; |f(x+h)-P(h)|
=
O(|h|^\gamma),\text{ for some\;$P\in \C[X]$,\,$\deg P\le \gamma$}
\big\}.
\]
For $\beta_f(x)\le 1$ we have the simpler and usual definition
\[
 \beta_f(x)=\sup\big\{
\gamma\le 1\;:\; |f(x+h)-f(x)|=O(|h|^\gamma)
\big\}.
\]
We let $d_f(\beta)$ undefined if $\{x\;:\; \beta_f(x)=\beta\}$ is the empty set.
In this way, the domain of
$d_f$ is always a subset of $[0,\infty )$.

For a \lq\lq purely fractal'' function as the celebrated Weierstrass nondifferentiable function,
the graph of $d_f$ consists of a finite number of points while for a multifractal function we observe a non-discrete
graph \cite{jaffard2}.

\

The main results in \cite{jaffard} are summarized saying that for a given $\alpha>1$ the
following function (already appearing in early works of Hardy and Littlewood \cite{hardlitt})
\[
 R_\alpha(x)=\sum_{n=1}^\infty\frac{\sin(2\pi n^2x)}{n^\alpha}
\]
has the following spectrum of singularities revealing its multifractal nature.

\begin{center}
\includegraphics[scale=0.41]{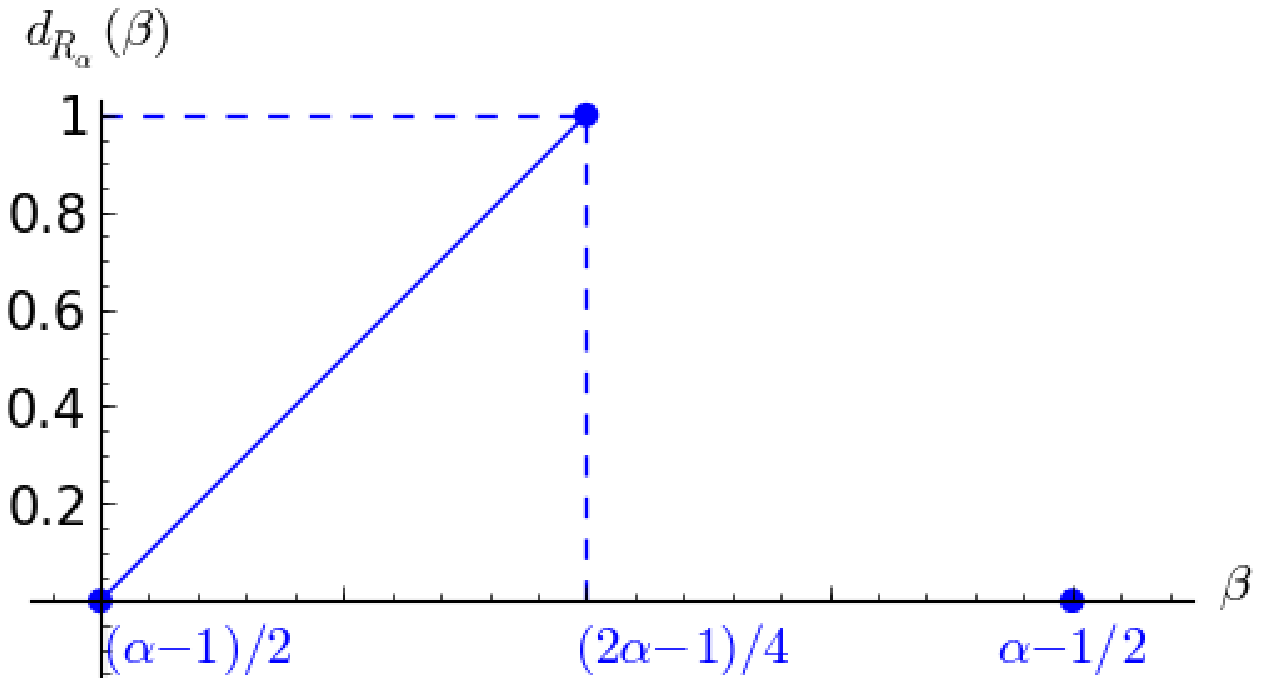}
\end{center}

For instance, taking $\alpha=2$ we deduce  that there are sets of points with increasing Hausdorff
dimension and H\"older exponents ranging from $1/2$ to $3/4$. Moreover there is a $0$-dimensional
set in which the H\"older exponent is $\alpha-1/2=3/2$, in particular $R$ is differentiable in it.
In fact, \cite{gerver} proves that this set contains a certain explicit subset $S\subset\Q$ and
\cite{hardyw} that the H\"older exponent is not greater than $3/4$ in $\R-S$. In this way, these
results prove that for $\beta>3/4$ the graph of $d_R(\beta)$ is a simple point
at $(3/2,0)$.

\

In this paper we study the spectrum of singularities of
\begin{equation}\label{deff}
 F(x)=\sum_{n=1}^\infty\frac{e\big(P(n)x\big)}{n^{\alpha}}
\end{equation}
where $e(t):=e^{2\pi i t}$ and $P\in \Z[X]$ is a polynomial of degree $k>1$. 
Note that for $k=1$ the formula defines a $C^\infty$ function outside a discrete set. The functions $R$,
$R_\alpha$ and $F$ that we have considered belong to the rather unknown realm of the Fourier series
with gaps whose coefficients decay too slow to be $C^\infty$ and whose frequencies do not grow quick
enough to be lacunary series in the classical sense \cite[Ch.V]{zygm}.
In harmonic analysis there are old conjectures (by W.~Rudin \cite{rudin}) suggesting that gaps in the
frequencies induce some regularity even for general coefficients.

For $k=2$, $R_\alpha$ is no other than the imaginary part of $F$. 
The differences between $R_\alpha$ and $F$ for $k>2$ are fundamental. Basically $R_\alpha$ is a
fractional integral of the automorphic theta function $\theta(z)=\sum_{n=-\infty}^\infty e(n^2z)$
and the local behavior of $R_\alpha$ at a given $x$ is determined by the convergents in the
continued fraction of $x$ and by the Fourier expansion of $\theta$ at the cusps (every rational
number is equivalent to a cusp). This approach is developed in \cite{chtams} and \cite{milsch} in a
broader context.

For $k>2$ there is no underlying automorphic function. Actually one can control locally $F$ only
in the intervals $(a/q-h,a/q+h)$ with $h<q^{-k}$ whose union is  a set of Hausdorff dimension $2/k$
(see \cite[\S1]{chaubi} and Jarn\'{\i}k's
theorem \cite{Fal}) and it has positive measure only for $k=2$. A more important barrier to treat
the case $k>2$ is that using Poisson summation, one improves the trivial estimate only when 
$h<q^{-k/2}$ (see the comments in the introduction of \S3) while for rational approximations
of a point we have to deal with $h$ almost like $q^{-1}$ (cf. Lemma~\ref{contfrac} with large $r$).

To overcome these difficulties one has to go beyond the local analysis at individual points,
considering instead the problem globally, in average. But it is important to keep in mind that the
spectrum of singularities requires to deal with subsets having fractional  Hausdorff dimension. Then this average has to be done in restricted sets and, for instance, integration that is useful to
compute the dimension of the graph of $F$ \cite{chacor} \cite{chaubi} is too coarse here. Once this
fine average is carried out, we can construct some fractal sets whose elements have
special  diophantine approximation properties  that allow to extract a main term for the variation of $F$ in some ranges. 
Such a main term depends in some way on sums of the form
$q^{-1/2}\sum_{n=1}^qe\big(aP(n)/q\big)$. The purely arithmetic  fact that this sum is typically
greater for $q=p^n$ than for $q=p$ prime if $P$ has multiple zeros modulo $q$,  suggests  an
unexpected dependence on the maximal multiplicity $\nu_F$ of
a  (complex) zero of $P'$. Namely on
\[
 \nu_0=\max(\nu_F,2).
\]

\medskip

Our main result is a lower bound for the spectrum of singularities of $F$.  It is worth remarking that in this context any lower bound is highly non-trivial. The dependence on
$\nu_0$ allows to give the same result for $P(n)=n^2$ (Riemann
example) and $P(n)=n^3$. This is noticeable taking into account that we expect the bound to be
sharp in some ranges.

\begin{theorem}\label{mainth}
Consider $F$ as before with $1+k/2<\alpha<k$. Then for $0\le \beta< 1/2k$
\[ 
d_F\big(\beta+\frac{\alpha-1}{k}\big)\ge (\nu_0+2)\beta.
\]
\end{theorem}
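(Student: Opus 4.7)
The approach is to construct, for each $\beta\in[0,1/(2k))$, a fractal set $E_\beta\subset[0,1]$ on which $\beta_F(x)=\beta+(\alpha-1)/k$ and whose Hausdorff dimension is at least $(\nu_0+2)\beta$. The blueprint is Jaffard's for Riemann's example $R_\alpha$, \emph{mutatis mutandis}, but the automorphic transformation of $\theta$ used when $k=2$ must be replaced by a direct Poisson-summation/Weyl-sum analysis carried out globally on $E_\beta$, as emphasized in the introduction.

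\emph{Local expansion and arithmetic input.} For a rational $a/q$, splitting $n=qn'+r$ with $0\le r<q$ and applying Poisson summation in $n'$ yields an expansion
\[
F(a/q+h)-F(a/q) \;=\; c_{P,\alpha}\, S(a,q)\, h^{(\alpha-1)/k}\, \Psi(h) \;+\; \text{error},
\]
where $S(a,q)=q^{-1}\sum_{n=1}^q e(aP(n)/q)$ is the normalized complete Weyl sum and $\Psi$ a bounded stationary-phase factor. The arithmetic parameter $\nu_0$ enters through the size of $S(a,q)$: generically $|S(a,q)|\ll q^{-1/2}$, but $|S(a,q)|\asymp q^{-1/\nu_0}$ precisely when $q=p^m$ is a prime power and $P'$ has a zero of maximal multiplicity $\nu_F$ modulo $p$. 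Matching the expansion with the target H\"older exponent by equating $|S(a,q)|\,h^{(\alpha-1)/k}\asymp h^{\beta+(\alpha-1)/k}$ at $h\asymp q^{-\tau}$ forces $\tau=1/(\nu_0\beta)$, singling out the saturating scales.

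\emph{Cantor construction and H\"older exponent.} Define $E_\beta$ as a limsup set of neighbourhoods of radius $\asymp q^{-\tau}$ around rationals $a/q$ with $q$ drawn from the distinguished prime-power family above, arranged in a nested Cantor-like fashion. For $x\in E_\beta$ the saturated scales give the upper bound $\beta_F(x)\le\beta+(\alpha-1)/k$, while applying the local expansion at the remaining scales yields the matching lower bound $\beta_F(x)\ge\beta+(\alpha-1)/k$, so $\beta_F(x)=\beta+(\alpha-1)/k$ on $E_\beta$. Equipping $E_\beta$ with the natural Cantor measure (uniform among children at each stage) and running a standard Frostman calculation, the exponent $(\nu_0+2)\beta$ should emerge from the balance between the number of admissible prime-power denominators at each scale $q$ and the available spread of numerators $a$ modulo $q$, the two pieces contributing $\nu_0\beta$ and $2\beta$ respectively in analogy with the Jarn\'{\i}k--Besicovitch dimension count.

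\emph{Main obstacle.} The crux of the proof is controlling the error in the local expansion in the intermediate range $q^{-k}<h<q^{-k/2}$, which is outside the region where Poisson summation provides an unconditional improvement over the trivial bound. A purely pointwise argument breaks down for $k>2$ because of the absence of an underlying automorphic structure; instead one must establish the expansion in an averaged sense over $E_\beta$ itself, exploiting the diophantine constraints enforced by the Cantor construction to extract the needed cancellations. This is the sole place where the restriction $1+k/2<\alpha<k$ on the range of $\alpha$ should genuinely be felt.
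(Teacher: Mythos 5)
Your overall architecture --- a Cantor construction around distinguished rationals, a Poisson-summation main term proportional to a normalized Weyl sum, matching saturating scales, and averaged control of the oscillation elsewhere --- is indeed the paper's strategy. But there is a genuine numerical error in the arithmetic input that leaves you one unit of dimension short whenever $\nu_F>1$. You posit $|S(a,q)|\asymp q^{-1/\nu_0}$ for $q=p^m$ a prime power, hence $\tau=1/(\nu_0\beta)$. The constant-phase mechanism you are implicitly invoking (an arithmetic progression $n\equiv z\pmod p$, with $z$ a zero of $P'$ of maximal multiplicity, along which $e\big(aP(n)/q\big)$ is constant) in fact forces $q=p^{\nu_F+1}$, and then the main term scales as $p^{-1}=q^{-1/(\nu_F+1)}$, \emph{not} $q^{-1/\nu_0}$; see Proposition~\ref{pointw}, where the main contribution is $\tau_*/p$ with $|\tau_*|\ll 1$. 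For $\nu_F\ge 2$ one has $\nu_0=\nu_F\ne\nu_F+1$. Running the dimension count with your $\tau$ and $q=p^{\nu_0}$, the denominators contribute $\asymp Q^{1/\nu_0}$ choices and the numerators $\asymp Q$ per parent, against scale $Q^{-\tau}$, giving $\dim\approx(1+1/\nu_0)/\tau=(\nu_0+1)\beta$ --- strictly less than $(\nu_0+2)\beta$. The paper's choice $r=1/((\nu_F+1)\beta)$ and $q=p^{\nu_F+1}$ gives $(1+1/(\nu_F+1))(\nu_F+1)\beta=(\nu_F+2)\beta=(\nu_0+2)\beta$. Note also that for $\nu_F=1$ the mechanism is different altogether: $q$ is \emph{prime} and $q^{-1/2}$ comes from Weil's bound for the complete sum $\tau_0$, not from a progression of step $p$; the coincidence $\nu_F+1=2=\nu_0$ hides the distinction and is why your formula accidentally survives in that case.

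Several further ingredients are misstated or missing. The range you flag, $q^{-k}<h<q^{-k/2}$, is a small slice of the difficulty: in the Cantor construction $h$ runs from $\asymp q^{-r}$ all the way up to the parent scale $\widetilde q^{\,-r}$, essentially $q^{-o(1)}$, and this entire range is handled via restricted averages over the diophantine sets $A(\widetilde q^{\,r})$ (Propositions~\ref{osc1}, \ref{osc3}, \ref{cheby}). To make the Cantor set nonempty and of the right size you also need: a positive-proportion lower bound for $|\tau_0|$ over primes (via the Riemann Hypothesis over finite fields, Lemma~\ref{lbtau}) and for $|\tau_*|$ over residues (Tur\'an power sums, Lemma~\ref{turan}); a separation lemma for fractions with prime-power denominators (Lemma~\ref{spacing}), which do not space as uniformly as those with prime denominators; a Cantor dimension lemma robust to deleting a vanishing proportion of bad children at each stage (Lemma~\ref{cantor2}); and a Chebotarev/Frobenius input to guarantee a positive density of primes where $P'$ attains its maximal local multiplicity. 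Finally, a plain limsup set, as you first describe it, does not pin $\beta_F$ down from below on the set --- the nested structure together with the deletion of exceptional numerators at each level is what gives pointwise control of the oscillation at \emph{every} scale, not just the saturating ones.
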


\

We complement this result with a upper bound that in particular implies that $F$ is actually a
multifractal function. 

\begin{theorem}\label{mainth_upper}
Let $I= \big[0, 1/2k\big]$ with the ranges as before. 
There exists $\omega:I\longrightarrow [0,1]$ continuous at $0$  and strictly increasing
with $\omega(0)=0$ and $w(\frac{1}{2k}^-)=1$ such that $d_F\big(\beta+\frac{\alpha-1}{k}\big)\le
\omega(\beta)$. In fact
\[
\omega(\beta)=
\begin{cases}
\frac{2\beta}{2^{-k}+\beta}&\text{ if } 0\le \beta< \frac{1}{k2^{k}}
\\
\frac 32-\sqrt{\frac{k+4}{4k}-2\beta} &\text{ if } \frac{1}{k2^{k}}\le\beta<\frac{1}{2k}
\end{cases}
\]
is a valid choice.
\end{theorem}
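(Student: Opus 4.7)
The argument will follow the Jaffard strategy for upper bounds on multifractal spectra: quantify the behavior of $F$ near every rational $a/q$, translate ``$\beta_F(x)$ large'' into a diophantine condition on $x$, and bound the Hausdorff dimension of the resulting set by a Jarn\'{\i}k-type covering computation.

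For the local step, I would apply a smooth dyadic splitting to the defining sum of $F$ and then Poisson summation, to obtain an expansion of the form
\[
F(a/q+h) = T_{a/q}(h) + q^{-\alpha/k}\, G(a,q)\, \Psi(hq^k) + E(h),
\]
valid in a suitable range of $h$, where $T_{a/q}$ is a local polynomial, $\Psi$ is a fixed bounded profile, $E$ is a controlled error, and $G(a,q)=q^{-1/2}\sum_{n=1}^{q} e\bigl(aP(n)/q\bigr)$ is the normalized Gauss sum attached to $P$. Imposing $\beta_F(x)\ge \beta+(\alpha-1)/k$ then forces, for each admissible scale $h\asymp q^{-k}$ and each nearby rational, the inequality $q^{-\alpha/k}|G(a,q)|\lesssim |x-a/q|^{\beta+(\alpha-1)/k}$, so that $x$ must lie very close to $a/q$ whenever $|G(a,q)|$ is not too small. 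This puts the set $\{x:\beta_F(x)\ge \beta+(\alpha-1)/k\}$ inside a $\limsup$ of neighborhoods of rationals, restricted to denominators carrying a sizeable Gauss sum.

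Two coverings of this $\limsup$ set produce the two branches of $\omega$. In the regime of small $\beta$, I would restrict the denominators to prime powers $q=p^m$ with the smallest admissible base prime, where $|G(a,q)|$ attains its maximal order (here the multiplicity $\nu_0$ of roots of $P'$ and the factor $2^{-k}$ enter, the latter reflecting base-$2$ denominators); the thinness of this family of rationals yields $\omega(\beta)=2\beta/(2^{-k}+\beta)$. In the regime of larger $\beta$, I would use all denominators together with the generic size of $|G(a,q)|$ and optimize the Jarn\'{\i}k balance between the approximation exponent $\tau$ and the Gauss-sum size; the resulting quadratic in $\tau$ is what produces the square root in the formula $3/2-\sqrt{(k+4)/(4k)-2\beta}$. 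Taking the minimum of the two bounds gives the stated $\omega$, and continuity at $0$, monotonicity, and $\omega(1/2k^-)=1$ follow from direct computation.

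The main obstacle is Step~1, the validity and precision of the local expansion. As the authors themselves point out in the introduction, a naive Poisson summation produces useful control only for $|h|\ll q^{-k/2}$, whereas the diophantine argument above demands information down to $|h|\approx q^{-\tau}$ with $\tau$ closer to $1$. Closing this gap is the delicate part: one is forced to pass from pointwise to averaged estimates over carefully chosen sets of rationals, and to exploit the arithmetic of $G(a,q)$ — in particular its enhancement at prime-power denominators when $P'$ has multiple roots, which is precisely the phenomenon that explains the appearance of the invariant $\nu_0$.
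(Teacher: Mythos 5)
Your proposal has the key implication backwards, and this error propagates through the rest of the argument. You claim that $\beta_F(x)\ge \beta+(\alpha-1)/k$ forces $x$ to lie \emph{very close} to rationals $a/q$ with large Gauss sum, and thereby places $\{x:\beta_F(x)\ge\beta+\rho\}$ inside a $\limsup$ of small neighborhoods. But if the main term of $F(a/q+h)-F(a/q)$ is of order $q^{-1/2}|h|^{(\alpha-1)/k}$ (when $|\tau_0|\asymp\sqrt q$) and one assumes $\beta_F(x)\ge\rho+\beta$ with $h=x-a/q$, the comparison $q^{-1/2}|h|^{\rho}\lesssim|h|^{\rho+\beta}$ yields $|x-a/q|\gtrsim q^{-1/(2\beta)}$, i.e.\ $x$ must be \emph{far} from such rationals, not close. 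In any case the set $\{x:\beta_F(x)\ge\rho+\beta\}$ for $\beta<1/2k$ is expected to have full measure, so covering it could never give a dimension bound strictly less than~$1$. What the paper actually proves (and what your argument should aim for) is a \emph{lower} bound on $\beta_F(x)$ valid for all $x$ outside a small exceptional set; then $\{x:\beta_F(x)=\rho+\beta\}$ is contained in that exceptional set and its dimension is bounded.

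Two further points of misalignment with the real proof. First, the factor $2^{-k}$ in the first branch has nothing to do with base-$2$ or prime-power denominators; it is the Weyl exponent $2^{1-k}$ coming from Weyl's inequality (Proposition~\ref{weyl}), applied to bound $\beta_F(x)$ from below for $x\notin E_r$, combined with Jarn\'{\i}k's theorem $\dim_H E_r=2/r$. This step does not use Poisson summation at all and hence is unaffected by the $|h|<q^{-k/2}$ barrier you worry about. Second, $\nu_0$ does not enter Theorem~\ref{mainth_upper} at all; it is an ingredient of the lower bound in Theorem~\ref{mainth} only. The second branch of $\omega$ indeed uses the averaged oscillation machinery you gesture at in your final paragraph (here Proposition~\ref{osc2}), but the role of that average is to show that the set of rationals near which $F$ oscillates too violently is rare enough to be negligible in the Jarn\'{\i}k covering (Lemma~\ref{contfrac} and the argument around \eqref{triangF}--\eqref{inclEA}), which is a rather different mechanism from the ``Gauss sums small $\Rightarrow$ better regularity'' picture you sketch. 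As it stands your proposal does not give a correct route to the upper bound.
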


Note that, for $\beta\in
\big[0, k^{-1}2^{-k}\big)$, Theorem~\ref{mainth} and Theorem~\ref{mainth_upper} imply that 
$d_F\big(\beta+\frac{\alpha-1}{k}\big)$  is bounded between two continuous functions that vanish at $\beta=0$  and
hence $d_F$ cannot take a finite discrete set of values.

\medskip

Geometrically, we have that the graph of $d_F$ is contained in the
shadowed region in the indicated range. We
think that for $\beta$ small, the inequality in Theorem~\ref{mainth} is actually an equality (see \S7).

\begin{center}
\begin{tabular}{c}
\includegraphics[scale=0.41]{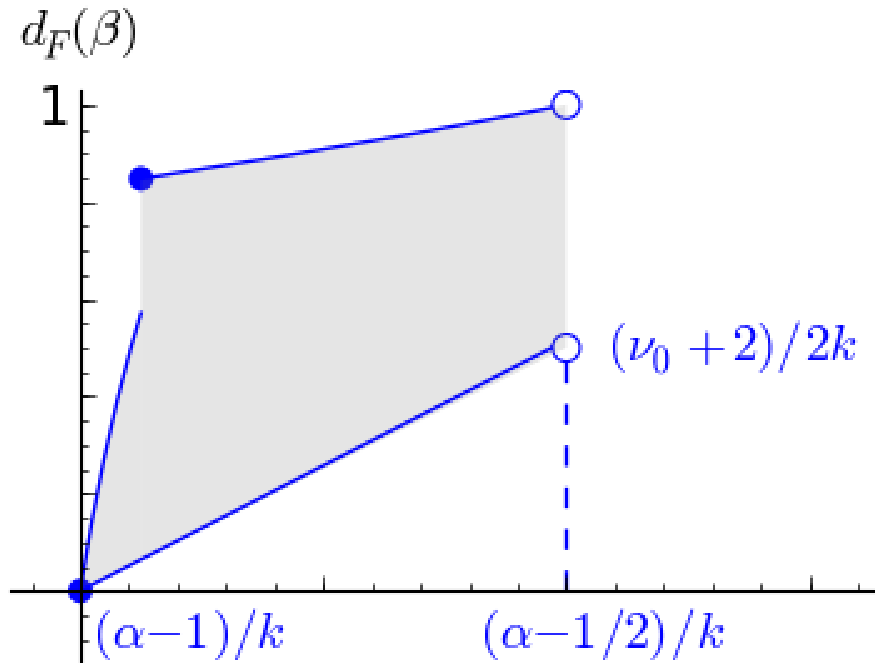}
\end{tabular}
\end{center}

We have restricted $\alpha$ in our main theorem to the range $1+k/2<\alpha<k$. The  bound $1+k/2$ is
intrinsic to the method. The upper bound~$k$ is not critical but it would require first to apply a wavelet transform to $F$ in order to separate 
whole derivatives,
as in \cite[Corollary~1.33]{phdubis} for $P(n)=n^k$, and then to use our analysis here.

\

The structure of the paper is as follows. In \S2 we study locally $F$ by Poisson summation formula
and exponential sums methods, getting as a by-product the exact spectrum of singularities for
$R_\alpha$
(this gives a simplified
proof of some results in \cite{jaffard}). In \S3 we state several results to control
the oscillation of $F$ for most rationals in some restricted sets, beyond the ranges obtained in
\S2. The idea to get the lower bound for $d_F$ is to
perform an approximation process using special rational values to construct a subset $A\subset
[0,1]$ such that $\beta_F(x)$ is fixed for every $x\in A$. The process leads to a Cantor-like set
construction and we devote \S5 to define generalized Cantor sets and to compute their Hausdorff
dimensions. We need special arithmetic considerations about exponential sums and
the spacing of the elements of some sets,  that are included in \S4. Finally in \S6 we combine all
the
tools to prove Theorem~\ref{mainth} and Theorem~\ref{mainth_upper}. 
As an appendix, we include a last section with some heuristics and conjectural properties of the spectrum of singularities of $F$.
In fact, the reader can find convenient to read it after this introduction to learn our motivation and the limitations of our method.

\

The approximation process transfers our knowledge about the oscillation at rational values to the
selected points. The behavior of some exponential sums imposes some restrictions on the
denominators, but we think that the method is lossless in terms of the dimension  (see \S7). 

\

The idea of using averages over rationals in this problem was first developed in \cite[Chapter 1]{phdubis}, 
for the case $P(x)=x^k$. We would like to take this opportunity to point out an important error in Theorems 1.3 and~1.4 (restated in \S1.6) there. It  is claimed that $\beta_F(x)=w$ for any $x$ in a set of positive Hausdorff dimension and what can be really demonstrated is just $\beta_F(x)\le w$
because for the lower bounds of the Hausdorff dimension one would need restricted averages as in Proposition~\ref{osc3} here.

The study of the spectrum of singularities of $F$ for $P(x)=x^k$ via Poisson summation was initiated in  \cite[\S4]{chaubi}. We would like to remark that there is a small typo in that paper: we defined $s(x)=\liminf_n s_n$ while it should be defined in the same way but restricting $n$ to subsequences $n_k$ for which $\lim_k r_{n_k} =r(x)$.

\

\section{Local analysis}

In this section we tackle two problems: The approximation of $F$ around rational values and upper
bounds for $\beta_F(x)$ in terms of the approximation of $x$ by rationals. In both cases we assume
that the
leading coefficient of $P$ is $c_0>0$. Indeed this is not an actual restriction because the
sign changes under conjugation.

\smallskip

We address the first problem via Poisson summation formula. Our first result is a simple general statement adapted to our setting and the second is a consequence after the estimation of some oscillatory sums and integrals

\begin{proposition}\label{poisson}
Assume $1\le k/2<\alpha$. Then for any 
$0<h<1$ and any irreducible fraction $a/q$, $0\le a<q\le1$, we have the absolutely convergent
expansion
\[
F\big(\frac aq+h\big)-F\big(\frac aq\big)=
q^{-1}h^{(\alpha-1)/k}
\sum_{m=-\infty}^\infty
\tau_m\widehat{g}_h\big(h^{-1/k}q^{-1}m\big)
\]
where
\[
 \tau_m =\sum_{n=1}^q
e\big(\frac{aP(n)+mn}{q}\big)
\quad\text{and}\quad
g_h(x)=\Phi\big(h^{-1/k}x\big)
\frac{e\big(hP(h^{-1/k}x)\big)-1}{x^\alpha}
\]
with $\Phi\in C^\infty$, $\supp\Phi\subset\R^+$, 
$\Phi\big|_{[1,\infty)}=1$
and 
$\widehat{g}_h(\xi)=\int g_h(x)e(-\xi x)\; dx
=O\big((1+|\xi|)^{-\delta}\big)$ for some $\delta >1$.
\end{proposition}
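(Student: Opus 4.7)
The plan is to derive the expansion by Poisson summation after splitting $F(a/q+h)-F(a/q)$ into residue classes modulo $q$, and to obtain the decay of $\widehat{g}_h$ via a stationary phase analysis of the rescaled oscillatory integral.

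First, I would write $F(a/q+h)-F(a/q)=\sum_{n\ge 1}e(aP(n)/q)(e(hP(n))-1)/n^\alpha$ and absorb the condition $n\ge 1$ into the cutoff by setting $f(x)=\Phi(x)(e(hP(x))-1)/x^\alpha$, so that $f(n)$ equals the $n$-th summand for $n\ge 1$ and vanishes for $n\le 0$. Since $P\in\Z[X]$ yields $P(qm+r)\equiv P(r)\pmod q$, splitting by $n\equiv r\pmod q$ and applying Poisson summation to each inner sum $\sum_{m\in\Z}f(qm+r)$ gives $q^{-1}\sum_{\ell}\widehat{f}(\ell/q)e(\ell r/q)$; summing over $r=1,\dots,q$ collapses the $r$-dependence into the exponential sum $\tau_\ell$. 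The rescaling relating $f$ to $g_h$ is dictated by
\[
hP(h^{-1/k}y)=c_0 y^k+\sum_{j<k}c_j h^{(k-j)/k}y^j,
\]
whose coefficients are uniformly bounded for $h\in(0,1)$. Concretely, $f(x)=h^{\alpha/k}g_h(h^{1/k}x)$ gives $\widehat{f}(\eta)=h^{(\alpha-1)/k}\widehat{g}_h(h^{-1/k}\eta)$, which produces the displayed identity.

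For the decay bound I would split $\widehat{g}_h=I_1-I_2$ according to the two summands $e(hP(h^{-1/k}x))$ and $-1$. In $I_2(\xi)$ the phase is just $-\xi x$, with no stationary point, and iterated integration by parts gives decay faster than any polynomial in $|\xi|$. In $I_1(\xi)$ the phase $\psi(x)=hP(h^{-1/k}x)-\xi x$ satisfies $\psi'(x)\sim kc_0 x^{k-1}-\xi$ for $x\gg h^{1/k}$, so for $|\xi|$ large there is a unique stationary point $x_0\sim(\xi/(kc_0))^{1/(k-1)}$ with $|\psi''(x_0)|\asymp|\xi|^{(k-2)/(k-1)}$ and amplitude $x_0^{-\alpha}\asymp|\xi|^{-\alpha/(k-1)}$. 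Standard stationary phase (or the van der Corput second-derivative test) then yields $|I_1(\xi)|\lesssim|\xi|^{-(2\alpha+k-2)/(2(k-1))}$, and the exponent exceeds $1$ precisely when $\alpha>k/2$, which is our hypothesis.

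The hard part will be carrying out the stationary phase estimate uniformly across the range of $x$, handling both the transition region $x\asymp h^{1/k}$ where $\Phi$ is smoothly cut off and the tail $x\to\infty$. Once $\delta>1$ is secured, absolute convergence of the expansion follows from $|\tau_m|\le q$ together with $\sum_m(1+h^{-1/k}q^{-1}|m|)^{-\delta}<\infty$, which retrospectively justifies the Poisson step.
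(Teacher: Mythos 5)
Your proposal is correct and follows essentially the same path as the paper: split $F(a/q+h)-F(a/q)$ into residue classes modulo $q$ (using $P(qm+r)\equiv P(r)\pmod q$), apply Poisson summation to collapse the $r$-dependence into $\tau_m$, rescale via $x\mapsto h^{-1/k}x$ to relate $\widehat f$ to $\widehat g_h$, and obtain the decay of $\widehat g_h$ by stationary phase at the unique critical point $x_0\asymp\xi^{1/(k-1)}$, giving exponent $(2\alpha+k-2)/(2(k-1))>1$ exactly when $\alpha>k/2$. Your explicit $I_1-I_2$ split and the computed decay exponent agree with the paper's $\xi^{(1-\alpha-k/2)/(k-1)}$, so there is no substantive divergence.
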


Note that for $x$ positive, $\lim_{h\to 0^+}g_h(x)=x^{-\alpha}\big(e(c_0x^k)-1\big)$. The condition
$k/2<\alpha$ is only to assure easily the convergence but can be relaxed (see the proof). It is
important to note for future applications of this result that $h>0$ is not an actual restriction
because $F(a/q-h)-F(a/q)$ is the complex conjugate of $F\big((q-a)/q+h\big)-F\big((q-a)/q\big)$. 

In subsequent applications the   main term will come from $m=0$.

\begin{proposition}\label{asymp}
Under the hypotheses of the previous result, assuming also $\alpha<k$ and $q$ prime,  we have
\[
F\big(\frac aq+h\big)-F\big(\frac aq\big)
=A\frac{\tau_0}{q}(c_0h)^{(\alpha-1)/k}+O\big(h^{\alpha/k}q^{1/2}\big)
\]
where the $O$-constant only depends on~$F$ and 
\[
A=\frac{(2\pi)^{(\alpha-1)/k}}{k}e\big(\frac{1-\alpha}{4k}\big)\Gamma\big(\frac{1-\alpha}{k}
\big).
\]
Moreover the result extends to $\alpha=k$ introducing a factor $|\log h|$ in the error term.
In this extended range $k/2<\alpha\le k$, the result still applies for~$q$ square-free introducing an extra $q^\epsilon$ factor in the error term ($\epsilon>0$) and allowing the $O$-constant to depend on $\epsilon$. In the special case $P(x)=c_0x^k$ or for any polynomial of degree 2, this latter form of the result also holds for any $q\ge 1$.
Indeed, the quadratic case holds with $\epsilon=0$.
\end{proposition}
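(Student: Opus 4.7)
The plan is to apply Proposition~\ref{poisson} and isolate the $m=0$ summand as the main term while bounding the rest via Weil's theorem.

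For the $m=0$ contribution, the task reduces to computing $\widehat{g}_h(0)$ asymptotically. After the substitutions $y=h^{-1/k}x$ and then $w=(c_0h)^{1/k}y$ one finds
\[
\widehat{g}_h(0)=c_0^{(\alpha-1)/k}\int_0^\infty\Phi\bigl(w/(c_0h)^{1/k}\bigr)\frac{e(w^k+R_h(w))-1}{w^\alpha}\,dw,
\]
where $R_h(w)$ collects the lower-order terms of $hP$ and carries positive powers of $h^{1/k}$. As $h\to 0^+$ the cutoff $\Phi\to 1$ and $R_h\to 0$ on compact sets, formally giving the Fresnel-type integral $\int_0^\infty(e(w^k)-1)w^{-\alpha}\,dw$. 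The further change of variable $u=2\pi w^k$ reduces it to $\int_0^\infty(e^{iu}-1)u^{s-1}\,du$ with $s=(1-\alpha)/k\in(-1,0)$, which equals $\Gamma(s)e^{i\pi s/2}$ by the classical Mellin identity, yielding exactly $Ac_0^{(\alpha-1)/k}$. The error due to the cutoff is $O((c_0h)^{(k+1-\alpha)/k})$ using $|e(w^k)-1|\le 2\pi w^k$ near the origin; contributions of $R_h$ are of comparable or smaller size after integration by parts in the oscillatory tail. Multiplied by $q^{-1}\tau_0 h^{(\alpha-1)/k}$ with $|\tau_0|\le q$, the resulting error is $O(h)$, absorbed into $O(h^{\alpha/k}q^{1/2})$ since $\alpha<k$ forces $h^{1-\alpha/k}\le 1\le q^{1/2}$.

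For $m\ne 0$ the complete sum $\tau_m$ of the polynomial $aP(x)+mx\pmod q$ of degree $k\ge 2$ is bounded by Weil's theorem, $|\tau_m|\le(k-1)\sqrt{q}$, for prime $q$ with $(c_0,q)=1$ (the finitely many excluded primes are absorbed in the $O$-constant). Together with $|\widehat{g}_h(\xi)|\ll(1+|\xi|)^{-\delta}$ from Proposition~\ref{poisson}, this gives
\[
\sum_{m\ne 0}|\tau_m|\bigl|\widehat{g}_h(h^{-1/k}q^{-1}m)\bigr|\ll\sqrt{q}\sum_{m\ge 1}\bigl(1+h^{-1/k}q^{-1}m\bigr)^{-\delta}\ll\sqrt{q}\max(1,h^{1/k}q),
\]
(split at $m\asymp h^{1/k}q$ and use $\delta>1$), so after multiplication by $q^{-1}h^{(\alpha-1)/k}$ the error is $\ll q^{1/2}h^{\alpha/k}$ when $h^{1/k}q\ge 1$, and even smaller otherwise. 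The extensions are now routine. For $q$ square-free the Chinese Remainder Theorem factorises $\tau_m$ over primes dividing $q$, each factor satisfying Weil's bound, and $d(q)\ll q^\epsilon$ yields the extra $q^\epsilon$; at $\alpha=k$ the decay of $\widehat{g}_h$ becomes borderline and picks up the announced $|\log h|$. For $P(x)=c_0x^k$ (shifted Gauss-type sums) and $\deg P=2$ (classical quadratic Gauss sums), explicit evaluations of $\tau_m$ give the $\sqrt{q}$ bound for \emph{every} $q\ge 1$, removing the square-free hypothesis, and the quadratic case is sharp without any $\epsilon$ loss.

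The main obstacle is the uniform asymptotic analysis of $\widehat{g}_h(0)$ when $P$ is a genuine polynomial and not just the monomial $c_0x^k$: the lower-order terms of $hP$ generate spurious oscillations in the exponential that have to be treated separately in a bounded regime (Taylor expansion of $e(R_h)$) and in an unbounded oscillatory tail (stationary-phase or integration-by-parts bounds), and the combined correction has to be shown to remain of size $O(h^{(k+1-\alpha)/k})$, uniformly in the non-leading coefficients of $P$.
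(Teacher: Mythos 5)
Your overall plan — Poisson summation, isolate $m=0$ as the main term via the $\Gamma$-function identity, bound the rest by Weil — is the same as the paper's. The $m=0$ analysis you sketch is essentially the content of Lemma~\ref{integral}, and you correctly flag its delicacy at the end.

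The genuine gap is in the $m\ne 0$ estimate. You invoke $|\widehat{g}_h(\xi)|\ll(1+|\xi|)^{-\delta}$, $\delta>1$, \emph{uniformly in $h$}, attributing it to Proposition~\ref{poisson}. But the proof of that proposition only establishes $\widehat{g}_h(\xi)\ll_h|\xi|^{(1-\alpha-k/2)/(k-1)}$: the implied constant there is allowed to depend on $h$, since the only purpose at that stage is absolute convergence for fixed $h$. Your chain $\sum_{m\ne 0}|\tau_m||\widehat{g}_h(h^{-1/k}q^{-1}m)|\ll\sqrt{q}\max(1,h^{1/k}q)$, and hence the final error $O(q^{1/2}h^{\alpha/k})$, needs that uniformity, so it is not a consequence of what is already available. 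The paper's actual proof never extracts a one-shot uniform decay rate for $\widehat{g}_h$; instead it substitutes the dyadic partition $\Phi=\sum\phi(\cdot/2^j)$ from Lemma~\ref{d_s_s}, reducing to the pieces $g_N$, and proves bounds \eqref{ps_f_1} and \eqref{ps_f_2} with explicit and uniform dependence on both $N$ and $h$ by distinguishing the regimes $N^k h\le 1$ (where integration by parts after writing the difference as an integral in a new variable suffices) and $N^k h\ge 1$ (where one separates the stationary-phase range $|\xi|\asymp hN^{k-1}$ from the non-stationary ranges). Summing these dyadic bounds over $N$, using $k/2<\alpha<k$ to make the geometric sums converge at the correct end, yields $O(q^{1/2}h^{\alpha/k})$. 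That dyadic analysis is the technical heart of the proof and is what your proposal elides. A secondary slip: your written bound $\sqrt{q}\max(1,h^{1/k}q)$ is too weak when $h^{1/k}q<1$ — multiplied by $q^{-1}h^{(\alpha-1)/k}$ it gives $q^{-1/2}h^{(\alpha-1)/k}$, which is \emph{larger} than $q^{1/2}h^{\alpha/k}$ in that regime; one must retain the full power $(h^{1/k}q)^\delta$ with $\delta>1$ to recover the claimed error. Finally, for $P(x)=c_0x^k$ and general $q$, ``explicit evaluations give the $\sqrt q$ bound'' is not quite right: one uses Hua's estimate $\tau_m\ll d_{k-1}(q)q^{1/2}(m,q)^{1/2}$, so both the divisor factor and the $(m,q)^{1/2}$ factor must be tracked through the $m$-summation, and a $q^\epsilon$ loss remains (only the quadratic case is $\epsilon$-free).
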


As we mention in the introduction, this form of Poisson summation formula suffices to give
a short and simple proof of one of the main results in \cite{jaffard} (Corollary~2), with a slightly
more general function but a more restrictive range.

\begin{theorem}\label{jaffard}
If $P$ is a polynomial of degree $2$ then
\[ 
d_F\big(\beta+\frac{\alpha-1}{2}\big)=4\beta
\qquad\text{ for }\quad 0\le \beta\le \frac 14 \quad\text{ and }\quad 1<\alpha\le 2.
\]
On the other hand, $\beta_F(x)\le (\alpha-1)/2+1/4$ for any irrational $x$, in particular $d_F\big(\beta+\frac{\alpha-1}{2}\big)$ is zero or remains undefined for $\beta>1/4$.
\end{theorem}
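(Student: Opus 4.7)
The plan is to reduce to the irrationality exponent $\mu(x) = \sup\{s : |x-a/q| < q^{-s}$ infinitely often$\}$ by proving $\beta_F(x) = (\alpha-1)/2 + 1/(2\mu(x))$ for every irrational $x$, and then invoke the Jarn\'{\i}k--Besicovitch theorem. For quadratic $P$, Proposition~\ref{asymp} applies for every $q \ge 1$ with $\epsilon = 0$, and the classical identity $|\tau_0| = q^{1/2}$ for quadratic Gauss sums simplifies the expansion to
\[ F(a/q+h) - F(a/q) = c(a,q)\, h^{(\alpha-1)/2} + O\bigl(h^{\alpha/2} q^{1/2}\bigr), \qquad |c(a,q)| \asymp q^{-1/2}. \]

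\emph{Upper bound on $\beta_F(x)$.} Fix $s < \mu(x)$ and choose an infinite sequence of fractions $a/q$ with $h_0 := x - a/q \in (0, q^{-s}]$ (WLOG positive, using the conjugation remark after Proposition~\ref{poisson}). The identity
\[ F(x+h_0) - F(x) = [F(a/q+2h_0)-F(a/q)] - [F(a/q+h_0)-F(a/q)] \]
combined with the displayed asymptotic produces a main term $(2^{(\alpha-1)/2}-1)\, c(a,q)\, h_0^{(\alpha-1)/2}$, nonzero since $\alpha > 1$, of size $h_0^{(\alpha-1)/2} q^{-1/2} \gg h_0^{(\alpha-1)/2 + 1/(2s)}$ (using $q \le h_0^{-1/s}$). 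It dominates the error $O(h_0^{\alpha/2}q^{1/2})$ because $h_0 q^2 \le q^{2-s}$ is small (strict for $s > 2$; the strict Dirichlet inequality handles $s = 2$). Hence $\beta_F(x) \le (\alpha-1)/2 + 1/(2s)$, and letting $s \nearrow \mu(x)$ gives the upper bound. In particular $s = 2$ (valid for every irrational) yields the second assertion of the theorem.

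\emph{Lower bound on $\beta_F(x)$.} For $h > 0$ small, let $n$ be the smallest index with $q_n q_{n+1} \ge 1/h$, where $p_n/q_n$ are the convergents of $x$; then $\eta := x - p_n/q_n$ satisfies $|\eta| \le 1/(q_n q_{n+1}) \le h$ and $q_{n-1} q_n < 1/h$. Applying the asymptotic at $a/q = p_n/q_n$ to the shifts $\eta+h$ and $\eta$ and subtracting gives
\[ F(x+h) - F(x) = c(p_n,q_n)\bigl[(h+\eta)^{(\alpha-1)/2} - \eta^{(\alpha-1)/2}\bigr] + O\bigl(h^{\alpha/2} q_n^{1/2}\bigr). \]
Subadditivity of $t^{(\alpha-1)/2}$ (since $(\alpha-1)/2 \in (0,1/2]$) bounds the bracket by $h^{(\alpha-1)/2}$. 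For $\mu := \mu(x) < \infty$ and small $\epsilon > 0$, the eventual inequalities $q_{n+1} \le q_n^{\mu-1+\epsilon}$ and $q_n \le q_{n-1}^{\mu-1+\epsilon}$, combined with $q_n q_{n+1} \ge 1/h$ and $q_{n-1} q_n < 1/h$, pin $q_n$ in the range $[h^{-1/(\mu+\epsilon)},\, h^{-(\mu-1+\epsilon)/\mu}]$. The first bound shows the main term is $\ll h^{(\alpha-1)/2 + 1/(2(\mu+\epsilon))}$; the second shows the error is $\ll h^{(\alpha-1)/2 + 1/(2\mu) - \epsilon/(2\mu)}$. Both tend to $h^{(\alpha-1)/2 + 1/(2\mu)}$ as $\epsilon \to 0$, so $\beta_F(x) \ge (\alpha-1)/2 + 1/(2\mu(x))$. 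The Jarn\'{\i}k--Besicovitch theorem $\dim_H\{x : \mu(x) = r\} = 2/r$ for $r \ge 2$, specialized to $r = 1/(2\beta)$, delivers $d_F(\beta + (\alpha-1)/2) = 4\beta$ on $[0,1/4]$ (the Liouville set $\mu = \infty$ has dimension $0$ and is treated separately at $\beta = 0$).

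The principal subtlety lies in the lower bound on $\beta_F$: a naive choice of $a/q$ leaves either the main or the error term too large to reach the optimal exponent for $\mu > 2$. The correct choice requires \emph{two-sided} control of $q_n$---the lower bound on $q_n$ uses $q_{n+1} \le q_n^{\mu-1+\epsilon}$, while the upper bound uses the minimality of $n$ together with the analogous growth of $q_n/q_{n-1}$. Without both, the method delivers only the trivial bound $\beta_F(x) \ge (\alpha-1)/2 + 1/4$.
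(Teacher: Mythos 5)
Your overall architecture---compute $\beta_F$ pointwise in terms of the irrationality exponent $\mu(x)$, then invoke Jarn\'{\i}k--Besicovitch---is a cleaner high-level formulation than what the paper does, but it is blocked by a genuine gap: the asserted ``classical identity $|\tau_0|=q^{1/2}$ for quadratic Gauss sums'' is false. For $q\equiv 2\pmod 4$ the complete quadratic Gauss sum \emph{vanishes} (for instance $\sum_{n=1}^{q}e(an^2/q)=0$ when $q\equiv 2\pmod 4$ and $\gcd(a,q)=1$); the parity case analysis for generalized Gauss sums $\sum_n e((An^2+Bn)/q)$ gives further cases of vanishing depending on the parity of $B$ and of $q$ mod $4$. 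Your upper bound on $\beta_F(x)$ rests entirely on extracting a main term $(2^{(\alpha-1)/2}-1)\,c(a,q)\,h_0^{(\alpha-1)/2}$ and arguing it is bounded below; when $\tau_0=0$ that main term is zero and the argument collapses to the one-sided estimate $|F(x+h_0)-F(x)|=O(h_0^{\alpha/2}q^{1/2})$, which gives no lower bound on the oscillation at all. Consequently the pointwise claim $\beta_F(x)=(\alpha-1)/2+1/(2\mu(x))$ for \emph{every} irrational $x$ is not established (and, if all convergents realizing the $\limsup$ defining $\mu(x)$ happen to have even denominator with vanishing $\tau_0$, your argument yields nothing).

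The paper avoids ever proving a pointwise identity. It works with the sets $A_r^*$---those $x$ admitting a subsequence of convergents $q_{n_k}$ that are \emph{odd} and for which $r_{n_k}\to r$---using the elementary fact that consecutive convergent denominators are coprime (so at least one of each adjacent pair is odd, making $A_r^*$ nonempty and in fact of full dimension $2/r$ by a variant of Jarn\'{\i}k's theorem). The lower bound on $d_F$ is then obtained by computing $\beta_F$ exactly on $A_r^*$, while the upper bound on $d_F$ uses only the inequality \eqref{jaf_u}, which needs no lower bound on $\tau_0$ and hence no parity condition. If you want to rescue your proposal along the lines you wrote, you must (a) restrict the sequence of fractions $a/q$ in your upper-bound step to odd $q$, verifying $|\tau_0|=\sqrt{q}$ there via completing the square, and (b) replace the appeal to the full Jarn\'{\i}k set $\{x:\mu(x)=r\}$ with the dimension of its ``odd-denominator'' subset, which is exactly the paper's $A_r^*$ device. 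As it stands, your lower bound on $\beta_F$ and your upper bound on $d_F$ (via inclusion in a Jarn\'{\i}k set of dimension $2/r$) are sound and essentially parallel to \eqref{jaf_u} in the paper, but the matching upper bound on $\beta_F$ is not proved.
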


\medskip

We treat the second problem using Weyl's inequality \cite{Vau} to obtain the following bound. The
constant $c_0$ could be omitted in the statement and the bound still holds true but it is natural
for a direct application of  Weyl's inequality.

\begin{proposition}\label{weyl}
Given $r>2$ and  $x_0\not\in\Q$  such that the number of irreducible fractions $a/q$
satisfying $|c_0x_0-a/q|<q^{-r}$ is finite. Then for $1<\alpha\le k+1/2$
\[
\beta_F(x_0)\ge
\frac{\alpha-1}{k}
+2^{1-k}
\min\big(\frac{1}{k},\frac{1}{2(r-1)}\big).
\]
\end{proposition}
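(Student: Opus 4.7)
The plan is to bound $|F(x_0+h) - F(x_0)|$ for small $h > 0$ by combining a Weyl-type estimate for the partial sums $S_N := \sum_{n \le N} e(P(n)x_0)$ with Abel summation on the Fourier series defining $F$. First, given $N \ge 1$, I apply Dirichlet's approximation theorem with parameter $Q = N^{k/2}$ to $c_0 x_0$, producing an irreducible $a/q$ with $1 \le q \le N^{k/2}$ and $|c_0 x_0 - a/q| \le (qN^{k/2})^{-1}$. The Diophantine hypothesis rules out all but finitely many solutions of $|c_0 x_0 - a/q| < q^{-r}$, so for $N$ large the Dirichlet denominator satisfies $q^{r-1} \ge N^{k/2}$, i.e.\ $q \ge N^{k/(2(r-1))}$. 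Weyl's inequality \cite{Vau} then yields
\[
|S_N| \ll N^{1+\varepsilon}\bigl(q^{-1} + N^{-1} + qN^{-k}\bigr)^{2^{1-k}} \ll N^{1 - \eta + \varepsilon},
\]
with $\eta = 2^{1-k}\min\bigl(1, k/(2(r-1))\bigr)$; the hypothesis $k \ge 2$ ensures that the $qN^{-k}$ and $N^{-1}$ terms never force a worse exponent, which after division by $k$ gives the announced improvement $\eta/k = 2^{1-k}\min(1/k, 1/(2(r-1)))$.

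Next I write $F(x_0+h) - F(x_0) = \sum_{n \ge 1} e(P(n)x_0)\,c_n$ with $c_n = (e(P(n)h) - 1)/n^\alpha$ and apply Abel summation to obtain $\sum_n S_n(c_n - c_{n+1})$, splitting at the natural scale $N_0 = \lfloor h^{-1/k}\rfloor$ where $P(n)h$ transitions from $O(1)$ to unbounded. For $n \le N_0$ the Taylor estimate $|c_n - c_{n+1}| \ll h n^{k - \alpha - 1}$ combines with the Weyl bound to give
\[
\sum_{n \le N_0} |S_n|\,|c_n - c_{n+1}| \ll h \sum_{n \le N_0} n^{k - \alpha - \eta + \varepsilon} \ll h^{(\alpha - 1 + \eta - \varepsilon)/k}.
\]
For the tail $n > N_0$ I decompose as
\[
\sum_{n > N_0} \frac{e(P(n)(x_0+h))}{n^\alpha} - \sum_{n > N_0} \frac{e(P(n)x_0)}{n^\alpha};
\]
the second tail is bounded by Abel summation together with the Weyl estimate by $\ll N_0^{1 - \alpha - \eta + \varepsilon} = h^{(\alpha - 1 + \eta - \varepsilon)/k}$, while the first is handled analogously after observing that the Dirichlet approximation $a/q$ used for $c_0 x_0$ remains an admissible Weyl approximation for $c_0(x_0+h)$ as long as $|c_0 h| \le 1/q^2$, a condition compatible with $q \le N^{k/2}$ after a dyadic splitting of the sum that matches the scale of $N$ to that of $h$.

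Combining the two contributions and letting $\varepsilon \to 0$ produces $|F(x_0+h) - F(x_0)| \ll h^{(\alpha-1+\eta)/k - \varepsilon'}$ for every $\varepsilon' > 0$. Since the range $1 < \alpha \le k + 1/2$ keeps the exponent $(\alpha - 1 + \eta)/k$ below (or equal to) $1$, no polynomial correction enters the definition of $\beta_F(x_0)$, and the announced lower bound
\[
\beta_F(x_0) \ge \frac{\alpha - 1}{k} + 2^{1-k}\min\Bigl(\frac{1}{k}, \frac{1}{2(r-1)}\Bigr)
\]
follows. The principal technical obstacle is the uniform control of the Weyl cancellation for $c_0(x_0+h)$ in the tail $n > N_0$: the Diophantine hypothesis applies only to $x_0$, and the transfer to $x_0 + h$ must be carried out scale-by-scale because the compatibility condition $|c_0 h| \le q^{-2}$ restricts the admissible denominators. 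The dyadic-in-$N$ decomposition indicated above, applying Weyl's inequality with an $N$-adapted approximation on each scale, is what makes the argument go through.
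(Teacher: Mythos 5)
Your overall strategy---Weyl's inequality powered by a rational approximation whose denominator is bounded below via the Diophantine hypothesis, combined with a splitting of the series at the scale $N_0=\lfloor h^{-1/k}\rfloor$---is exactly the paper's, and most of the bookkeeping is correct: the derivation $q\ge N^{k/(2(r-1))}$ from Dirichlet plus the hypothesis, the bound $|c_n-c_{n+1}|\ll hn^{k-\alpha-1}$, the convergence of the resulting sums in the range $1<\alpha\le k+1/2$, and the identification of the exponent $2^{1-k}\min(1/k,1/(2(r-1)))$. The near range $n\le N_0$ and the tail at $x_0$ itself are fine, because there every Weyl sum carries the frequency $c_0x_0$ and your $N$-adapted Dirichlet approximation is legitimate.

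The gap is in the tail at $x_0+h$, precisely the point you flag as the principal obstacle but do not actually resolve. For a dyadic block at scale $N\gg N_0$, the Dirichlet approximation of $c_0x_0$ with $q\le N^{k/2}$ only guarantees $q^{-2}\ge N^{-k}$, while $|c_0h|\asymp N_0^{-k}\gg N^{-k}$; so the condition $|c_0h|\le q^{-2}$ that you need can fail badly, and an ``$N$-adapted approximation on each scale'' does not make the argument go through. What is needed is an $h$-adapted approximation: a single fraction $a/q$ with $q$ at most about $h^{-1/2}$ and $|c_0x_0-a/q|$ at most about $h$, so that $|c_0(x_0+h)-a/q|\le q^{-2}$ as well and the same $q$ serves every block $N\ge N_0$ (the terms $N^{-1}$ and $qN^{-k}$ in Weyl's inequality remain harmless because $N\ge h^{-1/k}$ and $q\le h^{-1/2}$, and the hypothesis still forces $q\gg h^{-1/(2(r-1))}$, losing nothing in the final exponent). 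This is exactly how the paper proceeds: it takes the convergents of $c_0x_0$ with $|c_0x_0-a_n/q_n|<h\le |c_0x_0-a_{n-1}/q_{n-1}|$ and distinguishes the cases $h\le\frac12 q_n^{-2}$ (use $q_n$, with $h^{-1/r}\le q_n\le (2h)^{-1/2}$) and $h>\frac12 q_n^{-2}$ (use $q_{n-1}$, with $h^{-1/(2(r-1))}\ll q_{n-1}\le h^{-1/2}$), obtaining a Weyl approximation valid for $c_0\xi$ uniformly in $\xi\in[x_0,x_0+h/c_0]$ and hence for all the sums at once. Your proof becomes complete once you replace the scale-$N^{k/2}$ Dirichlet box by this fixed, $h$-adapted choice (equivalently, Dirichlet at the single scale $\asymp h^{-1/2}$) throughout the range $n>N_0$.
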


\

We finish by stating an analogue of Proposition~\ref{asymp} in the case $\nu_F>1$ that works in a larger range of $h$ but just for a part of $F$.

\begin{proposition}\label{pointw}
Given $p>k>\alpha$, 
let $\nu_{F,p}$ the maximal multiplicity of a zero of $P'$ modulo $p$.
Assume
$\nu_{F,p}=\nu_F>1$. Then, there exists some $\gamma>0$ such that for all $1\le a\le q=p^{\nu_F+1}$
but at most $q^{1-\gamma}$ exceptions we have
\[
 F^*(\frac aq+h)-F^*(\frac aq)=\frac{\tau_*}p A {(c_0h)}^{(\alpha-1)/k}
+{ O\big(p^{-1}h^{(1+\gamma)(\alpha-1)/k}\big)}
\]
for any $h<p^{-2k}$, where $F^*$ is the sum in \eqref{deff} restricted to $\big\{n\;:\; p\mid P'(n)\big\}$ and
\[
\tau_*=\sum_{b\in \mathcal{B}} e(\frac{a P(b)}q)
\]
with $\mathcal{B}$ the set of zeros of multiplicity $\nu_F$ of $P'$ modulo $p$.
\end{proposition}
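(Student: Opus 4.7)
The approach is to extend the local analysis of Propositions \ref{poisson} and \ref{asymp} to the restricted sum $F^*$, by decomposing the index set $\{n : p\mid P'(n)\}$ into residue classes $n\equiv b\pmod p$ with $b$ a zero of $P'$ modulo $p$, and then applying Poisson summation to each inner $m$-sum (where $n=b+pm$). The new arithmetic ingredient is a careful Taylor expansion of $P(b+pm)$ modulo $q=p^{\nu_F+1}$. Using that $p>k$ and that $p\mid P^{(j)}(b)/j!$ for $1\le j\le\mu(b)$ (where $\mu(b)$ is the multiplicity of $b$ as a zero of $P'$ mod $p$), together with $p^j\ge q$ for $j\ge\nu_F+1$, one finds: for $b\in\mathcal{B}$ (so $\mu(b)=\nu_F$), $P(b+pm)\equiv P(b)+Q_b(m)\pmod q$ with $Q_b$ a polynomial in $m$ of degree at most $\nu_F-1$ whose $j$-th coefficient has $p$-valuation at least $j+1$; for $\mu(b)<\nu_F$ one gets additional terms of degree up to $\nu_F$, the most significant being $c_b p^{\mu(b)+1}m^{\mu(b)+1}$ with $c_b$ coprime to $p$. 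This $p$-adic reduction is the mechanism by which the effective modulus seen by the $m$-sum drops from $q$ to $p$, and is what should ultimately permit the enlarged range $h<p^{-2k}$.

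Next, I would apply Poisson summation to each inner sum after inserting the cutoff $\Phi$ from Proposition~\ref{poisson}. The $m$-sum becomes a sum over frequencies $M\in\mathbb{Z}$ of terms of the form $(h^{(\alpha-1)/k}/p)\,\widehat{g}_h\bigl((M-c_b(a))/(ph^{1/k})\bigr)$, where $c_b(a)\in\mathbb{Q}/\mathbb{Z}$ collects the $a$-dependence of the leading nontrivial coefficient of $Q_b(m)/q$. For $b\in\mathcal{B}$ the principal mode (corresponding to $M=c_b(a)$ when $c_b(a)\in\mathbb{Z}$, or to the closest nearby $M$ otherwise) produces the contribution $\frac{A}{p}(c_0h)^{(\alpha-1)/k}e(aP(b)/q)$ after invoking the integral analysis of Proposition~\ref{asymp}, and summing over $b\in\mathcal{B}$ yields exactly $\frac{\tau_*}{p}A(c_0h)^{(\alpha-1)/k}$. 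The hypothesis $h<p^{-2k}$ enters crucially here: it forces $1/(ph^{1/k})\ge p$, so the decay $\widehat{g}_h(\xi)=O((1+|\xi|)^{-\delta})$ with $\delta>1$ from Proposition~\ref{poisson} gives a saving of at least $(p^2h^{1/k})^{\delta}$ over the main term at every non-principal Poisson mode.

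Finally, I would control the residual error — coming both from the non-principal Poisson modes for $b\in\mathcal{B}$ and from the full contribution of $b$ of lower multiplicity — by averaging over $a\in\{1,\dots,q\}$. These residuals are (incomplete) polynomial exponential sums in $m$ whose complete versions modulo $p^{\nu_F-\mu(b)+1}$ admit Weyl- and Hua-type bounds: for a polynomial of degree $d\le\nu_F$ with leading coefficient coprime to $p$, the complete sum is $O(p^{\ell(1-1/d)})$ modulo $p^\ell$. Combining Parseval on the $a$-average with these exponential-sum bounds and with the decay of $\widehat{g}_h$ should produce an $L^2$-average total error strictly smaller than $q$ times the square of the target $p^{-1}h^{(1+\gamma)(\alpha-1)/k}$, after which Chebyshev identifies the exceptional set of size at most $q^{1-\gamma}$. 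I expect the main obstacle to be producing a \emph{uniform} $\gamma>0$: the bad $a$'s are those where either $c_b(a)$ is abnormally close to an integer, or where the complete exponential sums over $b\notin\mathcal{B}$ have degenerate leading coefficients mod $p$; checking that the union of these two sources of failure still forms a genuine power-saving subset of $\{1,\dots,q\}$, and that the resulting $\gamma$ is independent of $p$ in the full range $h<p^{-2k}$, is the delicate technical point.
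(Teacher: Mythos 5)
Your high-level decomposition is the same as the paper's: split $F^*$ over residue classes $n\equiv b\pmod p$ with $b$ a zero of $P'$ modulo $p$, and treat $b\in\mathcal{B}$ (multiplicity exactly $\nu_F$) separately from $b$ of lower multiplicity. The serious gap is in your Taylor analysis for $b\in\mathcal{B}$, and it is not a technicality but the crux of the proposition. You only invoke $p\mid P^{(j)}(b)/j!$ for $1\le j\le\nu_F$, which gives a nontrivial polynomial $Q_b$ of degree $\le\nu_F-1$ with the $j$-th coefficient of $Q_b(m)/q$ having $p$-valuation $\ge j-\nu_F$; note this is \emph{most negative at $j=1$}, so the ``leading nontrivial coefficient'' you propose to feed into Poisson as a single linear frequency shift $c_b(a)$ is not the dominant term, and in any case the phase is a genuine polynomial of degree up to $\nu_F-1$, not a linear shift, so the proposed Poisson step does not apply in the form you describe. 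What actually makes the $\mathcal{B}$-contribution tractable is the much stronger divisibility $p^{\nu_F-j+1}\mid P^{(j)}(b)/j!$ for $1\le j\le\nu_F$, which forces $Q_b\equiv 0\pmod q$, i.e.\ the exponential $e\big(aP(b+pn)/q\big)$ is \emph{literally constant in $n$}. This stronger statement is precisely where the hypothesis $\nu_{F,p}=\nu_F$ enters: it identifies $b$ with the reduction of a complex zero of $P'$ of multiplicity $\nu_F$, and the vanishing of $P^{(j)}$ to order $\nu_F-j+1$ at that zero descends $p$-adically. Without this input, no amount of Poisson bookkeeping closes the argument, since the phase really does oscillate.

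Once the constancy is in hand, the $\mathcal{B}$-part needs no Poisson at all: the paper applies Euler--Maclaurin to the now purely smooth $n$-sum and invokes Lemma~\ref{integral} to extract $\frac{A}{p}(c_0h)^{(\alpha-1)/k}e(aP(b)/q)$; the constraint $h<p^{-2k}$ (equivalently $p^{-1}>h^{1/(2k)}$) is then what keeps the Euler--Maclaurin error below $p^{-1}h^{(1+\gamma)(\alpha-1)/k}$, not the Poisson decay you cite. For the lower-multiplicity residues your $L^2$-and-Chebyshev plan with Weyl/Hua bounds is a genuinely different route from the paper, which instead uses a quantitative Weyl equidistribution criterion (Proposition~4.3 of \cite{gretao}): for each $a$ either the incomplete sum in $n$ exhibits cancellation or $a$ satisfies an explicit rational-approximation condition that only a power-saving set of $a$ can satisfy. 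Your plan could plausibly be made to work, and you are right that the delicate point is the uniformity of $\gamma$ and the passage from complete to incomplete sums, but as stated it is only a sketch; the pointwise dichotomy the paper uses avoids completing sums altogether and makes counting the bad $a$'s immediate.
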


In fact the exceptions $a/q$ in the previous result are essentially described in its proof.

\medskip

For the proof of Proposition~\ref{asymp} we state separately a well-known smoothing device and the
asymptotic expansion of a certain integral related to the main term.

\begin{lemma}[Dyadic smooth subdivision]\label{d_s_s}
There exists $\phi\in C_0^\infty(\R)$ with $\supp\phi\subset [1/2,2]$ such that
$\Phi(x)=\sum_{j=0}^\infty\phi(x/2^j)$ verifies $\Phi(x)=1$ for $x\ge 1$. 
\end{lemma}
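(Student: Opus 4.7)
The plan is to construct $\phi$ by normalizing an auxiliary bump so that its dyadic dilates form a partition of unity on the positive axis, and then to notice that for $x\ge 1$ only the $j\ge 0$ terms are active.

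First, I would choose any nonnegative $\psi\in C_0^\infty(\R)$ with $\supp\psi\subset[1/2,2]$ and $\psi>0$ on the open interval $(1/2,2)$; a standard mollified indicator does the job. For each $x>0$ only the (at most two) scales $j$ with $x/2^j\in[1/2,2]$ can contribute to
\[
S(x)=\sum_{j\in\Z}\psi(x/2^j),
\]
so $S$ is a locally finite sum, smooth and strictly positive on $(0,\infty)$, and manifestly $2$-dilation invariant: $S(2x)=S(x)$.

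Next I would set $\phi(x)=\psi(x)/S(x)$ for $x>0$ and $\phi\equiv 0$ elsewhere. Since $\psi$ vanishes to infinite order at the endpoints $1/2$ and $2$ while $S$ is smooth and strictly positive on any compact subset of $(0,\infty)$, one checks that $\phi\in C_0^\infty(\R)$ with $\supp\phi\subset[1/2,2]$. The partition identity then follows from $S(x/2^j)=S(x)$:
\[
\sum_{j\in\Z}\phi(x/2^j)=\frac{1}{S(x)}\sum_{j\in\Z}\psi(x/2^j)=1\qquad(x>0).
\]
Finally, for $x\ge 1$ and any $j\le -1$ we have $x/2^j\ge 2x\ge 2$, and since $\phi(2)=0$ and $\supp\phi\subset[1/2,2]$, the entire $j<0$ tail vanishes. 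Therefore $\Phi(x)=\sum_{j\ge 0}\phi(x/2^j)$ coincides with the full sum over $\Z$ and equals $1$ for every $x\ge 1$.

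There is no substantial obstacle; the only mild subtlety is arranging $\psi$ to vanish at the endpoints of $[1/2,2]$, which simultaneously guarantees smoothness of the quotient $\phi=\psi/S$ and ensures that the $j<0$ tail is genuinely zero at the boundary value $x=1$ (where $j=-1$ lands exactly at $x/2^j=2$).
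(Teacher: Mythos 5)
Your construction is correct and reaches the same conclusion by a different, though equally standard, route. You normalize an arbitrary bump $\psi$ supported in $[1/2,2]$ by the dilation-invariant sum $S(x)=\sum_{j\in\Z}\psi(x/2^j)$ — the classical Littlewood--Paley partition of unity — and then observe that for $x\ge1$ the $j<0$ terms die because $x/2^j\ge2$ and $\phi(2)=\psi(2)/S(2)=0$. The paper instead builds $\phi$ explicitly as a smooth ramp, $\phi=f/f(1)$ on $[1/2,1]$ and $\phi(x)=1-f(x/2)/f(1)$ on $[1,2]$ with $f$ an integral of a Gaussian-type bump, and then invokes the telescoping identity $\phi(x)+\phi(x/2)=1$ directly. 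The trade-off is minor: the paper's version hands you an explicit formula and makes the two-term telescoping structure transparent, which is convenient when verifying $\Phi\equiv1$ on $[1,\infty)$ by hand; your version is more canonical, avoids tracking the ramp's endpoints, and automatically yields the full bilateral partition $\sum_{j\in\Z}\phi(x/2^j)=1$ on $(0,\infty)$, from which the one-sided statement follows by truncation. Both are rigorous; your \lq\lq one checks'' on smoothness of $\phi=\psi/S$ is justified since $S$ is a locally finite sum, smooth and bounded away from $0$ on compacta of $(0,\infty)$.
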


\begin{proof}
Take $f(x)=\int_{1/2}^x e^{-(2u-1)^{-2}(u-1)^{-2}}\; du$ and choose for instance,
$\phi(x)=f(x)/f(1)$ in $[1/2,1]$, $\phi(x)=1-f(x/2)/f(1)$ in $[1,2]$ and $\phi(x)=0$ in the rest of
the points. 
Note that $\phi\in C_0^\infty(\R)$ and satisfies  $\phi(x)=1-\phi(x/2)$.
\end{proof}

\begin{lemma}\label{integral}
Let $P\in\R[X]$ be monic of degree $k$ and $1<\alpha<k$. For $0<h<1$, we have
\[
 \int_1^{\infty} \frac{e\big(hP(u)\big)-1}{u^{\alpha}}\; du
=Ah^{(\alpha-1)/k}+O\big(h^{\alpha/k}\big),
\]
with $A$ the constant in the statement of Proposition~\ref{asymp},
and the result extends to $\alpha=k$ substituting the error term by $O\big(h|\log h|\big)$.
\end{lemma}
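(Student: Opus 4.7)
My strategy reduces the problem to the pure monomial case $P(u)=u^{k}$ by the change of variable $\xi=P(u)^{1/k}$. Since $P$ is monic of degree $k$, I choose $u^{\ast}\geq 1$ large enough that $P$ and $P'$ are both positive on $[u^{\ast},\infty)$; then $u\mapsto\xi$ is a smooth increasing bijection of $[u^{\ast},\infty)$ onto $[\xi^{\ast},\infty)$ with $\xi^{\ast}=P(u^{\ast})^{1/k}$. The bounded remainder $\int_{1}^{u^{\ast}}\frac{e(hP(u))-1}{u^{\alpha}}\,du$ is trivially $O(h)$ from $|e(hP(u))-1|\leq 2\pi h|P(u)|$ and the compactness of $[1,u^{\ast}]$.

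After the substitution the tail becomes
\[
\int_{\xi^{\ast}}^{\infty}\bigl(e(h\xi^{k})-1\bigr)\,\phi(\xi)\,d\xi,\qquad
\phi(\xi)=\frac{k\xi^{k-1}}{u(\xi)^{\alpha}\,P'(u(\xi))}.
\]
Inverting the asymptotic $\xi=u+c_{k-1}/k+O(1/u)$ gives $u(\xi)=\xi-c_{k-1}/k+O(1/\xi)$, and a routine term-by-term expansion of $u(\xi)^{-\alpha}$ and $P'(u(\xi))$ yields $\phi(\xi)=\xi^{-\alpha}+O(\xi^{-\alpha-1})$ as $\xi\to\infty$. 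The leading integral $\int_{\xi^{\ast}}^{\infty}(e(h\xi^{k})-1)\xi^{-\alpha}\,d\xi$ equals $Ah^{(\alpha-1)/k}+O(h)$: the missing piece on $[0,\xi^{\ast}]$ is $O\bigl(h\int_{0}^{\xi^{\ast}}\xi^{k-\alpha}\,d\xi\bigr)=O(h)$ using $|e(h\xi^{k})-1|\leq 2\pi h\xi^{k}$, while the full integral over $[0,\infty)$ evaluates to $Ah^{(\alpha-1)/k}$ after substituting $t=2\pi h\xi^{k}$ and invoking the classical identity $\int_{0}^{\infty}(e^{it}-1)t^{s-1}\,dt=\Gamma(s)e^{i\pi s/2}$ for $-1<\operatorname{Re}(s)<0$ with $s=(1-\alpha)/k$, which recovers the stated formula for $A$ exactly.

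The remaining error integral $\int_{\xi^{\ast}}^{\infty}(e(h\xi^{k})-1)\cdot O(\xi^{-\alpha-1})\,d\xi$ is where the exponent $h^{\alpha/k}$ is produced. Splitting at $\xi=h^{-1/k}$, the bound $|e(h\xi^{k})-1|\leq 2\pi h\xi^{k}$ on the lower range yields $O\bigl(h\int^{h^{-1/k}}\xi^{k-\alpha-1}\,d\xi\bigr)=O(h^{\alpha/k})$ when $\alpha<k$, and the trivial bound $|e(h\xi^{k})-1|\leq 2$ on $[h^{-1/k},\infty)$ gives $O\bigl((h^{-1/k})^{-\alpha}\bigr)=O(h^{\alpha/k})$. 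Summing the three contributions delivers the claimed expansion. For the borderline $\alpha=k$, only the lower range of the error integral changes: $\int_{\xi^{\ast}}^{h^{-1/k}}h\xi^{-1}\,d\xi=O(h|\log h|)$, giving the announced logarithmic extension. The chief bookkeeping obstacle is the asymptotic $\phi(\xi)=\xi^{-\alpha}+O(\xi^{-\alpha-1})$, which is elementary but requires careful inversion of a formal series in $1/u$ and combination of the resulting $1/\xi$ corrections.
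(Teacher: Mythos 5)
Your argument is correct, and it differs genuinely from the paper's proof in how the lower-order terms of $P$ are handled. The paper decomposes $e(hP(u))-1 = (e(hu^k)-1) + e(hu^k)\big(e(hQ(u))-1\big)$ with $Q = P - u^k$; the first piece produces the Gamma-function main term, and the second piece is shown to be $O(h^{\alpha/k})$ via a smooth partition at $u \asymp h^{-1/k}$ together with an oscillatory-integral argument (integration by parts against the phase $hu^k + \xi Q(u)$). You instead substitute $\xi = P(u)^{1/k}$ so that the phase becomes exactly the monomial $h\xi^k$, and you push the lower-order terms of $P$ entirely into the Jacobian $\phi(\xi) = \xi^{-\alpha} + O(\xi^{-\alpha-1})$; the $\xi^{-\alpha}$ part reproduces the same Gamma-function computation, while the $O(\xi^{-\alpha-1})$ correction contributes $O(h^{\alpha/k})$ by a straightforward split at $\xi = h^{-1/k}$ using only the elementary bounds $|e(h\xi^k)-1|\le\min(2,2\pi h\xi^k)$, with no need for stationary phase or integration by parts. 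Your route is more elementary and arguably cleaner for this lemma (the extra $\xi^{-1}$ decay from the Jacobian does the work that oscillatory cancellation does in the paper), at the cost of the series-inversion bookkeeping you flag; the paper's decomposition is more in line with the stationary-phase machinery it already deploys in the surrounding proofs. Both deliver the constant $A$ and the error exponents $\alpha/k$ (and $h|\log h|$ at $\alpha=k$) in full agreement.
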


\begin{proof}
The identity $\Gamma(z)=\int_0^\infty
t^{z-1}\big(e^{-t}-1\big)\; dt$, valid for $-1<\Re(z)<0$, follows from the integral representation
of the $\Gamma$-function by partial integration. 
Applying residue theorem,
\[
 Ah^{(\alpha-1)/k}=
\frac{h^{(\alpha-1)/k}}{k}
\int_0^\infty t^{(1-\alpha-k)/k}\big(e(t)-1)\;dt
=
\int_0^\infty u^{-\alpha}\big(e(hu^k)-1)\;du.
\]
For $0<u<1$ the last integral contributes $O(h)$. Hence it is enough to prove
\[
I= O\big(h^{\alpha/k}\big)
\quad\text{for }
 I= 
\int_1^{\infty} 
u^{-\alpha}e\big(hu^k\big)\Big(e\big(hQ(u)\big)-1\Big)\; du
\]
where $Q(u)=P(u)-u^k$. Note that $\deg Q<k$.

Let $\phi\in C_0^\infty(\R)$ with $\supp \phi=[1,2]$ and  $\int \phi=1$ and
let $\Phi_+(x)=\int_0^x\phi$ and $\Phi_-(x)=1-\Phi_+(x)$. 
We write $I=I_{-}+I_{+}$ where
\[
 I_{\pm}= 
\int_1^{\infty} 
\Phi_{\pm}(h^{1/k}u)u^{-\alpha}e\big(hu^k\big)\Big(e\big(hQ(u)\big)-1\Big)\; du.
\]
The bound $\big|e\big(hQ(u)\big)-1\big|\ll hu^{k-1}$ gives $I_{-}=O\big(h^{\alpha/k}\big)$ if
$\alpha<k$ and $I_{-}=O(h|\log h|)$ if $\alpha=k$.

On the other hand, 
\begin{eqnarray*}
I_+&=&
2\pi i \int_0^h\int 
\Phi_+(h^{1/k}u)u^{-\alpha}
Q(u)
e\big(hu^k+\xi Q(u)\big)\; du\;d\xi
\\
&=&
2\pi i h^{-1+\alpha/k}
\int_0^h\int 
\Phi_+(u)u^{-\alpha}
\frac{Q(h^{-1/k}u)}{h^{(1-k)/k}}
e\big(u^k+\xi Q(h^{-1/k} u)\big)\; du\;d\xi.
\end{eqnarray*}
The inner integral is $O(1)$ integrating by parts, because the derivative of the phase is bounded
from below \cite{grahkole}, and we get
$I_{+}=O\big(h^{\alpha/k}\big)$.
\end{proof}

\begin{proof}[Proof of Proposition~\ref{poisson}]
Note that we can write 
\[
 F\big(\frac aq+h\big)-F\big(\frac aq\big)
=\sum_{j=1}^q
e\big(\frac{aP(j)}{q}\big)
\sum_{n\equiv j\ (q)}\Phi(n)\frac{e\big(hP(n)\big)-1}{n^\alpha}. 
\]
Poisson summation formula for arithmetic progressions \cite[\S4.3]{IwKo} implies, assuming the
absolute convergence
\[
 F\big(\frac aq+h\big)-F\big(\frac aq\big)
=q^{-1}
\sum_{m=-\infty}^\infty
\tau_m\widehat{f}(m/q)
\]
where
\[
 \widehat{f}(\xi)= \int \Phi(x)\frac{e\big(hP(x)\big)-1}{x^\alpha}e(-\xi x)\; dx.
\]
Changing variables $x\mapsto h^{-1/k} x$, we get the expected formula with
$\widehat{g}_h(\xi)=h^{(1-\alpha)/k}\widehat{f}\big(h^{1/k}\xi\big)$.

\smallskip

Finally note that for large $\xi$ the integral defining $\widehat{g}_h$ has a single stationary
point at $x\sim (k^{-1}\xi)^{1/(k-1)}$ if $\xi>0$. Then the principle of stationary phase
\cite{sogge}, \cite{erdelyi} or simpler considerations \cite[\S3.2]{grahkole} prove
$\widehat{g}_h(\xi)\ll_h\xi^{(1-\alpha-k/2)/(k-1)}$
and a better bound for $\xi<0$. Under our assumption $\alpha>k/2$, we have
$\widehat{g}_h(\xi)=O\big(|\xi|^{-1- \epsilon}\big)$ that assures the absolute convergence after the
trivial
bound $|\tau_m|\le q$.
\end{proof}

\begin{proof}[Proof of Proposition~\ref{asymp}]
Take $\Phi$ as in Lemma~\ref{d_s_s}, then we can write 
Proposition~\ref{poisson} as
\[
 F\big(\frac aq+h\big)-F\big(\frac aq\big)
=q^{-1}\sum_{N=2^j}^\infty
\sum_{m=-\infty}^\infty
\tau_m\widehat{f}_N(m/q)
\]
where
\[
 f_N(x)= \phi\big(\frac{x}{N}\big)\frac{e\big(hP(x)\big)-1}{x^\alpha}.
\]

The contribution of $m=0$ in the sum is
\[
 \frac{\tau_0}{q}
 \int_0^{\infty}\Phi(x) \frac{e\big(hP(x)\big)-1}{x^{\alpha}}\; dx
= \frac{\tau_0}{q}
\int_1^{\infty} \frac{e\big(hP(x)\big)-1}{x^{\alpha}}\; dx+O(h)
\]
that gives the main term by Lemma~\ref{integral}.

Weil's bound \cite{schmidt} proves $\tau_m\ll q^{1/2}$. Hence after
a change of variables
\begin{equation}\label{ps_f}
 F\big(\frac aq+h\big)-F\big(\frac aq\big)-A\frac{\tau_0}{q}(c_0h)^{(\alpha-1)/k}
\ll q^{-1/2}\sum_{N=2^j}^\infty
N^{1-\alpha}\sum_{m\ne 0}
|g_N(m/q)|
\end{equation}
with 
\[
g_N(\xi)=\int x^{-\alpha}\phi(x)e(-\xi Nx)\big(e(hP(Nx))-1)\; dx. 
\]

If $N^kh\le 1$, we proceed as in the proof of Lemma~\ref{integral} writing
\[
g_N(\xi)=
2\pi i N^{k}\int_0^h\int  
x^{-\alpha}\phi(x)\frac{P(Nx)}{N^k}e\big(-\xi Nx+uP(Nx)\big)\; dx \; du.
\]
When $N|\xi|<1$ the trivial estimate $O\big(N^{k})$ for the inner integral is sharp.
Otherwise, we save $N^2|\xi|^2$ integrating by parts twice. Then
\begin{eqnarray}\label{ps_f_1}
\sum_{m\ne 0}
|g_N(m/q)|
&\ll& N^{k}h\Big(\sum_{m<q/N}1+\sum_{m\ge q/N}N^{-2}q^2m^{-2}\Big)
\\
&\ll & hqN^{k-1}\qquad\text{for }\quad N^kh\le 1.\notag
\end{eqnarray}
Note that for $\alpha=k$ this contributes $O\big(q^{1/2}h|\log h|\big)$ in the right hand side of
\eqref{ps_f}.

If $N^kh\ge 1$, we separate $g_N$ in two integrals
\[
g_N(\xi)=
\int x^{-\alpha}\phi(x)e(hP(Nx)-\xi Nx)\; dx
-
\int x^{-\alpha}\phi(x)e(-\xi Nx)\; dx
\]
In the range $C^{-1}hN^{k-1}\le |\xi|\le ChN^{k-1}$ with $C>0$ a large constant, stationary phase
method \cite{sogge} (or van der Corput estimate) proves that the first integral is
$O\big(h^{-1/2}N^{-k/2}\big)$. If $|\xi|<C^{-1}hN^{k-1}$, it is $O\big(h^{-1}N^{-k}\big)$
integrating by parts once. Finally for $|\xi|> ChN^{k-1}$ 
in the first integral, and in the whole range in the second integral, 
we proceed as before 
using the trivial estimate and double integration by parts to get
$O\big(\min(1,h^{2/k}|\xi|^{-2})\big)$. 

These bounds give
\begin{eqnarray}\label{ps_f_2}
\sum_{m\ne 0}
|g_N(m/q)|
&\ll& 
\sum_{m\ll qhN^{k-1}}h^{-1}N^{-k}
+
\sum_{m\asymp qhN^{k-1}}h^{-1/2}N^{-k/2}
\\
&\ & 
+\sum_{m\gg qhN^{k-1}}\min(1,h^{2/k}q^2m^{-2}) \notag
\\
&\ll & 
q\big(h^{1/2}N^{k/2-1}+h^{1/k}\big)\qquad\text{for }\quad N^kh\ge 1.\notag
\end{eqnarray}

Substituting \eqref{ps_f_1} and \eqref{ps_f_2} in \eqref{ps_f} we obtain the expected result.

\

If $q$ is square-free then the multiplicative properties of $\tau_m$ (e.g. Th.1 \cite{smith}) allow to  
replace $q$ by each of its prime factors and apply Weil's bound,
resulting $|\tau_m|\le K^{d(k)}q^{1/2}\ll q^{1/2+\epsilon}$ where $d(q)$ is the number of divisors of $q$.

In the case $P(x)=c_0x^k$, we have Hua's bound \cite{loxsmi} that reads $\tau_m\ll d_{k-1}(q)q^{1/2}(m,q)^{1/2}$ with $d_{k-1}$ the $k-1$ divisor function \cite{ivic}. The elementary bound $\sum_{m\asymp M}(m,q)\ll d(q)M$ shows that \eqref{ps_f_1} and \eqref{ps_f_2} are still true with $d(q)$ in the right hand side when introducing $(m,q)^{1/2}$ in the summation.

For the quadratic case, the $\epsilon$-free bound $\tau_m\ll q^{1/2}$ for Gaussian sums is elementary.
\end{proof}

\medskip

\begin{proof}[Proof of Theorem~\ref{jaffard}]
Let $\{a_n/q_n\}_{n=1}^\infty$ be the convergents of $x\not\in\Q$. We consider the sets
\[
A_r=\big\{ 
x\in [0,1]\setminus\Q\;:\;
\big|x-\frac{a_n}{q_n}\big|=\frac{1}{q_n^{r_n}}
\quad\text{ with }\limsup r_n=r
\big\}
\]
and $A_r^*$ defined in the same way but adding the condition $2\nmid q_{n_k}$ for some subsequence
with $\lim r_{n_k}=r$. 
Note that $|x-a_n/q_n|<q_n^{-2}$ implies $r_n>2$.
By simple variants of 
Jarn\'{\i}k's theorem \cite{Fal},
$
\dim_H A_r=
\dim_H A_r^*=
\dim_H \bigcup_{s\ge r}A_s=2/r
$
for $2\le r\le \infty$.

Given $x\in A_r$ and any small $h\ne 0$, there exists $n$ such that
\[
\frac{1}{q_n^{r_n}}
=
\big|x-\frac{a_n}{q_n}\big|
\le |h|<
\big|x-\frac{a_{n-1}}{q_{n-1}}\big|
=
\frac{1}{q_{n-1}^{r_{n-1}}}.
\]
Hence, as $(2q_nq_{n-1})^{-1}>q_n^{-r_{n-1}}$, we have
$|h|^{-1/r_n}\le q_n< |h|^{-1+1/r_{n-1}}$.

The evaluation of the Gauss sums (cf. \cite[\S3]{IwKo}) or more elementary arguments, prove that  $\tau_0\gg \sqrt{q}$ for $q$ odd. Then by Proposition~\ref{asymp} and the bounds on $q_n$, we have, assuming $\alpha>2$ and writing $s_n=\max(r_{n-1},r_n)$,
\begin{equation}\label{jaf_u}
F(x+h)-F(x)
=
F(x+h)-F\big(\frac{a_n}{q_n}\big)
-
\big(F(x)-F\big(\frac{a_n}{q_n}\big)\big)
\ll h^{(\alpha-1)/2+1/2s_n} 
\end{equation}
and for the special choice $h=h_n=x-a_n/q_n$, if $q_n$ is odd and $q_n^{r_n-2}$ is greater than a large enough fixed constant,
\begin{equation}\label{jaf_l}
F(x)-F(x-h_n)
=
F\big(\frac{a_n}{q_n}+h_n\big)
-
F\big(\frac{a_n}{q_n}\big)
\gg h^{(\alpha-1)/2+1/2r_{n}}. 
\end{equation}

Taking $x\in A_r^*$ and $n=n_k$ in \eqref{jaf_u} and \eqref{jaf_l} we deduce that  $\beta_F(x)$ is
exactly $(\alpha-1)/2+1/2r$ and
\[ 
d_F\big(\frac{\alpha-1}{2}+\frac{1}{2r}\big)\ge \dim_H A_r^*=\frac{2}{r}
\qquad
\text{for }\quad 2<r\le \infty.
\]
From \eqref{jaf_u} we have that any $x$ with H\"older exponent $(\alpha-1)/2+1/2r$ is contained in $\Q\cup\bigcup_{s\ge r}A_r$. Since this is a set of dimension $2/r$ we have 
\[ 
d_F\big(\frac{\alpha-1}{2}+\frac{1}{2r}\big)\le \frac{2}{r}
\]
This proves the result for $\alpha<2$ taking $\beta=1/2r$ except in the case $\beta =1/4$. 

\smallskip

Any
$x\in[0,1]\setminus\Q$ is in some $A_s$, $2\le s\le \infty$.
Take $h_n=\epsilon|x-a_n/q_n|$ with $\epsilon>0$ a small enough constant, then the last inequality in \eqref{jaf_l} still holds for $q_n$ odd even if 
$q_n^{r_n-2}$ is not large and we have 
\begin{eqnarray*}
\Big( F\big(\frac{a_n}{q_n}+h_n\big)
-
F\big(x\big)\Big) 
+\Big( F\big(x\big)
-
F\big(\frac{a_n}{q_n}\big)\Big) 
&=&
F\big(\frac{a_n}{q_n}+h_n\big)
-
F\big(\frac{a_n}{q_n}\big)
\\
&\gg& h^{(\alpha-1)/2+1/4}
\end{eqnarray*}
that proves $\beta_F(x)\le  (\alpha-1)/2+1/4$ for $x\in[0,1]\setminus\Q$ unconditionally because $2\mid q_n$ implies $2\nmid q_{n+1}$.

For the case $\beta=1/4$ it only remains to note that \eqref{jaf_u} implies $\beta_F(x)\ge  (\alpha-1)/2+1/4$ in $A_2$ which is a set of full dimension.

Finally, for $\alpha=2$ the same argument applies when $\beta<1/4$ introducing a harmless logarithmic factor in \eqref{jaf_u}, and the case $\beta =1/4$ is treated in the same way but choosing this time $h_n(\log h_n)^2=\epsilon|x-a_n/q_n|$.
\end{proof}

\begin{proof}[Proof of Proposition~\ref{weyl}]
Applying the mean value theorem to the real and imaginary parts of the series defining $F$ we have
\begin{equation}
\label{decompF}
F(x_0+\frac h{c_0})-F(x_0)
\ll 
h
\big|
\sum_{n\le h^{-1/k}} n^{k-\alpha} e\big(P(n)\xi\big)
\big|
+
\big|
\sum_{n> h^{-1/k}} n^{-\alpha} e\big(P(n)\xi\big)
\big|
\end{equation}
for some $\xi\in [x_0,x_0+h/c_0]$ where we have assumed $h>0$ (the other case is similar).

\smallskip

Recall 
that Weyl's inequality assures that if $Q$ is a polynomial of degree $k$  with leading coefficient
$\lambda$ such that $|\lambda-a/q|\le q^{-2}$ for an irreducible fraction $a/q$, then 
\cite[Lemma~2.4]{Vau}
\begin{equation}\label{weyl_ineq}
\sum_{n\le N}  e\big(Q(n)\big)
\ll_{Q,\epsilon}
\big(
Nq^{-2^{1-k}}+N^{1-2^{1-k}}+N^{1-k2^{1-k}}q^{2^{1-k}}
\big)N^{\epsilon}
\text{ for every }\epsilon>0.
\end{equation}

\smallskip

Consider two consecutive convergents of $c_0x_0$ such that 
\begin{equation}\label{consec}
\big|c_0x_0-\frac{a_{n}}{q_{n}}\big|
<h\le
\big|c_0x_0-\frac{a_{n-1}}{q_{n-1}}\big|;
\end{equation}
with $n$ sufficiently large.

By our assumption on $x_0$ and elementary properties of the continued fractions, we have
\begin{equation}\label{propert}
 q_n^{-r}\le h<q_{n-1}^{-2}
\qquad\text{and}\qquad
q_{n-1}^{-r}<q_n^{-1}q_{n-1}^{-1}.
\end{equation}

If $h\le \frac 12 q_n^{-2}$ then $|c_0\xi-a_n/q_n|<h<q_n^{-2}$ and we can take $q=q_n$ in Weyl's
inequality \eqref{weyl_ineq}. 
Noting that $h^{-1/r}\le q_n\le (2h)^{-1/2}$, we deduce  $\beta_f(x_0)\ge
(\alpha-1)/k+2^{1-k}\min(1/r,1/k)$ from \eqref{decompF} by partial summation.

If $h> \frac 12 q_n^{-2}$ then $q_{n-1}>q_{n}^{1/(r-1)}\gg h^{-1/2(r-1)}$. The inequalities
\[
\big|c_0\xi-\frac{a_{n-1}}{q_{n-1}}\big|
\le h+
\big|c_0x_0-\frac{a_{n-1}}{q_{n-1}}\big|
\le 
2\big|c_0x_0-\frac{a_{n-1}}{q_{n-1}}\big|
<
\frac{1}{q_{n-1}^{2}}
\]
prove that we can take $q=q_{n-1}$
in \eqref{weyl_ineq}. 
Noting $q_{n-1}\le h^{-1/2}$, it gives
$\beta_f(x_0)\ge
(\alpha-1)/k+2^{1-k}\min(1/2(r-1),1/k)$.
\end{proof}

\begin{proof}[Proof of Proposition~\ref{pointw}]
Let $\mathcal Z$ be the set of zeros of $P'$ modulo $p$. We have $\mathcal Z=\mathcal{B}\cup \mathcal{C}$ where $\mathcal{C}$ is
the set of zeros with multiplicity less than $\nu_F$. We write
\[
 F_z=
\sum_{\substack{n=1\\ n\equiv z\ (p)}}^{\infty}
e\Big(\frac{aP(n)}{q}\Big)
\frac{e\big(P(n)h\big)-1}{n^\alpha}.
\]
Then $F^*(a/q+h)-F^*(a/q)=\sum_{z\in\mathcal Z}F_z$ and the proposition follows if we prove for
some $\gamma>0$
\begin{equation}\label{pointwb}
F_z=p^{-1}A(c_0h)^{(\alpha-1)/k}e\Big(\frac{aP(z)}{q}\Big)
+O\big(p^{-1}h^{(1+\gamma)(\alpha-1)/k}\big)
\quad\text{when } z\in \mathcal{B}
\end{equation}
and for all $1\le a\le q$ but at most $q^{1-\gamma}$ exceptions
\begin{equation}\label{pointwc}
F_z=
O\big(p^{-1}h^{(1+\gamma)(\alpha-1)/k}\big)
\quad\text{when } z\in \mathcal{C}.
\end{equation}

\

If $z\in \mathcal{B}$,  we have 
$e\big(P(z+pn)/q\big)=e\big(P(z)/q\big)$ because $q=p^{\nu_F+1}$ and $z$ is a zero of $P'$ of
multiplicity $\nu_F$. Hence by Euler-Maclaurin summation formula in the form 
$\sum_{n=1}^\infty f(n)=\frac 12 f(1)+\int_1^\infty f(x)\; dx+
O\big(\int_1^\infty |f'(x)|\; dx\big)$ we have
\[
F_z
=
e\Big(\frac{aP(n)}{q}\Big)
\int_{1}^\infty
\frac{e\big(P(z+px)h\big)-1}{(z+px)^\alpha}\; dx
+O\big(h^{\alpha/k}\big)
\]
and Lemma~\ref{integral} gives \eqref{pointwb} after a change of variables for $\gamma$ small enough, since $p^{-1}>h^{1/2k}$.

\

If $z\in \mathcal{C}$, let $m$ be the multiplicity of $z$ in $P'$.  
Then $p^{m-j+1}$ divides $P^{(j)}(z)/j!$ for $0\le j\le m$ and 
the Taylor expansion of $P$ gives that $Q(n)=p^{-m-1}\big(P(z+pn)-P(z)\big)\in\Z[x]$ and
\[
F_z
=
e\Big(\frac{aP(z)}{q}\Big)
\sum_{n=1}^\infty
e\Big(\frac{aQ(n)}{p^{\nu_F-m}}\Big)
\frac{e\big(P(z+pn)h\big)-1}{(z+pn)^\alpha}.
\]
Note that outside the interval
$p^{-1}h^{-1/k+\gamma}<n<p^{-1}h^{-1/k-\gamma}$ 
the contribution of the sum is negligible to prove \eqref{pointwc}. In this interval we seek
cancellation caused by the oscillation of $e\big({aQ(n)}/{p^{\nu_F-m}}\big)$.

By Proposition~4.3 of \cite{gretao}, we have that there exists $K>0$ such that for any $0<\delta
<1/2$ we have
\begin{equation}\label{good_weyl}
\sum_{n\le N} e\Big(\frac{aQ(n)}{p^{\nu_F-m}}\Big)\ll \delta N \qquad \text{ for every } N\ge N_0
\end{equation}
unless
\begin{equation}\label{bad_weyl}
\Big\langle
\frac{aj P^{(m+1)}(z)}{p^{\nu_F-m}}
\Big\rangle
\ll \delta^{-K} N_0^{-m-1}
\qquad\text{for some }1\le j\le \delta^{-K}.
\end{equation}
This is a quantitative form of Weyl's criterion more explicit than usual. Note that for $g=Q$ in
Definition~2.7 of \cite{gretao}, $\alpha_{m+1}=P^{(m+1)}(z)\ne 0$.

\smallskip

Whenever we have (\ref{good_weyl}) for $\delta=h^{(k+1)\gamma}$ and $N_0=p^{-1}h^{-1/k+\gamma}$, summation by parts in the remaining range for $n$ gives
\[
 F_z\ll p^{-1} h^{(1+\gamma)(\alpha-1)/k} + \delta p^{-1}h^{(\alpha-1)/k} h^{-\gamma k} \ll  p^{-1} h^{(1+\gamma)(\alpha-1)/k}.
\]
It only remains to prove that (\ref{bad_weyl}) can be satisfied by very few values of $a$. For a fixed $j$, the number of $a$'s satisfying it is $O(p^{m}+\delta^{-K}N_0^{-m-1}q)$, which is bounded by $q O(p^{-2}+\delta^{-K}N_0^{-2})$. Thus, the contribution from all $j$ is at most $\delta^{-K} q O(p^{-2} +\delta^{-K}N_0^{-2})$ which for $\gamma$ small enough is bounded by $q^{1-\gamma}$, since $p^{-1}>h^{1/2k}$.
\end{proof}

\section{Average oscillation}

The current knowledge about exponential sums and diophantine approximation beyond the quadratic case,
only
allows to control the oscillation of~$F$ in very thin intervals around a given rational. 
For instance, recalling $\tau_0\ll q^{1/2}$, the main term in Proposition~\ref{asymp} does not
become apparent for $h>q^{-k}$, and if $h>q^{-k/2}$ the error term is worse than the trivial
estimate 
$O\big(h^{(\alpha-1)/k}\big)$, cf. \eqref{decompF}.
We
overcome this difficulty  stating results about the
oscillation near most of the rationals having a fixed prime power as denominator. 

Along this section, 
$p$ and
$\tilde{p}$ denote prime numbers not dividing the leading coefficient of $P$ and we write
\begin{equation}\label{q_qtilde} 
 q=\begin{cases}
    p&\text{ if }\nu_F=1
    \\
    p^{\nu_F+1}&\text{ if }\nu_F>1
   \end{cases}
\qquad\text{and} \qquad
 \tilde{q}=\begin{cases}
    \tilde{p}&\text{ if }\nu_F=1
    \\
    \tilde{p}^{\nu_F+1}&\text{ if }\nu_F>1
   \end{cases}
\end{equation}

We assume that the ranges of $\alpha$ and $k$ are as in
Theorem~\ref{mainth},
although the results of this section also apply in the wider range $k/2+1<\alpha<k+1/2$ except
Proposition~\ref{pointw}.

\smallskip

Firstly we show, roughly speaking,  that in some ranges the H\"older exponent is not less than
$(2\alpha-1)/2k$ in most of them.

\begin{proposition}\label{osc1}
For $0<H<1$, we have
\[
\frac 1q \sum_{a=1}^q
\sup_{H\le |h|<2H}
\Big|F_*\big(\frac aq+h\big)-F_*\big(\frac aq\big)\Big|^2\ll H^{(2\alpha-1)/k}+q^{-1} H^{(2\alpha-2)/k}
\]
where $F_*$ means the sum in \eqref{deff} restricted to $\{n\;:\; p\nmid P'(n)\}$ if $\nu_F>1$ and
$F_*=F$ if $\nu_F=1$.
\end{proposition}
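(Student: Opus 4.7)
My plan is to apply Poisson summation to expand $D(h) := F_*(a/q+h) - F_*(a/q)$ as a Fourier-like sum over $m \in \mathbb{Z}$, then square and exploit the orthogonality of additive characters modulo $q$. Writing $S_p = \{r \in \{1,\ldots,q\} : p \nmid P'(r)\}$ when $\nu_F > 1$ (and $S_p = \{1,\ldots,q\}$ when $\nu_F = 1$) and invoking Proposition~\ref{poisson} applied to the series defining $F_*$, decomposed by residues modulo $q$, one obtains
\[
D(h) = q^{-1} h^{(\alpha-1)/k} \sum_{m \in \mathbb{Z}} \tau_m^* \widehat{g}_h\big(h^{-1/k}m/q\big),
\]
with $\tau_m^* = \sum_{r \in S_p} e((aP(r) + mr)/q)$ and $|\widehat{g}_h(\xi)| \ll (1+|\xi|)^{-\delta}$ for some $\delta > 1$.

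Squaring and summing over $a=1,\ldots,q$ via the orthogonality relation $\sum_{a=1}^q e(ac/q) = q$ if $q \mid c$ and $0$ otherwise, one obtains
\[
\sum_{a=1}^q |D(h)|^2 = q^{-1} h^{2(\alpha-1)/k} \sum_{(r_1,r_2) \in T(q)} W(r_1)\overline{W(r_2)},
\]
where $T(q) = \{(r_1,r_2) \in S_p^2 : P(r_1) \equiv P(r_2) \pmod q\}$ and $W(r) := \sum_m \widehat{g}_h(h^{-1/k}m/q)\,e(mr/q)$. The arithmetic heart of the argument is that, for each $r_1 \in S_p$, the number of $r_2 \in S_p$ with $(r_1,r_2) \in T(q)$ is at most $k = \deg P$: when $\nu_F = 1$ this is immediate since $P(x) \equiv P(r_1) \pmod p$ has at most $k$ roots in $\mathbb{F}_p$, and when $\nu_F > 1$ it follows from Hensel's lemma applied in each residue class modulo $p$ thanks to the condition $p \nmid P'(r)$. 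Therefore the bilinear sum is bounded by $k \sum_{r=1}^q |W(r)|^2$ via $2|W(r_1)\overline{W(r_2)}| \le |W(r_1)|^2 + |W(r_2)|^2$. To estimate this, note that $W$ is $q$-periodic, so finite Parseval gives $\sum_{r=1}^q |W(r)|^2 = q \sum_{\rho=0}^{q-1} |\tilde V(\rho)|^2$ with $\tilde V(\rho) = \sum_{s \in \mathbb{Z}} \widehat{g}_h\big(h^{-1/k}(\rho+qs)/q\big)$. The decay of $\widehat{g}_h$ yields $|\tilde V(\rho)| \ll (1 + |\rho|/(qh^{1/k}))^{-\delta}$ for $|\rho| \le q/2$ (the $s=0$ term dominates), whence $\sum_\rho |\tilde V(\rho)|^2 \ll 1 + qh^{1/k}$ and $\sum_r |W(r)|^2 \ll q + q^2 h^{1/k}$. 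Combining yields the pointwise-in-$h$ estimate $\frac{1}{q}\sum_{a=1}^q |D(h)|^2 \ll h^{(2\alpha-1)/k} + q^{-1}h^{(2\alpha-2)/k}$ for every $h \in [H,2H]$.

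To pass from this pointwise bound to the supremum claimed in the proposition, I would use the Kolmogorov-type inequality
\[
\sup_{h \in [H,2H]} |D(h)|^2 \le \frac{1}{H}\int_H^{2H}|D(h)|^2\,dh + 2\int_H^{2H} |D(h)D'(h)|\,dh,
\]
obtained by integrating $|D(h)|^2 - |D(h_0)|^2 = 2\mathrm{Re}\int_{h_0}^h \overline{D(t)}D'(t)\,dt$ over $h_0 \in [H,2H]$. Averaging over $a$, applying Cauchy-Schwarz to the integrand $|DD'|$, and invoking the companion bound $\frac{1}{q}\sum_a |D'(h)|^2 \ll h^{-2}\bigl(h^{(2\alpha-1)/k} + q^{-1}h^{(2\alpha-2)/k}\bigr)$ closes the argument. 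This companion bound is proved by running the same Poisson/orthogonality scheme on the term-by-term derivative $D'(h) = 2\pi i \sum_{n} P(n) e(P(n)(a/q+h))/n^\alpha$, using a smooth cutoff for convergence; the extra factor $P(n)\sim n^k$ effectively shifts $\alpha$ to $\alpha-k$ in the analysis and produces the overall $h^{-2}$. The case $h < 0$ reduces to $h > 0$ via the complex-conjugation symmetry noted after Proposition~\ref{poisson}. I anticipate the main obstacle to be precisely this supremum upgrade, since extending the Poisson analysis to the formally divergent derivative requires care; by contrast, the orthogonality/Hensel step and the Parseval bound on $W$ are both clean once set up.
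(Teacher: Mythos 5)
Your Poisson-plus-orthogonality derivation of the pointwise-in-$h$ bound is sound and in fact a clean alternative to the paper's route: where you apply Proposition~\ref{poisson} globally and reduce to a Parseval computation for $W(r)$, the paper instead decomposes dyadically (Lemma~\ref{d_s_s}) and treats each block via Lemma~\ref{osc1L}; the arithmetic input (Hensel/Lagrange control of $P(r_1)\equiv P(r_2)\pmod q$ on $S_p$) is the same. However, the supremum upgrade is where your proof breaks, and this is not a minor technical issue --- it is precisely the obstacle the paper's Mellin device is built to overcome.

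The Kolmogorov-type inequality $\sup_{[H,2H]}|D|^2\le H^{-1}\int|D|^2+2\int|DD'|$ requires $D$ to be absolutely continuous on $[H,2H]$, and it is not: $D(h)=F_*(a/q+h)-F_*(a/q)$, and $F_*$ has formal derivative $\sum 2\pi iP(n)n^{-\alpha}e(P(n)x)$ with coefficient sizes $\asymp n^{k-\alpha}$, so $\sum n^{2(k-\alpha)}$ diverges throughout the range $\alpha<k+1/2$ and $F_*\notin W^{1,2}$ (indeed it is H\"older of exponent $<1$ almost everywhere, hence nowhere differentiable on a full-measure set). Your ``smooth cutoff'' remark does not repair this: if you truncate to $F_{*,N}$ so that $D_N$ is differentiable, the companion bound $\frac1q\sum_a|D_N'(h)|^2\ll h^{-2}(H^{(2\alpha-1)/k}+q^{-1}H^{(2\alpha-2)/k})$ is simply false uniformly in $N$, since $\frac1q\sum_a|D_N'(h)|^2\asymp\sum_{n\le N}|P(n)|^2n^{-2\alpha}\asymp N^{2(k-\alpha)+1}\to\infty$. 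The term ``$P(n)\sim n^k$ shifts $\alpha$ to $\alpha-k$'' pushes you out of the convergence range, so the derivative second moment blows up rather than acquiring the claimed $h^{-2}$ factor. The paper circumvents this entirely: in Lemma~\ref{osc1L} the $h$-dependent weight $\eta(P(n)/M)$ is written via Mellin inversion as $\frac1{2\pi}\int\widetilde\eta(t)(P(n)/M)^{it}\,dt$, so that $|S(a,M)|\le\frac1{2\pi}\int|\widetilde\eta(t)|\,\bigl|\sum_n c_n(P(n))^{it}e(aP(n)/q)\bigr|\,dt$ is $M$-independent, the supremum over $M$ (equivalently over $h\in[H,2H]$) is free, and the price is the $L^1$ norm $\|\widetilde\eta\|_1$, controlled by Lemma~\ref{etatilde}. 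You would need a maximal-inequality device of this kind --- not a Sobolev/Kolmogorov bound --- to close the argument; as written, the sup step fails.
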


No restrictions are needed with an extra averaging in $q$. Even the special form of $q$ (a prime
power) is irrelevant. 

\begin{proposition}\label{osc2}
For $Q^{-k}<H<1$, we have
\[
\frac{1}{Q^{2+\epsilon}}
\sum_{Q\le n< 2Q}
\sum_{a=1}^n
\sup_{H\le |h|<2H}
\Big|F\big(\frac an+h\big)-F\big(\frac an\big)\Big|^2\ll H^{(2\alpha-1)/k}
\]
for any $\epsilon>0$.
\end{proposition}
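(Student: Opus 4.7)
The plan is to prove the estimate in two stages: (i) an orthogonality-based bound on the second moment $\sum_{n,a}|g_{a,n}(h)-g_{a,n}(h')|^2$ valid for any $h,h'\in[H,2H]$ (taking $h'\to 0$ recovers a pointwise bound at $h$), and (ii) conversion of the resulting $L^2$ and Gagliardo bounds to $\sup_h|\cdot|^2$ via a scaled fractional Sobolev embedding. Here $g_{a,n}(h):=F(a/n+h)-F(a/n)$.

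For stage (i), write
\[
g_{a,n}(h)-g_{a,n}(h')=\sum_m m^{-\alpha}e(P(m)a/n)\,e(P(m)h')\bigl(e(P(m)(h-h'))-1\bigr).
\]
Expanding $|\cdot|^2$ and summing over $a\in[1,n]$ kills all terms with $n\nmid P(m_1)-P(m_2)$ (the extra factor $e(P(m)h')$ is independent of $a$ and of modulus $1$, so it does not interfere) and leaves a factor $n$ otherwise; a further sum over $n\in[Q,2Q]$ replaces this by a weight $T(m_1,m_2):=\sum_{n\sim Q,\,n\mid P(m_1)-P(m_2)}n$. Split the $(m_1,m_2)$-sum into a diagonal part where $P(m_1)=P(m_2)$ (so $T\ll Q^2$) and an off-diagonal part where the divisor bound gives $T\ll_\delta Q(m_1m_2)^\delta$. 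The standard split at $m\sim|h-h'|^{-1/k}$ yields diagonal contribution $\ll Q^2|h-h'|^{(2\alpha-1)/k}$, while the triangle inequality in the off-diagonal part yields $\ll Q\bigl(\sum_m m^{-\alpha+\delta}|e(P(m)(h-h'))-1|\bigr)^2\ll Q|h-h'|^{2(\alpha-1)/k-2\delta}$. Taking $\delta$ small enough relative to $\epsilon$ and using $H\ge Q^{-k}$, both contributions are $\ll Q^{2+\epsilon}|h-h'|^{(2\alpha-1)/k}$.

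For stage (ii), use the scaled fractional Sobolev embedding on $[H,2H]$: for any $s>1/2$,
\[
\|f\|_{L^\infty([H,2H])}^2\ll H^{-1}\int_H^{2H}|f|^2\,dh+H^{2s-1}\int_H^{2H}\!\!\int_H^{2H}\frac{|f(h)-f(h')|^2}{|h-h'|^{1+2s}}\,dh\,dh'.
\]
Apply this with $f=g_{a,n}$, sum over $a,n$, and insert the stage-(i) bound for both the pointwise $L^2$ integrand and the Gagliardo seminorm. Choose $s=(\alpha-1)/k-\eta$ with $\eta>0$ small, so $s>1/2$ holds precisely because of the hypothesis $\alpha>1+k/2$; the elementary computation $\int\!\!\int_{[H,2H]^2}|h-h'|^{1/k-1+2\eta}\,dh\,dh'\sim H^{1/k+2\eta+1}$ then makes the exponents telescope and yields the announced bound $\ll Q^{2+\epsilon}H^{(2\alpha-1)/k}$, with the leftover off-diagonal term $QH^{2(\alpha-1)/k-2\delta}$ being dominated by the target thanks to $H\ge Q^{-k}$ once $\delta\ll\epsilon/k$ and $\eta>\delta$.

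The principal obstacle is stage (ii). The function $g_{a,n}$ is only H\"older of exponent $(\alpha-1)/k<1$ and, since $\alpha<k$, the formal series for $g_{a,n}'$ does not converge absolutely, so classical $W^{1,2}\hookrightarrow L^\infty$ is unavailable and a naive covering/Lipschitz argument loses polynomial factors of $Q$. The fractional-order embedding works precisely because $(\alpha-1)/k>1/2$, which is why the lower bound on $\alpha$ in the statement is intrinsic to the method. A secondary subtlety is that one cannot reduce to prime or squarefree $n$ via Proposition~\ref{asymp}, since non-squarefree integers have positive density in $[Q,2Q]$; the divisor-bound step in stage (i) handles all $n\in[Q,2Q]$ uniformly without needing that reduction.
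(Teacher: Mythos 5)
Your proposal is correct (up to a harmless slip: the off-diagonal exponent should be $2(\alpha-1)/k-2\delta/k$ rather than $-2\delta$, which does not affect the conclusion), but it takes a genuinely different route from the paper. The paper invokes the second half of Lemma~\ref{osc1L}, which handles the supremum over $h$ via Mellin inversion (the kernel $\widetilde\eta$ converts the sup into an $L^1$ average) and handles composite $n$ via a Hensel-type count that produces the multiplicative weight $g(n)$ with $g(p^i)=kp^{i-1}$; the sum over $n\in[Q,2Q)$ is then dispatched by Rankin's trick, $\sum_{Q\le n<2Q}ng(n)\ll Q^{2+\epsilon}\sum_n g(n)n^{-1-\epsilon}$, whose Euler product converges. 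You instead open the square and exploit orthogonality over $a$ and $n$ simultaneously, controlling the off-diagonal weight $T(m_1,m_2)=\sum_{n\sim Q,\,n\mid P(m_1)-P(m_2)}n$ by the divisor bound rather than by multiplicativity, and you replace the Mellin-based maximal inequality by a scaled fractional Sobolev embedding $W^{s,2}([H,2H])\hookrightarrow L^\infty$ applied termwise in $(a,n)$ and then summed. The paper's route is more modular—Lemma~\ref{osc1L} is reused verbatim in Propositions~\ref{osc1} and~\ref{osc3}, where the restricted averaging over $A(\widetilde q^{\,r})$ would not mesh with your global orthogonality identity—whereas your route is self-contained, avoids the smoothing $\phi(P(n)/L)$ and dyadic subdivision altogether, and makes the role of the hypothesis $\alpha>1+k/2$ maximally transparent: it is exactly what lets you take $s=(\alpha-1)/k-\eta>1/2$ in the embedding, playing the same role as the constraint $\epsilon<(\alpha-k/2-1)/k$ at the end of the paper's proof of Proposition~\ref{osc1}.
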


As we mentioned in the introduction, for the proof of Theorem~\ref{mainth}, integration (mimicked
by global average results like Proposition~\ref{osc1} and Proposition~\ref{osc2}) is too coarse. We
need to perform some local averaging  related to diophantine approximation. Given a denominator
$\tilde{q}$, we are led to consider the fractions $a/q$ with a bigger denominator~$q$ that are very
close to a fraction of denominator $\tilde{q}$. We  state below some average results over this
thin set of fractions~$a/q$. 

For $\tilde{q}<q$ and $t\ge 1$, consider the set 
\[
 A(t)=\big\{1\le a<q\;:\; \langle a\tilde{q}/q\rangle<\tilde{q}t^{-1}\big\}
\]
where $\langle \cdot\rangle$ denotes the distance to the nearest integer. 
Note that $|A(t)|\asymp {q\tilde{q}}/t $ because $|A(t)|$ is counting solutions of
$|a/q-\tilde{a}/\tilde{q}|<t^{-1}$.

\begin{proposition}\label{osc3}
Given  $r>2k/(\nu_F+1)$,  for
$q^{-1/4}<H<\tilde{q}^{-r}$, we have
\[
\frac 1{|A(\tilde{q}^{r})|} \sum_{a\in A(\tilde{q}^{r})}
\sup_{H\le |h|<2H}
\Big|F\big(\frac aq+h\big)-F\big(\frac aq\big)\Big|^2\ll H^{2(\alpha-1)/k+2/(\nu_F+1)r}.
\]
\end{proposition}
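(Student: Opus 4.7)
The plan is to transfer the oscillation at the rational $a/q$ to oscillation at a close-by rational $\tilde a/\tilde q$ with the smaller denominator, where we have the pointwise asymptotics of Proposition~\ref{asymp} or Proposition~\ref{pointw}. Concretely, for each $a\in A(\tilde q^r)$ let $\tilde a$ be the unique integer mod $\tilde q$ with $|\eta_0|<\tilde q^{-r}$, where $\eta_0:=a/q-\tilde a/\tilde q$; setting $\eta_1:=\eta_0+h$, so that $|\eta_0|,|\eta_1|<2\tilde q^{-r}$, I would rewrite
\[
F(a/q+h)-F(a/q)=G(\eta_1)-G(\eta_0),\qquad G(\eta):=F(\tilde a/\tilde q+\eta)-F(\tilde a/\tilde q),
\]
shifting the analysis to $G$ near the simpler denominator $\tilde q$.

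Next I would apply the relevant local expansion of $G$. When $\nu_F=1$, Proposition~\ref{asymp} (for prime $\tilde q=\tilde p$) gives $G(\eta)=C_{\tilde a}\eta^{(\alpha-1)/k}+O(\eta^{\alpha/k}\tilde q^{1/2+\epsilon})$ with $C_{\tilde a}=Ac_0^{(\alpha-1)/k}\tau_0/\tilde q$. When $\nu_F>1$ I would split $F=F^*+F_*$ and apply Proposition~\ref{pointw} to get an analogous expansion of $G$ restricted to $F^*$, with $\tau_*/\tilde p$ replacing $\tau_0/\tilde q$, valid for all but at most $\tilde q^{1-\gamma}$ values of $\tilde a$; the remainder $F_*$ would be controlled via the mean-square bound of Proposition~\ref{osc1}. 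Taking the squared difference and applying the mean value theorem, in the relevant regime $2(\alpha-1)/k>1$, yields
\[
|\eta_1^{(\alpha-1)/k}-\eta_0^{(\alpha-1)/k}|^{2}\ll H^{2}\max(|\eta_0|,H)^{2(\alpha-1)/k-2}.
\]

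I would then regroup $\sum_{a\in A(\tilde q^r)}$ by $\tilde a$: each $\tilde a$ accounts for $\asymp q/\tilde q^{r}$ values of $a$, whose $\eta_0$'s are spaced by $1/q$ inside $(-\tilde q^{-r},\tilde q^{-r})$. Treating the inner sum as a Riemann sum for
\[
q\int_{|\eta_0|<\tilde q^{-r}}H^{2}\max(|\eta_0|,H)^{2(\alpha-1)/k-2}\,d\eta_0\ll qH^{2}\tilde q^{-r(2(\alpha-1)/k-1)}
\]
(the integrand is concentrated at the upper endpoint since its exponent exceeds $-1$), then multiplying by $|C_{\tilde a}|^{2}$, averaging via the second-moment identities $\sum_{\tilde a}|\tau_0|^{2}\asymp\tilde q^{2}$ or $\sum_{\tilde a}|\tau_*|^{2}\asymp\tilde q$, and finally dividing by $|A(\tilde q^{r})|\asymp q\tilde q^{1-r}$, should produce a bound of order $H^{2}p^{\nu_F-1}\tilde q^{2r-1-2r(\alpha-1)/k}$. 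Substituting $\tilde q\le H^{-1/r}$ and $\tilde q=p^{\nu_F+1}$ collapses this to precisely $H^{2(\alpha-1)/k+2/((\nu_F+1)r)}$, the desired exponent.

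The hard part will be the case $\nu_F>1$: interfacing the pointwise expansion from Proposition~\ref{pointw} for $F^*$ with the restricted average over $A(\tilde q^r)$, which is a factor $\tilde q^{r-1}$ sparser than the uniform average underlying Proposition~\ref{osc1} for $F_*$, and tracking how the factor $p^{\nu_F-1}$ coming from $\tau_*$ combines with $\tilde q=p^{\nu_F+1}$ so that the exponent degrades by $2/((\nu_F+1)r)$ rather than the $1/r$ one would get in the square-free case. The hypotheses $r>2k/(\nu_F+1)$ and $H>q^{-1/4}$ will enter precisely when absorbing the error contributions: the $\tilde q^{1/2+\epsilon}\eta^{\alpha/k}$ term from Proposition~\ref{asymp}, the $q^{-1}H^{2(\alpha-1)/k}$ tail from Proposition~\ref{osc1}, and the $\tilde q^{1-\gamma}$ exceptional set in Proposition~\ref{pointw}.
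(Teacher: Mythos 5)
Your route is genuinely different from the paper's: you shift the analysis to $G(\eta)=F(\tilde a/\tilde q+\eta)-F(\tilde a/\tilde q)$, invoke the pointwise asymptotics of Proposition~\ref{asymp} (resp.~\ref{pointw}), and average the coefficient $\tau_0$ (resp.\ $\tau_*$) by second moment; the paper instead keeps $F(a/q+h)-F(a/q)$ intact in the dyadic decomposition (retaining the small factor $e(hP(n))-1$) and runs the restricted average directly through the maximal inequality of Lemma~\ref{osc2L}, which opens the square, detects $a\in A(\tilde q^r)$ with a Fej\'er kernel, and counts collisions $P(n)\equiv P(m)\pmod{\tilde q}$, $|P(n)-P(m)|\le\tilde q^r$. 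You do recover the correct main-term exponent, which is a nontrivial consistency check, but the two arguments are structurally unrelated.

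There is, however, a genuine gap in the handling of the error terms, and it is not the kind of detail that can be absorbed by the hypotheses you list. Once you write $F(a/q+h)-F(a/q)=\bigl(E(\eta_1)-E(\eta_0)\bigr)+\text{(main term)}$ with $E(\eta)=G(\eta)-C_{\tilde a}\eta^{(\alpha-1)/k}$, the only control you have on $E$ from Propositions~\ref{asymp}/\ref{pointw} is a bound on $|E(\eta)|$ at a single point. Bounding $|E(\eta_1)-E(\eta_0)|$ by $|E(\eta_1)|+|E(\eta_0)|$ throws away the cancellation between two values of $E$ that are only $O(H)$ apart, and this loss is fatal. Concretely, for the bulk of $a\in A(\tilde q^r)$ one has $|\eta_0|\asymp\tilde q^{-r}$, so (say in the case $\nu_F=1$) the triangle inequality gives $|E(\eta_1)-E(\eta_0)|\ll\tilde q^{-r\alpha/k+1/2}$, a quantity independent of $H$. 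The target $H^{2(\alpha-1)/k+2/((\nu_F+1)r)}$, on the other hand, tends to $0$ as $H\downarrow q^{-1/4}$, and the proposition allows $q$ to be as large as one pleases relative to $\tilde q$: only $q>\tilde q^{4r}$ is forced. Taking, e.g., $q\asymp\tilde q^{100r}$ and $H\asymp q^{-1/4}$, the target is of size $\tilde q^{-25r(2(\alpha-1)/k+2/((\nu_F+1)r))}$, which for $k$ moderately large is smaller than the fixed error $\tilde q^{-r\alpha/k+1/2}$ by an arbitrarily large power of $\tilde q$. The same obstruction affects the $\nu_F>1$ case through the $O\bigl(p^{-1}h^{(1+\gamma)(\alpha-1)/k}\bigr)$ error of Proposition~\ref{pointw} and also affects the ``remainder $F_*$'' piece if you control it via the $\tilde q$-average of Proposition~\ref{osc1}: in every instance the cruder estimate sees only $\max(|\eta_0|,H)\asymp\tilde q^{-r}$ rather than the factor $H$ from the difference.

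To make this kind of transfer rigorous one would need a \emph{difference} (or derivative) version of the error estimate --- something like $|E(\eta_0+h)-E(\eta_0)|\ll H\,|\eta_0|^{\alpha/k-1}\tilde q^{1/2}$ --- which the paper never proves and which is not an immediate corollary of Proposition~\ref{poisson}. The paper sidesteps the issue entirely by never splitting off a main term: the dyadic decomposition in $L$ keeps the factor $e(hP(n))-1$, so the sup over $h$ is built into the smoothed Mellin weight $\eta_\lambda$ of Lemma~\ref{etatilde}, the scale $L>q^{k/(k+1)}$ is discarded trivially using precisely $H>q^{-1/4}$, and the restricted average over $a$ is executed in Lemma~\ref{osc2L} before any pointwise information is ever used. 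A secondary, but real, problem in your sketch is that Proposition~\ref{pointw} assumes $\nu_{F,\tilde p}=\nu_F>1$, a condition that fails for a positive density of primes $\tilde p$; Proposition~\ref{osc3} makes no such restriction, so your case $\nu_F>1$ would also need a separate treatment of those $\tilde p$.
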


\medskip

The conclusion of this analysis of the oscillation is a kind of Chebyshev's inequality with deep
details that is fundamental in the proof of Theorem~\ref{mainth}.

\begin{proposition}\label{cheby}
With the notation of Proposition~\ref{osc3}, consider for $\epsilon>0$
\[
C_{q,\widetilde{q}}(\epsilon)
=\bigg\{
a\in A(\tilde{q}^{r})
\;:\;
\sup_{\frac 14 q^{-r}\le |h|<2\widetilde{q}^{-r}}
\frac{\big|F(a/q+h)-F(a/q)\big|}{|h|^{(\alpha-1)/k+1/(\nu_F+1)r-\epsilon}}
>1
\bigg\}.
\]
Then 
$\big|C_{q,\widetilde{q}}(\epsilon)\big|\ll (\tilde q^r)^{-2\epsilon} |A(\tilde{q}^{r})|\log q$
uniformly for $q\ge \widetilde{q}^{2r(2+\epsilon^{-1})}$.
\end{proposition}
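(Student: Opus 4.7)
The plan is to dyadically decompose the $h$-range in the definition of $C_{q,\widetilde q}(\epsilon)$ and apply Chebyshev's inequality on each scale. Concretely, I would cover $[\tfrac14 q^{-r},2\widetilde q^{-r}]$ by $O(\log q)$ dyadic intervals $[H,2H]$; the logarithmic count is guaranteed by the hypothesis $q\ge\widetilde q^{2r(2+\epsilon^{-1})}$, which bounds the ratio of the endpoints by a power of~$q$. Since $|h|^{(\alpha-1)/k+1/(\nu_F+1)r-\epsilon}$ is monotone in $|h|$, any $a\in C_{q,\widetilde q}(\epsilon)$ must satisfy $\sup_{H\le|h|<2H}|F(a/q+h)-F(a/q)|>H^{(\alpha-1)/k+1/(\nu_F+1)r-\epsilon}$ for at least one such dyadic~$H$, so a union bound reduces the question to controlling the bad set on each dyadic piece separately.

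For those scales with $q^{-1/4}<H<\widetilde q^{-r}$, Proposition~\ref{osc3} applies and yields an $L^2$-mean of the dyadic supremum of size $H^{2(\alpha-1)/k+2/(\nu_F+1)r}$; Chebyshev then bounds the number of bad~$a$'s at that scale by
\[
\ll\frac{H^{2(\alpha-1)/k+2/(\nu_F+1)r}}{H^{2(\alpha-1)/k+2/(\nu_F+1)r-2\epsilon}}\,|A(\widetilde q^r)|=H^{2\epsilon}|A(\widetilde q^r)|\le(\widetilde q^r)^{-2\epsilon}|A(\widetilde q^r)|,
\]
using $H\le\widetilde q^{-r}$. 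Summing over the $O(\log q)$ such scales already produces the asserted bound. For the remaining dyadic range $\tfrac14 q^{-r}\le H\le q^{-1/4}$, Proposition~\ref{osc3} no longer applies, but here the scale is small enough that pointwise control suffices: when $\nu_F=1$ and $q=p$ is prime, Proposition~\ref{asymp} gives $|F(a/q+h)-F(a/q)|\ll q^{-1/2}|h|^{(\alpha-1)/k}+|h|^{\alpha/k}q^{1/2+\epsilon'}$, and since $H\le q^{-1/4}$ the main term is $\ll H^{(\alpha-1)/k+2}$, well below the threshold because $2>1/(\nu_F+1)r$; when $\nu_F>1$ one splits $F=F_*+F^*$ and applies Proposition~\ref{pointw} to $F^*$ (absorbing the $q^{1-\gamma}$ exceptional set using the hypothesis on~$q$) together with Proposition~\ref{osc1} plus Chebyshev to $F_*$.

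The main obstacle I foresee is the bookkeeping in this small-$H$ regime, especially when $\nu_F>1$: one must check simultaneously that the $h^{\alpha/k}q^{1/2+\epsilon'}$ error term from Proposition~\ref{asymp}, the $q^{1-\gamma}$ exceptional set from Proposition~\ref{pointw}, and the loss incurred by passing from the $A(\widetilde q^r)$-average to the full $a$-average in Proposition~\ref{osc1} all stay within the target budget $(\widetilde q^r)^{-2\epsilon}|A(\widetilde q^r)|\log q$. These reduce to a handful of elementary numerical inequalities whose simultaneous validity is precisely what the quantitative hypothesis $q\ge\widetilde q^{2r(2+\epsilon^{-1})}$ is engineered to ensure.
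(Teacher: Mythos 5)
Your treatment of the main range $q^{-1/4}<H<\widetilde q^{-r}$ is correct and is exactly the paper's argument: dyadic decomposition, Proposition~\ref{osc3}, Chebyshev, and the monotonicity $H^{2\epsilon}\le(\widetilde q^r)^{-2\epsilon}$. The gaps are all in your treatment of the small-$H$ range $\tfrac14 q^{-r}\le H\le q^{-1/4}$.

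\textbf{The $\nu_F=1$ case is broken.} You invoke Proposition~\ref{asymp} pointwise, control the ``main term'' $q^{-1/2}|h|^{(\alpha-1)/k}$, and silently drop the error term $|h|^{\alpha/k}q^{1/2+\epsilon'}$. That error term is not small here --- it is disastrous. At the top of the range, $h\asymp q^{-1/4}$, it is
\[
|h|^{\alpha/k}q^{1/2+\epsilon'}\asymp q^{-\alpha/4k}\cdot q^{1/2+\epsilon'}=q^{1/2+\epsilon'-\alpha/4k},
\]
and since $\alpha<k$ forces $\alpha/4k<1/4$, the exponent is positive: the error term \emph{grows} with $q$, while the threshold $|h|^{(\alpha-1)/k+1/2r-\epsilon}$ stays bounded (indeed tends to $0$). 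So Proposition~\ref{asymp} gives no information whatsoever in this range, consistent with the remark at the start of \S3 that the asymptotic from Poisson summation becomes worse than trivial once $h>q^{-k/2}$, and here $h$ can be as large as $q^{-1/4}$. What you actually need is the same device you already propose for $F_*$ when $\nu_F>1$: Proposition~\ref{osc1} (with $F_*=F$ when $\nu_F=1$) plus Chebyshev after dyadic subdivision, which gives $O\big((q^{1/4})^{-2\epsilon}q\log q\big)$; comparing this against $(\widetilde q^r)^{-2\epsilon}|A(\widetilde q^r)|\log q\asymp\widetilde q^{-2r\epsilon}q\widetilde q^{1-r}\log q$ is where the hypothesis $q\ge\widetilde q^{2r(2+\epsilon^{-1})}$ is genuinely used.

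\textbf{The $\nu_F>1$ case also has a flaw, in the $F^*$ half.} You suggest applying Proposition~\ref{pointw} to $F^*$, but Proposition~\ref{pointw} requires $h<p^{-2k}$, and here $h$ goes up to $q^{-1/4}=p^{-(\nu_F+1)/4}$; since $\nu_F+1\le\deg P'+1=k$, the exponent $(\nu_F+1)/4\le k/4$ is always strictly less than $2k$, so the hypothesis of Proposition~\ref{pointw} fails throughout the upper part of the small-$H$ range. The fix is much simpler and needs no exceptional-set bookkeeping: the sum defining $F^*$ runs only over $n$ in at most $\deg P'$ residue classes modulo $p$, so trivially $|F^*(a/q+h)-F^*(a/q)|\ll p^{-1}|h|^{(\alpha-1)/k}$, and with $p\asymp q^{1/(\nu_F+1)}$ and $|h|\ge\tfrac14 q^{-r}$ this is $\ll|h|^{(\alpha-1)/k+1/r(\nu_F+1)}$ uniformly, directly beating the threshold. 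Your $F_*$ half (Proposition~\ref{osc1} plus Chebyshev) is the right move; just use the trivial bound for $F^*$ rather than Proposition~\ref{pointw}.

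In short: keep your main-range argument; in the small-$H$ range, use Proposition~\ref{osc1} with Chebyshev for $F_*$ (for both values of $\nu_F$, noting $F_*=F$ when $\nu_F=1$), and the trivial modulus-$p$ bound for $F^*$ when $\nu_F>1$. Do not use Proposition~\ref{asymp} or Proposition~\ref{pointw} in this range.
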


\medskip

For the proofs we employ a kind of maximal theorems for smoothed Weyl sums.

\begin{lemma}\label{osc1L}
Consider the sum
\[
 S(a,M)=
\sideset{}{'}\sum_{n\asymp N}
c_n\eta\big(\frac{P(n)}{M}\big)e\big(\frac{aP(n)}{q}\big)
\]
where $\eta\in C_0^\infty(\R^+)$, $|c_n|\le 1$ and the summation is restricted to $\{n\;:\; p\nmid
P'(n)\}$ if $\nu_F>1$. Then
\[
\frac 1q \sum_{a=1}^q
\sup_{M\le M^*<2M}
 |S(a,M^*)|^2
\ll \|\widetilde{\eta}\|_1^2\big(N+\frac{N^2}{q}\big)
\]
where $\widetilde{\eta}(t)=\int \eta(x)x^{it-1}\; dx$.
The result still holds for unrestricted summation in $n$ and arbitrary $q$ (not necessarily a power
of prime) if the right hand side is multiplied by the multiplicative function $g(q)$ with
$g(p^i)=kp^{i-1}$.
\end{lemma}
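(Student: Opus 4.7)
The plan is to isolate the $M^*$-dependence via Mellin inversion and then reduce the problem, by Cauchy--Schwarz and orthogonality of additive characters modulo $q$, to counting solutions of a polynomial congruence.

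First, since $\eta\in C_0^\infty(\R^+)$, repeated integration by parts in the definition of $\widetilde\eta$ shows that $\widetilde\eta(t)$ decays faster than any polynomial, so $\|\widetilde\eta\|_1<\infty$ and the Mellin inversion formula $\eta(x)=(2\pi)^{-1}\int\widetilde\eta(t)\,x^{-it}\,dt$ is absolutely convergent. Substituting $x=P(n)/M^*$ separates variables:
\[
S(a,M^*)=\frac{1}{2\pi}\int\widetilde\eta(t)\,(M^*)^{it}\,T(a,t)\,dt,\qquad T(a,t)=\sideset{}{'}\sum_{n\asymp N}c_n\,P(n)^{-it}e\!\left(\frac{aP(n)}{q}\right),
\]
so the $M^*$-dependence survives only in the unimodular factor $(M^*)^{it}$. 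Hence $\sup_{M\le M^*<2M}|S(a,M^*)|\le(2\pi)^{-1}\int|\widetilde\eta(t)||T(a,t)|\,dt$, and Cauchy--Schwarz against the measure $|\widetilde\eta(t)|\,dt$ followed by Fubini reduces the statement to a uniform-in-$t$ bound
\[
\frac{1}{q}\sum_{a=1}^q|T(a,t)|^2\ll N+\frac{N^2}{q}.
\]

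Second, expanding the square and invoking orthogonality $\sum_{a=1}^q e(ak/q)=q\,\mathbf{1}_{q\mid k}$ collapses the sum to a pair-counting quantity,
\[
\frac{1}{q}\sum_{a=1}^q|T(a,t)|^2 \le \#\big\{(n_1,n_2)\asymp N\times N\text{ admissible}:q\mid P(n_1)-P(n_2)\big\},
\]
so I need to bound, for fixed admissible $n_1$, the number of admissible $n_2\asymp N$ satisfying the congruence. When $\nu_F=1$ and $q=p$ is prime, $P(x)\equiv P(n_1)\pmod p$ has at most $k$ residues mod $p$. When $\nu_F>1$ and $q=p^{\nu_F+1}$, the restriction $p\nmid P'(n_i)$ ensures via Hensel's lemma that each admissible root mod $p$ lifts uniquely to a root mod $q$, so again there are at most $k$ admissible residues. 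In either case, $n_2$ ranges over $\ll k(N/q+1)$ integers, yielding the total count $\ll k(N+N^2/q)$, which combined with the first step produces the $\|\widetilde\eta\|_1^2$ factor as claimed.

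For the unrestricted variant with arbitrary $q$, the same Mellin/Cauchy--Schwarz mechanism applies but the Hensel step fails where $p\mid P'$. Writing $q=\prod p_i^{e_i}$ and using CRT to factor the count, modulo $p^e$ the solutions of $P(x)\equiv c$ project onto at most $k$ residues mod $p$, each lifting to at most $p^{e-1}$ residues mod $p^e$, so the number of admissible residue classes is $\le kp^{e-1}=g(p^e)$; multiplicativity then yields the factor $g(q)$. The main obstacle is the Hensel-type lifting count in the restricted case: without the hypothesis $p\nmid P'$ the uniqueness of lifts is lost and an unwanted factor of $p^{\nu_F}$ sneaks in, which is precisely why the restriction on $n$ is imposed when $\nu_F>1$; the remainder of the argument is routine manipulation of the Mellin integral and CRT bookkeeping.
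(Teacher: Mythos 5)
Your proof is correct and follows essentially the same path as the paper: Mellin inversion to detach the $M^*$-dependence, Cauchy--Schwarz to dominate by a $\|\widetilde\eta\|_1^2$ factor times a diagonalized quantity, orthogonality of additive characters to reduce to counting solutions of $q\mid P(n_1)-P(n_2)$, and Lagrange plus Hensel (or its coarser $kp^{e-1}$ variant for the general case) to bound that count. The only cosmetic difference is that you apply Cauchy--Schwarz directly against the measure $|\widetilde\eta(t)|\,dt$ and then obtain a uniform-in-$t$ bound for $\frac{1}{q}\sum_a|T(a,t)|^2$, whereas the paper uses a self-improving step (multiplying through by $|S(a,M)|$, picking a $t_0$, and cancelling) before invoking the same pair-counting inequality; both produce the identical estimate.
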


\begin{proof}
By Mellin's inversion formula 
\begin{eqnarray*}
|S(a,M)|&=&
\frac{1}{2\pi}
\Big|
\int 
\widetilde{\eta}(t) 
\sideset{}{'}\sum_{n\asymp N}
c_n\big(\frac{P(n)}{M}\big)^{it}e\big(\frac{aP(n)}{q}\big)
dt
\Big|
\\
&\le &
\int 
|\widetilde{\eta}(t)| 
\Big|\sideset{}{'}\sum_{n\asymp N}
c_n\big(P(n)\big)^{it}e\big(\frac{aP(n)}{q}\big)\Big| 
dt.
\end{eqnarray*}
Note that this bound does not depend on $M$. We assume tacitly in the following that $M=M(a)$ is
the $M^*$ giving the supremum. Multiplying both sides by $|S(a,M)|$ and summing on $a$
\begin{equation}\label{sam2}
\sum_{a=1}^q
 |S(a,M)|^2
\le 
\frac{1}{2\pi}
\|\widetilde{\eta}\|_1
\sum_{a=1}^q
\Big|\sideset{}{'}\sum_{n\asymp N}
c_n\big(P(n)\big)^{it_0}e\big(\frac{aP(n)}{q}\big)\Big|
 |S(a,M)|
\end{equation}
for some $t_0\in \R$. 

Note that opening the square and changing the order of summation the inequality 
\begin{equation}\label{sam2aux}
 \sum_{a=1}^q
\Big|\sideset{}{'}\sum_{n\asymp N}
A_ne\big(\frac{aP(n)}{q}\big)\Big|^2
\ll (q+N)N
\end{equation}
holds for any $|A_n|\le 1$ because 
for each fixed $m$, the polynomial congruence $P(n)-P(m)\equiv 0\pmod{p^{\nu_F}}$ has at most $\deg
P$ solutions by Lagrange's Theorem and Hensel's Lemma.

Using \eqref{sam2aux} in \eqref{sam2} after Cauchy-Schwarz inequality, we get the result.
For the last claim in the statement, note that even if the hypotheses of Hensel's Lemma are not
fulfilled, the number of solutions of a polynomial equation $Q(n)\equiv 0\pmod{p^{i}}$ is at most
$p^{i-1}\deg Q$ (see a more detailed than usual statement of Hensel's Lemma in \cite[Ch.6]{moll}). 
\end{proof}

\begin{lemma}\label{osc2L}
With the notation of Lemma~\ref{osc1L} but with no restriction in the summation $S(a,M)$, 
and assuming $\tilde{q}<t<q/2$ and $N^k=o(q)$, we have
\[
\frac{1}{|A(t)|} \sum_{a\in A(t)}
\sup_{M\le M^*<2M}
 |S(a,M^*)|^2
\ll \|\widetilde{\eta}\|_1^2
\mathop{\sum\ \sum}_{
\substack{ n\in\mathcal C_N\ m\in\mathcal C_N\\ |P(n)-P(m)|\le t \\ P(n)\equiv P(m)\pmod{\tilde{q}}}
} 1
\]
where $\mathcal C_N=\{n\asymp N\;:\; c_n\ne 0\}$.
\end{lemma}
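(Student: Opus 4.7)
The plan is to mirror the Mellin--Cauchy--Schwarz argument from Lemma~\ref{osc1L}, replacing the full orthogonality of $e(a\cdot/q)$ on $\mathbb{Z}/q\mathbb{Z}$ by a Fourier-localized smooth majorant of $\chi_{A(t)}$. Exactly as there, Mellin inversion gives the $M^{*}$-independent bound
\[
|S(a,M^*)|\le \int |\widetilde{\eta}(\tau)|\,|T(a,\tau)|\,d\tau,
\qquad
T(a,\tau)=\sum_{n\asymp N}c_n P(n)^{i\tau}e\!\left(\frac{aP(n)}{q}\right),
\]
which survives the supremum over $M^{*}$. Multiplying by $|S(a,M^*)|$, summing over $a\in A(t)$, choosing a $\tau_0$ that realises the essential sup on the resulting integrand and applying Cauchy--Schwarz in $a$ reduces everything to the claim
\[
\sum_{a\in A(t)}|T(a,\tau_0)|^2\ll |A(t)|\cdot \#\bigl\{(n,m)\in \mathcal{C}_N^2 : \tilde{q}\mid P(n)-P(m),\ |P(n)-P(m)|\le t\bigr\}
\]
uniformly in $\tau_0\in\mathbb{R}$.

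To handle this reduced statement, fix a non-negative Schwartz function $\psi\ge \chi_{[-1,1]}$ whose Fourier transform is compactly supported in $[-1,1]$ (for instance the square of a rescaled Beurling--Selberg majorant) and set
\[
\phi(a)=\sum_{b\in\mathbb{Z}}\psi\!\left(\frac{t}{\tilde{q}}\Bigl(\frac{a\tilde{q}}{q}-b\Bigr)\right).
\]
Then $\phi$ is non-negative, $q$-periodic in the integer variable $a$, and satisfies $\phi\ge 1$ on $A(t)$. Poisson summation in $b$ computes the discrete Fourier transform
\[
\widehat{\phi}(k)=\sum_{a=0}^{q-1}\phi(a)\,e(-ak/q)=\frac{q\tilde{q}}{t}\sum_{\substack{j\in\mathbb{Z}\\ j\tilde{q}\equiv k\,(q)}}\widehat{\psi}(j\tilde{q}/t),
\]
so $|\widehat{\phi}(k)|\ll q\tilde{q}/t$, and the compact support of $\widehat{\psi}$ forces $\widehat{\phi}(k)=0$ unless $j\tilde{q}\equiv k\pmod q$ admits a solution with $|j\tilde{q}|\le t$.

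Expanding $|T(a,\tau_0)|^2$ and using $\phi\ge \chi_{A(t)}\ge 0$,
\[
\sum_{a\in A(t)}|T(a,\tau_0)|^2\le \sum_{a=0}^{q-1}\phi(a)\,|T(a,\tau_0)|^2=\sum_{n,m\in \mathcal{C}_N}c_n\overline{c_m}\,P(n)^{i\tau_0}P(m)^{-i\tau_0}\,\widehat{\phi}\bigl(P(m)-P(n)\bigr).
\]
Write $L=P(m)-P(n)$; the hypothesis $N^k=o(q)$ forces $|L|\ll N^k<q/2$, so $L$ coincides with its representative modulo $q$ in $(-q/2,q/2]$. Together with $|j\tilde{q}|\le t<q/2$, the congruence $j\tilde{q}\equiv L\pmod q$ then pins down $j\tilde{q}=L$ as an integer equation, i.e.\ $\tilde{q}\mid L$ and $|L|\le t$. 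Bounding $|c_n\overline{c_m}|\le 1$ and using $|A(t)|\asymp q\tilde{q}/t$ finishes the argument. The main technical point is this Fourier step: the majorant must be simultaneously non-negative and have compactly supported Fourier transform, so that the restriction to $a\in A(t)$ translates cleanly into the stated arithmetic conditions on the pairs $(n,m)$; and the hypothesis $N^k=o(q)$ is precisely what kills any wraparound modulo~$q$ that would otherwise contaminate this translation.
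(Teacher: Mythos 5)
Your proof is correct and follows essentially the same strategy as the paper: reduce by Mellin inversion and Cauchy--Schwarz to bounding $\sum_{a\in A(t)}|T(a,\tau_0)|^2$, then majorize $\chi_{A(t)}$ by a non-negative $q$-periodic function whose discrete Fourier transform is supported on the residues $j\tilde q\bmod q$ with $|j\tilde q|\le t$, and finally expand the square and count pairs $(n,m)$. The only cosmetic difference is the choice of majorant: the paper uses the Fej\'er kernel $K_L(a\tilde q/q)/L$ with $L=[t/\tilde q]$ (whose Fourier coefficients are the triangle weights supported in $|j|<L$), whereas you use a Beurling--Selberg type Schwartz function $\psi\ge\chi_{[-1,1]}$ with $\widehat\psi$ supported in $[-1,1]$; the paper's version is marginally more elementary and explicit, while yours has the small advantage of giving $\phi\ge 1$ on $A(t)$ exactly by construction rather than needing the pointwise lower bound $K_L\gg L$ near the peaks. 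In either case the hypotheses $t<q/2$ and $N^k=o(q)$ serve exactly as you describe, to rule out wraparound modulo $q$ when converting the congruence $j\tilde q\equiv P(m)-P(n)\pmod q$ into the integer conditions $\tilde q\mid P(n)-P(m)$ and $|P(n)-P(m)|\le t$.
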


\begin{proof}
The first steps in the proof of Lemma~\ref{osc1L} apply and we get \eqref{sam2} with the
new summation range for $a$, and Cauchy-Schwarz inequality gives 
\[
\sum_{a\in A(t)}
 |S(a,M)|^2
\ll
\|\widetilde{\eta}\|_1^2
\sum_{a\in A(t)}
\Big|\sum_{n\asymp N}
c_n\big(P(n)\big)^{it_0}e\big(\frac{aP(n)}{q}\big)\Big|^2.
\]

Let $K_L$ the Fej\'{e}r kernel $L^{-1}\big( \sin(\pi Lx)/\sin(\pi x)\big)^2$ for
$L=[t/\tilde{q}]$. Then
\begin{eqnarray*}
\sum_{a\in A(t)}
\Big|\sum_{n\asymp N}
\Big|^2 
&\ll& 
\frac 1L
\sum_{a=1}^qK_L\big(\frac{a\tilde{q}}{q}\big)
\Big|\sum_{n\asymp N}
\Big|^2
\\ 
&=&
\frac 1L
\sum_j
\big(1-\frac{|j|}{L}\big)_+
\sum_{a\in A(t)}e\big(
\frac{a\tilde{q}}{q}
j\big)
\Big|\sum_{n\asymp N}
\Big|^2.
\end{eqnarray*}
Opening the square and interchanging the order of summation, we get
\[
 \frac{q\tilde{q}}{t}
\sum_{n\in\mathcal C_N}
\sum_{m\in\mathcal C_N}
\#\big\{j\;:\; |j|<t/\tilde{q}, \ P(m)-P(n)+\tilde{q}j\equiv 0\pmod{q}\big\}.
\]
On our assumptions, $|P(m)-P(n)|<q/2$ and $t<q/2$, then the inner set contains at most an element,
given by $j=\big(P(n)-P(m)\big)/\tilde{q}$ which requires $\tilde{q}\mid  P(n)-P(m)$ and
$|P(n)-P(m)|\le t$. 
\end{proof}

For the application of these lemmas in the proof of the previous propositions we need to estimate
$\|\widetilde{\eta}\|_1$ for a special choice of $\eta$.

\begin{lemma}\label{etatilde}
For $\lambda>0$ and  $\phi$ as in Lemma~\ref{d_s_s}, consider
\[
 \eta_\lambda(x)=\phi\big(\frac{x}{\lambda}\big)\big(e(x)-1\big).
\]
Then $\|\widetilde{\eta}_\lambda\|_1\ll \min\big(\lambda, \lambda^{1/2}\big)$.
\end{lemma}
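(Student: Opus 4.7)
The plan is to interpret $\widetilde{\eta}_\lambda$ as a Fourier transform. Substituting $x=\lambda u$ and then $u=e^s$ yields
\[
\widetilde{\eta}_\lambda(t)=\lambda^{it}\int_{\R} G_\lambda(s)\,e^{its}\,ds,\qquad G_\lambda(s):=\phi(e^s)\bigl(e(\lambda e^s)-1\bigr),
\]
so $|\widetilde{\eta}_\lambda(t)|=|\widehat{G}_\lambda(t)|$, where $G_\lambda$ is smooth, supported in $[-\log 2,\log 2]$, and whose only rapid oscillation comes from the factor $e(\lambda e^s)$. The task reduces to bounding $\|\widehat{G}_\lambda\|_1$ by $\min(\lambda,\lambda^{1/2})$.

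For $\lambda\le 1$ this is nearly trivial: $|e(\lambda e^s)-1|\ll \lambda$ on $\supp\phi(e^s)$, and each differentiation of $G_\lambda$ only brings down a further factor of $\lambda\le 1$, so $G_\lambda$ and all its derivatives are uniformly $O(\lambda)$. Integrating by parts twice, $|\widehat{G}_\lambda(t)|\ll \lambda(1+t^2)^{-1}$, hence $\|\widehat{G}_\lambda\|_1\ll \lambda$.

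For $\lambda>1$ the argument is stationary phase. Splitting off the $-1$ in $e(\lambda e^s)-1$ produces the Fourier transform of a fixed Schwartz function, whose $L^1$ norm is $O(1)$. The remaining piece is $\int \phi(e^s)\,e^{i\psi(s)}\,ds$ with $\psi(s)=2\pi\lambda e^s+ts$. Since $\psi'(s)=2\pi\lambda e^s+t$, a stationary point in $[-\log 2,\log 2]$ exists precisely when $t$ lies in the critical band $[-4\pi\lambda,-\pi\lambda]$, of length $\asymp\lambda$; there $\psi''(s)\asymp\lambda$, so the classical stationary phase estimate yields the pointwise bound $|\widehat{G}_\lambda(t)|\ll\lambda^{-1/2}$, contributing $O(\lambda^{1/2})$ to the $L^1$ norm. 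Off this band $|\psi'(s)|\gg\max(|t|,\lambda)$, and repeated integration by parts produces rapid and integrable decay whose total $L^1$ mass is $O(1)$.

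The only delicate step is executing the stationary phase analysis cleanly on the critical band; everything else reduces to straightforward integration by parts. Combining both regimes and using $|\lambda^{it}|=1$ yields $\|\widetilde{\eta}_\lambda\|_1\ll\min(\lambda,\lambda^{1/2})$ as claimed.
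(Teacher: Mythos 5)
Your argument is essentially the paper's proof, presented in a slightly different coordinate system. The paper changes variables $x\mapsto\lambda x$ (which strips off the $|\lambda^{it}|=1$ factor exactly as you do) and then works directly with the oscillatory kernel $x^{it-1}$, i.e.\ with phase $2\pi\lambda x+t\log x$ on $[1/2,2]$; you add the further substitution $u=e^s$ to turn this into an honest Fourier transform with phase $2\pi\lambda e^s+ts$. Both treatments then bound the small-$\lambda$ regime by the crude estimate $|e(\lambda\cdot)-1|\ll\lambda$ plus two integrations by parts, and the large-$\lambda$ regime by the second-derivative test (van der Corput/stationary phase) on the critical band $|t|\asymp\lambda$ of length $\asymp\lambda$ together with integration by parts outside it; in both cases the band contributes $\lambda\cdot\lambda^{-1/2}=\lambda^{1/2}$ and the tails contribute $O(1)$. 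Two cosmetic remarks: (i) your phrase ``each differentiation only brings down a further factor of $\lambda$'' is slightly loose (the leading term in $G_\lambda^{(j)}$ is always $O(\lambda)$ for $\lambda\le1$, not $O(\lambda^j)$), though the conclusion you use is correct; and (ii) near the endpoints of the critical band your lower bound $|\psi'|\gg\max(|t|,\lambda)$ degenerates, so one should either widen the band by a constant factor (as the paper implicitly does with its constant $K$) or apply the first-derivative van der Corput estimate in a transition zone — a routine fix that does not affect the final $\lambda^{1/2}$.
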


\begin{proof}
After a change of variables
\[
 |\widetilde{\eta}_\lambda(t)|=
\Big|\int \phi(x)\big(e(\lambda x)-1\big)e\big(\frac{t\log x}{2\pi}\big)\; dx\Big|.
\]
By the mean value theorem $e(\lambda x)-1$ can be substituted by $2\pi i x e(\xi_tx)$, $0\le
\xi_t\le \lambda$, and if $0<\lambda \le 1$, integrating by parts twice
$|\widetilde{\eta}_\lambda(t)|\ll \lambda (1+|t|)^{-2}$ and $\|\widetilde{\eta}\|_1\ll
\lambda$.

If $K^{-1}\lambda<t<K\lambda$ for some large constant $K$, by the second derivative test
$|\widetilde{\eta}_\lambda(t)|\ll t^{-1/2}$ and outside this interval, integrating by parts twice
$|\widetilde{\eta}_\lambda(t)|\ll (1+|t|)^{-2}$. Then $\|\widetilde{\eta}\|_1\ll
\lambda^{1/2}$.
\end{proof}

\smallskip

\begin{proof}[Proof of Proposition~\ref{osc1}]
By Lemma~\ref{d_s_s}
\begin{equation}\label{subdya}
F_*\big(\frac aq +h\big)-F_*\big(\frac aq\big)
=
\sum_{L=2^j}
\sideset{}{'}\sum_{n}
\phi\big(\frac{P(n)}{L}\big)\frac{e(h P(n))-1}{n^{\alpha}}e\big(\frac{a P(n)}q\big).
\end{equation}
Then 
\begin{equation}\label{decs1s2}
\frac 1q \sum_{a=1}^q
\sup_{H\le |h|<2H}
\Big|F_*\big(\frac aq+h\big)-F_*\big(\frac aq\big)\Big|^2
\ll
\Sigma_1+\Sigma_2
\end{equation}
where $\Sigma_1$ and $\Sigma_2$ are the contributions coming from the terms $L\le H^{-1}$ and
$L>H^{-1}$ in \eqref{subdya}, respectively.

For each $L$ take
\[
 \eta(x)=K\phi\big(\frac{x}{|h|L}\big)\frac{e(\pm x)-1}{L^{\alpha/k}}
\]
with $K>0$ a large constant and $\pm$ the sign of $h$. With the notation of Lemma~\ref{osc1L},
taking $N\asymp L^{1/k}$ and $c_n=K^{-1}L^{\alpha/k}n^{-\alpha}<1$ we have
\begin{equation}\label{sah}
\sideset{}{'}\sum_{n}
 \phi\big(\frac{P(n)}{L}\big)\frac{e(h P(n))-1}{n^{\alpha}}e\big(\frac{a P(n)}q\big)
=
S(a,|h|^{-1}). 
\end{equation}

By Cauchy-Schwarz inequality, introducing a factor $(LH)^\epsilon\cdot (LH)^{-\epsilon}$, for
$\epsilon>0$
\[
 \Big|
\sum_{L=2^j\le H^{-1}}
\sideset{}{'}\sum_{n}
\Big|^2
\ll
\sum_{L=2^j\le H^{-1}}
(LH)^{-2\epsilon}
\Big|
\sideset{}{'}\sum_{n}
\Big|^2.
\]
Hence
\begin{equation}\label{sigma1} 
 \Sigma_1\ll 
\sum_{L=2^j\le H^{-1}}
(LH)^{-2\epsilon}
\frac 1q \sum_{a=1}^q
\sup_{H\le |h|<2H}
\big|S(a,|h|^{-1})\big|^2.
\end{equation}
Similarly
\begin{equation}\label{sigma2} 
 \Sigma_2\ll 
\sum_{L=2^j> H^{-1}}
(LH)^{2\epsilon}
\frac 1q \sum_{a=1}^q
\sup_{H\le |h|<2H}
\big|S(a,|h|^{-1})\big|^2.
\end{equation}

By Lemma~\ref{osc1L} and Lemma~\ref{etatilde} (note that
$\widetilde{\overline{\eta}}(\xi)=\overline{\widetilde{\eta}(-\xi)}$ and then the uncertainty of the
sign in the definition of $\eta$ is harmless), 
\[
 {\Sigma}_1\ll 
\sum_{L=2^j\le H^{-1}}
(LH)^{-2\epsilon+2} 
\cdot L^{-2\alpha/k}\big(L^{1/k}+L^{2/k}q^{-1}\big)\ll H^{(2\alpha-2)/{k}}(H^{1/k}+q^{-1})
\]
where we have implicitly chosen any $\epsilon<(k-\alpha+\frac 12)/k$. 

In the same way
\[
 {\Sigma_2}\ll 
\sum_{L=2^j> H^{-1}}
(LH)^{2\epsilon} 
HL\cdot L^{-2\alpha/k}\big(L^{1/k}+L^{2/k}q^{-1}\big)\ll H^{(2\alpha-2)/{k}}
(H^{1/k}+q^{-1})
\]
under the assumption $\epsilon<(\alpha-k/2-1)/k$.
\end{proof}

\begin{proof}[Proof of Proposition~\ref{osc2}]
According to the last part of Lemma~\ref{osc1L}, 
we can repeat word by word the proof of Proposition~\ref{osc1} to get 
\[
\frac 1n \sum_{a=1}^n
\sup_{H\le |h|<2H}
\Big|F\big(\frac an+h\big)-F\big(\frac an\big)\Big|^2\ll g(n) H^{(2\alpha-1)/k}.
\]
By \lq\lq Rankin's trick''
\[
\sum_{Q\le n<2Q} ng(n)
\ll Q^{2+\epsilon}\sum_{n=1}^\infty \frac{g(n)}{n^{1+\epsilon}}
\ll Q^{2+\epsilon}\prod_p
\Big(
1
+\frac{k}{p^{1+\epsilon}}
+\frac{k}{p^{1+2\epsilon}}
+\dots
\Big)
\]
and the infinite product converges.
\end{proof}

\begin{proof}[Proof of Proposition~\ref{osc3}]
We start performing the same dyadic subdivision as in \eqref{subdya} 
but in this case replacing $F_*$ by $\tilde{F}_*$ defined analogously but restricting the summation
to $\big\{n\;:\; \tilde{p}\nmid P'(n)\big\}$ when $\nu_F>1$.
Consequently the
sums $S\big(a,|h|^{-1}\big)$ in \eqref{sah} are affected by the same change. In this case we
distinguish three ranges
\[
\frac 1{|A(\tilde{q}^{r})|} \sum_{a\in A(\tilde{q}^{r})}
\sup_{H\le |h|<2H}
\Big|\tilde{F}_*\big(\frac aq+h\big)-\tilde{F}_*\big(\frac aq\big)\Big|^2\ll 
\Sigma_1+\Sigma_2+\Sigma_3
\]
where $\Sigma_1$, $\Sigma_2$ and $\Sigma_3$ correspond to the contributions coming from the terms
with $L\le H^{-1}$, $H^{-1}<L\le q^{k/(k+1)}$ and $L>q^{k/(k+1)}$, respectively. The trivial
estimate for $\Sigma_3$ is
\[
 \Sigma_3\ll 
\big(L^{1/k}\big)^{2(1-\alpha)}
\le q^{2(1-\alpha)/(k+1)}
\le H^{8(\alpha-1)/(k+1)}
\le H^{2(\alpha-1)/k+2/(\nu_F+1)r}.
\]

For $\Sigma_1$ and $\Sigma_2$ we are going to apply Lemma~\ref{osc2L} with $t=\tilde{q}^r$ and
$N\asymp L^{1/k}$. Note that the hypotheses in this lemma are fulfilled. 

Fixed $m$, the condition $P(n)\equiv P(m)\pmod{\tilde{q}}$ gives $O(1+N/\tilde{q})$
solutions for $n$ (by Lagrange's Theorem  and Hensel's Lemma). On the other hand, by the second
condition, any pair of valid values of $n$, say $n_1$ and $n_2$, satisfies $2t>|P(n_1)-P(n_2)|\gg
N^{k-1}|n_1-n_2|$ for $N$ large, then they are concentrated in an interval of length
$O\big(tN^{1-k}\big)$ and, as before, in a fixed number of residue classes determined by $P(n)\equiv P(m)\pmod{\tilde{q}}$.

Summing up, in our case Lemma~\ref{osc2L} reads
\[
\frac{1}{|A(\tilde{q}^r)|} \sum_{a\in A(\tilde{q}^r)}
\sup_{M\le M^*<2M}
\hspace{-5pt}
 |S(a,M^*)|^2
\ll \|\widetilde{\eta}\|_1^2\Big(
L^{1/k}+\tilde{q}^{-1}L^{2/k}\min\big(1,\tilde{q}^rL^{-1} \big)
\Big)
\]
with $\|\widetilde{\eta}\|_1\ll \min\big(HL, (HL)^{1/2}\big)$ by Lemma~\ref{etatilde}.

Therefore
\[
 \Sigma_1\ll 
\sum_{L=2^j\le H^{-1}}
(LH)^{-2\epsilon} 
H^2 L^{2-2\alpha/k}\Big(L^{1/k}+L^{2/k}\tilde{q}^{-1}
\min\big(1,\tilde{q}^r L^{-1}\big)
\Big)
\]
That gives $\Sigma_1\ll H^{2(\alpha-1)/k}\big(H^{1/r}+H^{1/k}\big)$. In the same way, we have
\[
 \Sigma_2\ll 
\sum_{L=2^j> H^{-1}}
(LH)^{2\epsilon} 
H L^{1-2\alpha/k}\big(L^{1/k}+L^{2/k-1}\tilde{q}^{r-1}\big)
\]
and $\Sigma_2\ll H^{2(\alpha-1)/k}\big(H^{1/r}+H^{1/k}\big)$.

This concludes the proof for $\nu_F=1$. For $\nu_F>1$ we still have to prove the bound for
$F-\tilde{F}_*$.
In this case, by definition,
\[
 F(x)-\tilde{F}_*(x)=\sum_{\substack{c=1\\ P'(c)\equiv 0}}^{\tilde{p}}
\sum_{n\equiv c}
\frac{e\big(P(n)x\big)}{n^\alpha}
\]
where the congruences are$\pmod{\tilde{p}}$. Then it is enough to prove
\[
\frac{1}{|A(\tilde{q}^r)|} \sum_{a\in A(\tilde{q}^r)}
\sup_{M\le M^*<2M}
\Big|
\sum_{n\equiv c}\frac{e(h P(n))-1}{n^{\alpha}}e\big(\frac{a P(n)}q\big)
\Big|^2
\ll H^{2(\alpha-1)/k+2/\nu_0r}.
\]
Proceeding as before, after a dyadic subdivision and disregarding the terms $n>q^{1/(k+1)}$ by the
trivial estimate, we have to deal with
\[ 
 \Sigma_1= 
\sum_{L=2^j\le H^{-1}}
(LH)^{-2\epsilon}
\frac{1}{|A(\tilde{q}^r)|} \sum_{a\in A(\tilde{q}^r)}
\sup_{H\le |h|<2H}
\big|S(a,|h|^{-1})\big|^2
\]
and
\[
 \Sigma_2=
\sum_{H^{-1}<L=2^j\le q^{k/(k+1)} }
(LH)^{2\epsilon}
\frac{1}{|A(\tilde{q}^r)|} \sum_{a\in A(\tilde{q}^r)}
\sup_{H\le |h|<2H}
\big|S(a,|h|^{-1})\big|^2
\]
where now $c_n=0$ when $n\not\equiv c\pmod{\tilde{p}}$.

When applying Lemma~\ref{osc2L} with $t=\tilde{q}^r$ and $N\asymp L^{1/k}$,
relaxing$\pmod{\tilde{q}}$  to$\pmod{\tilde{p}}$ we obtain the bound
$O\big(1+L^{2/k}/\tilde{p}^2\big)$ for the double summation. On the other hand, fixing $m$,
$|P(c+l\tilde{p})-P(m)|\le \tilde{q}^r$ only can hold for
$O\big(\tilde{q}^r/\tilde{p}L^{1-1/k}\big)$ and the double summation is
$O\big(\tilde{q}^r/\tilde{p}^2L^{1-2/k}\big)$.

Recalling $\|\widetilde{\eta}\|_1\ll \min\big(HL, (HL)^{1/2}\big)$ by Lemma~\ref{etatilde}, we have 
\[
 \Sigma_1\ll 
\sum_{L=2^j\le H^{-1}}
(LH)^{-2\epsilon} 
H^2 L^{2-2\alpha/k}\Big(1+L^{2/k}\tilde{p}^{-2}\min\big(1,\tilde{q}^rL^{-1}\big)\Big)
\]
that is $\ll H^{2(\alpha-1)/k+2/r\nu_F}$, as expected, using $\tilde{q}^r<H^{-1}$ and
$\tilde{p}=\tilde{q}^{1/(\nu_F+1)}$.
And
\[
 \Sigma_2\ll 
\sum_{L=2^j> H^{-1}}
(LH)^{2\epsilon} 
H L^{1-2\alpha/k}\big(1+L^{-1+2/k}\tilde{p}^{-2}\tilde{q}^r\big)
\ll H^{2(\alpha-1)/k+2/r(\nu_F+1)}.
\]
This concludes the proof.
\end{proof}

\begin{proof}[Proof of Proposition~\ref{cheby}]

For $q^{-1/4}<|h|<2\widetilde{q}^{-r}$, Proposition~\ref{osc3} and Cheb\-yshev's inequality after a subdivision into dyadic intervals (which introduces an extra logarithm), give a contribution $O\big((\tilde q^r)^{-2\epsilon} |A(\tilde q^r)|  \log q\big)$.

For  $\frac 12q^{-r}<|h|\le q^{-1/4}$, the trivial bound when $\nu_F>1$
\[
 |F^*(\frac aq+h)-F^*(\frac aq)|\ll p^{-1}|h|^{\frac{\alpha-1}{k}}\ll |h|^{\frac{\alpha-1}k+\frac{1}{r(\nu_F+1)}}
\]
gives an admissible contribution. It remains to  take care of $F_*$ in the range $\frac 12q^{-r}<|h|\le q^{-1/4}$. The same argument as above with Chebyshev's inequality after a dyadic subdivision, but now using Proposition~\ref{osc1}, gives $O\big((q^{1/4})^{-2\epsilon} q \log q\big)$.

Adding these contributions, we get the expected bound
in the range $q\ge \widetilde{q}^{2r(2+\epsilon^{-1})-2\epsilon^{-1}}$ that is wider that the one in the statement.
\end{proof}

\section{Arithmetic lemmata}

If $q$ is a prime power such that $P'$ has an unusual number of zeros modulo $q$, then the behavior
of $F(a/q+h)-F(a/q)$ depends, in part, on a short exponential sum, as we saw in
Proposition~\ref{pointw}, and a basic task is to prove that
it is large enough for a significant part of the values of $a$ when working in a fixed interval. We
state a general result of this kind here, with no reference to the actual parameters and frequencies
appearing in our particular case.

\begin{lemma}\label{turan}
Let $\{\lambda_n\}_{n=1}^N$ be a list of integers and $q\in\Z^+$. For every closed interval
$I\subset [0,1]$ satisfying $|I|\ge N^2/q$, it holds
\[
\frac{1}{q|I|}
\#\Big\{
a\in\Z\; :\; \frac aq\in I
\text{ and }
\big|
\sum_{n=1}^Ne\big(\frac{\lambda_n a}{q}\big)
\big|
\ge 
\big(
\frac{|I|}{100}
\big)^N
\Big\}
\ge 
\frac 1{(100N)^2}.
\]
\end{lemma}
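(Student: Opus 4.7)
Set $z_n=e(\lambda_n/q)$ so that $|z_n|=1$ and $S(a)=\sum_{n=1}^N z_n^a$. The statement has the flavour of a Tur\'{a}n-type power-sum inequality: on a positive proportion of the integers in $qI$, the exponential sum $|S|$ must exceed the stated threshold. The plan is to chop $qI\cap\Z$ into $K=\lfloor q|I|/(100N)^2\rfloor$ consecutive blocks $B_1,\ldots,B_K$ of length $L\asymp (100N)^2$ and to exhibit in each block at least one integer $a$ with $|S(a)|\ge (|I|/100)^N$; summing over blocks yields the claimed lower bound on the count.

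Inside a single block $B=\{M+1,\ldots,M+L\}$, I would sample $N$ equispaced nodes $a_j=M+j\Delta$ with $\Delta=\lfloor L/N\rfloor\asymp 100^2N$, and set $w_n:=z_n^{\Delta}\in S^{1}$. A Lagrange interpolation argument produces a polynomial $\psi(w)=\sum_{j=1}^N c_j w^{\,j-1}$ of degree $\le N-1$ with $\psi(w_n)=z_n^{-M}$, which yields the identity
\[
\sum_{j=1}^N c_j\, S(a_j)=\sum_{n=1}^N z_n^M \psi(w_n)=N,
\]
so that $\max_j|S(a_j)|\ge N/\sum_j|c_j|$. Using the explicit Lagrange formula together with the elementary Mahler-type bound $\|\prod_{m\ne n}(w-w_m)\|_{\ell^1}\le 2^{N-1}$ (since $|w_m|=1$), one finds
\[
\sum_{j=1}^N|c_j|\;\le\; \frac{2^{N-1}\,N}{\displaystyle\min_{n}\prod_{m\ne n}|w_m-w_n|}.
\]
Hence any lower bound on $\min_n\prod_{m\ne n}|w_m-w_n|$ directly produces one on $\max_j|S(a_j)|$, and to match the target $(|I|/100)^N$ it suffices to secure $\min_n\prod_{m\ne n}|w_m-w_n|\ge 2^{N-1}(|I|/100)^N$.

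The main obstacle is controlling this Vandermonde-type product, since each factor $|w_m-w_n|=2|\sin(\pi(\lambda_m-\lambda_n)\Delta/q)|$ may collapse whenever $(\lambda_m-\lambda_n)\Delta/q$ lies close to an integer. My plan is to exploit the freedom in the choice of $\Delta$ (equivalently, the starting index $M$ across adjacent blocks): for each of the $\binom{N}{2}$ pairs $(m,n)$, the set of ``bad'' $\Delta\in[1,\lfloor L/N\rfloor]$ with $\|(\lambda_m-\lambda_n)\Delta/q\|<\delta$ has density $O(\delta)$, so an averaging / pigeonhole argument leaves a positive fraction of $\Delta$'s avoiding simultaneous near-resonances for all pairs. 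The hypothesis $|I|\ge N^2/q$ supplies exactly the slack $\Delta\asymp 100^2 N$ needed to keep the total bad density below one while still yielding the Vandermonde lower bound. This gives one good integer per block, and summing over the $K$ blocks completes the proof.
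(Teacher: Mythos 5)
Your plan is in the right spirit — Turán-type power sums, a Lagrange-interpolation identity, and a block decomposition to convert a pointwise lower bound into a counting lower bound — but the step you yourself flag as ``the main obstacle'' is a genuine gap, not a technicality, and the salvage you sketch does not close it.

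The difficulty is that the Lagrange/Vandermonde approach requires $\min_n\prod_{m\ne n}|w_m-w_n|$ to be bounded below, and there is \emph{no} hypothesis on the $\lambda_n$ that makes this possible. If $q\mid\lambda_m-\lambda_n$ for some $m\ne n$, then $w_m=w_n$ for every $\Delta$ and the interpolation is singular; more generally, if the $\lambda_n$ are close together and $q$ is large, all $w_n$ cluster near $1$ and the Vandermonde is tiny for every admissible $\Delta$. Your ``density $O(\delta)$ of bad $\Delta$'' claim is simply false in this regime: for a pair with $|\lambda_m-\lambda_n|$ small compared with $q/\Delta$, the quantity $\|(\lambda_m-\lambda_n)\Delta/q\|$ stays below $\delta$ for \emph{all} $\Delta$ in the range as soon as $\delta\gtrsim|\lambda_m-\lambda_n|\Delta_{\max}/q$, so the bad set can have density $1$. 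The hypothesis $|I|\ge N^2/q$ gives you room in one direction but does not prevent this failure. There is a smaller second issue: chopping into blocks of length $\asymp(100N)^2$ only produces a nonempty block when $q|I|\gtrsim(100N)^2$, whereas the hypothesis only gives $q|I|\ge N^2$; that borderline case needs separate handling.

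The paper's proof works precisely by avoiding any Vandermonde lower bound. It applies Tur\'an's First Main Theorem (Theorem~\ref{turan1}, from Montgomery, \S5.2) to the sums $\widetilde s_\nu=\sum_n e(\lambda_n m\nu/q)$ to get $|s_0|\ge N(|I|/4e)^{N-1}$, where $s_0=\max_{a/q\in I}|\sum e(\lambda_na/q)|$; that theorem yields the required exponential lower bound with coefficient-sum control coming from a generating-function argument, \emph{without} assuming the $z_n$ are distinct or separated. It then applies Theorem~\ref{turanm} (a power-sum inequality $\sum_{\nu\le H}|s_\nu|^2\ge|s_0|^2/(e^{8N^2/H}-1)$) over the \emph{whole} window $H=q|I|/2$ to conclude that a proportion $>2(100N)^{-2}$ of the $\nu\in[1,H]$ have $|s_\nu|\ge|s_0|/86N$. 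This global averaging replaces your block-by-block pigeonhole. If you want to pursue your route, the missing ingredient is exactly the content of Tur\'an's First Main Theorem: a coefficient bound $\sum_j|c_j|\le(2e(M+N)/N)^{N-1}$ that holds \emph{uniformly} over all unimodular $z_n$, with no separation hypothesis — you should invoke that theorem (or reproduce its proof) rather than attempt a Vandermonde estimate.
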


The proof is based on Tur\'{a}n method as described in \cite{Mon}. In particular, we employ

\begin{theorem}[Tur\'{a}n's First Main Theorem {\cite[\S5.2]{Mon}}]\label{turan1}
Let $s_\nu$ denote the sum
\[
s_\nu=\sum_{n=1}^Nb_nz_n^\nu
\qquad\text{with }
b_n,z_n\in\C
\text{ and }
|z_n|\ge 1.
\]
Then for any non-negative integer $M$ there exists $\nu\in [M+1,M+N]\cap \Z$ such that
\[
|s_\nu|\ge
|s_0|\Big(
\frac{N}{2e(M+N)}
\Big)^{N-1}.
\]
\end{theorem}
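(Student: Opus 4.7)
The plan is to apply Turán's First Main Theorem (Theorem~\ref{turan1}) directly to the power sum obtained by viewing $S(\nu)=\sum_{n=1}^N e(\lambda_n\nu/q)$ as a function of integer $\nu$. Take $b_n=1$ and $z_n=e(\lambda_n/q)$, so $|z_n|=1$ and $s_0=\sum_n b_n=N$. Then for any non-negative integer $M$, Turán's theorem provides some $\nu\in[M+1,M+N]$ with
\[
|S(\nu)|\ge N\left(\frac{N}{2e(M+N)}\right)^{N-1}.
\]

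First, I would cover the integer points $a$ with $a/q\in I$ by disjoint length-$N$ windows $W_j=[M_j+1,M_j+N]$, where $M_0=\lceil q\alpha\rceil -1$ and $M_j=M_0+jN$, for $j=0,1,\ldots,L-1$ with $L=\lfloor q|I|/N\rfloor$. The hypothesis $|I|\ge N^2/q$ gives $L\ge N$, so the windows genuinely partition a large portion of $qI$. Applying Turán in each window produces one point $\nu_j\in W_j$ with the lower bound above, and these $\nu_j$ are automatically distinct since they lie in disjoint windows.

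Next, I would analyze which windows are \emph{good}, meaning that the Turán bound is already at least $(|I|/100)^N$. An elementary manipulation of $N\bigl(N/(2e(M_j+N))\bigr)^{N-1}\ge (|I|/100)^N$ shows this occurs precisely when $M_j+N\lesssim 100N/|I|$ (up to an $N$-th root factor that tends to $1$). Counting the $j$'s that meet this condition and combining with the lower bound $L\ge N$, one obtains at least $\min\bigl(L,\,c/|I|\bigr)$ good windows for an explicit constant $c$ of order $50/e$. Dividing by the total count $q|I|$ of $a$'s in $qI$ and using both $q|I|\ge N^2$ and $|I|\le 1$, the resulting density comes out to at least $1/(100N)^2$ in either regime (the case $L\le c/|I|$ gives density $\ge 1/N$, and the opposite case gives density $\ge 50/(eq|I|^2)\ge 50/(eN^2|I|)$).

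The main obstacle is that Turán's bound degrades as $M_j+N$ grows, which happens precisely when $I$ sits far from $0$ on the circle (i.e.\ $q\alpha$ is not small). I would handle this by exploiting the reflection symmetry $|S(a)|=|S(q-a)|$, which follows from $\overline{s_\nu}=s_{-\nu}$ for unit-modulus $z_n$, to reduce to the case $\alpha\le 1/2$. Even within $[0,1/2]$ the estimate $M_j+N\le q\alpha+(j+1)N$ is delicate, and I would combine the covering/Turán argument with a separate second-moment estimate $\sum_{a/q\in I}|S(a)|^2\gtrsim q|I|N$ (valid when $|I|$ is not too small, with off-diagonal contributions controlled by standard geometric-sum bounds) to cover the complementary regime where the windowed Turán bounds alone are insufficient. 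The final density estimate is then obtained by patching the two regimes together.
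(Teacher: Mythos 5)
Your proposal does not prove the statement it was meant to prove. The statement is Tur\'an's First Main Theorem itself (Theorem~\ref{turan1}): a lower bound for $\max_{M+1\le\nu\le M+N}|s_\nu|$ in terms of $|s_0|$ for an arbitrary power sum $s_\nu=\sum_{n=1}^N b_nz_n^\nu$ with $|z_n|\ge 1$. Your very first step is ``apply Tur\'an's First Main Theorem (Theorem~\ref{turan1}) directly,'' i.e.\ you assume exactly the result to be proved, and everything that follows is a sketch of a \emph{different} statement, namely Lemma~\ref{turan} of the paper (the density of $a$ with $a/q\in I$ and $|\sum_n e(\lambda_n a/q)|\ge(|I|/100)^N$). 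As a proof of Theorem~\ref{turan1} this is circular. Note also that the paper itself offers no proof of this theorem: it is quoted from Montgomery \cite[\S5.2]{Mon}, where the argument is of a completely different nature (one exhibits coefficients $c_{M+1},\dots,c_{M+N}$ with $s_0=\sum_\nu c_\nu s_\nu$ and bounds $\sum_\nu|c_\nu|\le\bigl(2e(M+N)/N\bigr)^{N-1}$ by a polynomial-interpolation argument exploiting $|z_n|\ge1$); nothing of that kind appears in your text, so the gap is total.

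Even judged as an attempt at Lemma~\ref{turan} (the statement you actually address), the sketch does not close. With $b_n=1$, so $s_0=N$, the windowed Tur\'an bound $N\bigl(N/(2e(M_j+N))\bigr)^{N-1}$ falls far short of $(|I|/100)^N$ as soon as the window sits at height $M_j\asymp q$ with $N$ large: since $|I|\ge N^2/q$, one has $(|I|/100)^N\ge(N^2/(100q))^N$, and the ratio of the Tur\'an bound to this is roughly $\bigl(50/(eN)\bigr)^{N-1}$, which is tiny. The reflection $a\mapsto q-a$ only reduces to intervals in $[0,1/2]$, so this regime is generic, not exceptional; and your proposed patch, a restricted second-moment lower bound $\sum_{a/q\in I}|S(a)|^2\gg q|I|N$, is exactly the sort of ``largeness on a short subinterval'' the lemma is trying to establish and is not justified for arbitrary integers $\lambda_n$. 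The paper's route for Lemma~\ref{turan} avoids this: it fixes $a_0/q\in I$ maximizing $|S(a)|$, applies the second-main-theorem-type estimate (Theorem~\ref{turanm}) with $H=|I|q/2$ to produce a positive proportion of $\nu\le H$ with $|s_\nu|\ge|s_0|/86N$, and then bounds $|s_0|$ from below by applying Theorem~\ref{turan1} to the dilated sums $\widetilde{s}_\nu=\sum_n e(\lambda_n m\nu/q)$ along an arithmetic progression $\nu m/q$ lying inside $I$, so that the loss you encounter when $I$ is far from the origin never arises.
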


We shall also appeal to a result of the same flavor. 

\begin{theorem}[cf. {\cite[\S5.4]{Mon}}]\label{turanm}
With the notation as in the previous result, for any $H\ge N$
\[
\sum_{\nu=1}^H
|s_\nu|^2\ge
\frac{|s_0|^2}{e^{8N^2/H}-1}.
\]
\end{theorem}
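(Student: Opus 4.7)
The plan is to use Cauchy--Schwarz in a duality form. The starting observation is that if one can exhibit coefficients $c_1,\dots,c_H\in\C$ such that $\sum_{\nu=1}^H c_\nu z_n^\nu = 1$ for every $n = 1,\dots,N$, then substituting the definition of $s_\nu$ gives $\sum_{\nu=1}^H c_\nu s_\nu = \sum_n b_n = s_0$, and Cauchy--Schwarz immediately yields
\[
|s_0|^2 \;\le\; \Bigl(\sum_{\nu=1}^H |c_\nu|^2\Bigr)\sum_{\nu=1}^H |s_\nu|^2.
\]
Hence the theorem reduces to constructing such coefficients with $\sum_{\nu=1}^H |c_\nu|^2 \le e^{8N^2/H} - 1$. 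Note that such a system of $N$ linear equations in $H\ge N$ unknowns is certainly solvable; the content is in the sharpness of the $\ell^2$-norm bound.

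To build the $c_\nu$, I would recast this as a polynomial extremal problem. Setting $P(x) = \sum_{\nu=1}^H c_\nu x^\nu$, the constraint $P(z_n) = 1$ is equivalent to $1 - P(x) = B(x)R(x)$, where $B(x) = \prod_{n=1}^N(1 - x/z_n)$ kills the interpolation conditions and $R$ is an auxiliary polynomial with $R(0) = 1$ and $\deg R \le H - N$, free to be chosen to minimize the coefficient $\ell^2$-norm of $P$. By Parseval, $1 + \sum_{\nu=1}^H |c_\nu|^2 = \frac{1}{2\pi}\int_0^{2\pi}|P(e^{i\theta})|^2\,d\theta$, so the problem becomes minimizing $\|1 - BR\|_{L^2(\mathbb T)}^2$. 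A Fej\'er-type smoothing $R(x) = M^{-1}\sum_{j=0}^{M-1} x^{jN}$ with $M \asymp H/N$ averages the oscillations of the Blaschke-like product $B$ while keeping $\deg(BR)\le H$; the hypothesis $|z_n|\ge 1$ ensures that $|1 - x/z_n|$ is bounded on the unit circle, which is what allows the product expansion to telescope cleanly.

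The main technical obstacle is extracting the sharp exponential shape $e^{8N^2/H}$ rather than a cruder estimate. A direct bound of $\|B\|_{L^2}^2$ only gives $4^N$, which is useless when $H \gg N$; the savings have to come from the smoothing factor $R$ together with a careful expansion of $\|BR\|_{L^2}^2$ in terms of $\langle z_{n_1}\overline z_{n_2}\cdots\rangle^{-1}$ cross-terms, where integration against $R$ annihilates most Fourier frequencies of $B\bar B$. I expect the constant $8$ to emerge from a product $\prod_{n=1}^N(1 + O(N/H)) \le \exp(O(N^2/H))$, with the implied constant optimized by tuning $M$. A secondary annoyance is that $H$ need not be divisible by $N$ in the definition of $R$; this is absorbed by rounding $M = \lfloor H/N\rfloor$ and using monotonicity in $H$ of the target bound. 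As a consistency check, note that one cannot hope to derive Theorem~\ref{turanm} by iterating Theorem~\ref{turan1} over blocks of length $N$: that strategy yields a loss of the form $(2e)^{2N}$, independent of $H$, whereas Theorem~\ref{turanm} becomes a constant-ratio bound already at $H\asymp N^2$, which forces the argument to be genuinely global in $\nu$.
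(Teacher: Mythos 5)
The paper does not prove this statement at all: it cites Montgomery's
\emph{Ten Lectures} \S5.4, adding only the remark that the displayed inequality is
slightly stronger than Theorem~3 as stated there but is the estimate that
Montgomery's argument actually yields. So there is no internal proof in the paper
to compare against, only a pointer.

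\textbf{On the proposal.}
Your duality reduction is correct and is indeed the backbone of the argument in
Montgomery: if $P(z)=\sum_{\nu=1}^H c_\nu z^\nu$ interpolates $P(z_n)=1$ for all
$n$, then $\sum_\nu c_\nu s_\nu = s_0$ and Cauchy--Schwarz gives the theorem with
constant $\sum_\nu|c_\nu|^2$. Writing $1-P=BR$ with $B(z)=\prod_n(1-z/z_n)$,
$R(0)=1$, $\deg R\le H-N$ is the right framing, and the quantity to control is
$\|BR\|_{L^2(\mathbb T)}^2$, since $\sum_\nu|c_\nu|^2=\|BR\|_{L^2}^2-1$ (a small
slip: your Parseval line should read
$1+\sum_\nu|c_\nu|^2=\frac{1}{2\pi}\int|BR|^2$, not $\int|P|^2$, because $P$ has
no constant term; and your proposed
$R(x)=M^{-1}\sum_{j=0}^{M-1}x^{jN}$ has $R(0)=1/M$, not $1$, so the constant
term of $1-BR$ is $1-1/M\ne0$ and the reduction breaks).

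The genuine gap is the estimate $\|BR\|_{L^2}^2\le e^{8N^2/H}$, which you leave
as an ``expectation.'' Beyond being unproved, the specific mechanism you
propose does not appear to work. With $R$ a Dirichlet kernel in $x^K$ (any
lacunarity $K$), $|R|^2$ is $M$ times a nonnegative Fej\'er-type kernel, and
$\|BR\|_{L^2}^2=\int|B|^2|R|^2$ therefore weights $|B|^2$ by a bump of total mass
$M$; when $K>N$ the spectral disjointness you invoke gives exactly
$\|BR\|_{L^2}^2=M\,\|B\|_{L^2}^2$, and $\|B\|_{L^2}^2=\sum_{j\le N}|\widehat
B(j)|^2$ can be as large as $\binom{2N}{N}\sim 4^N/\sqrt N$ (take all
$z_n=-1$). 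Multiplying by a Dirichlet/Fej\'er kernel does not suppress this;
it only magnifies the mean of $|B|^2$, which is already exponential in $N$. The
saving over the na\"{\i}ve $4^N$ bound has to come from a fundamentally
different construction of the interpolating polynomial --- in Montgomery's
argument the factor of degree $\le H-N$ is not a fixed averaging kernel
multiplied onto $B$, but is chosen so that each zero $z_n$ is absorbed into a
low-$L^2$-mass factor $q_n$ of degree about $H/N$, and the $e^{8N^2/H}$ emerges
from a multiplicative bound $\prod_n\bigl(1+O(N/H)\bigr)$ proved via a
positivity/majorant device, not from Fej\'er smoothing of the Blaschke-type
product. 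Your closing observation that iterating Theorem~\ref{turan1} over
blocks yields only a loss $(2e)^{2N}$ independent of $H$, while the stated bound
is a constant-ratio inequality already at $H\asymp N^2$, is correct and is a
good sanity check that the argument must be global in $\nu$; but as written the
core $L^2$ estimate is missing and the proposed multiplier fails to deliver it.
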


Note that this statement is slightly stronger than Th.3 in \cite[\S5.4]{Mon} but this is the result
actually proved there.

\begin{proof}[Proof of Lemma~\ref{turan}]
Consider $s_\nu$ as in Theorem~\ref{turan1} with  $z_n=e(\lambda_n/q)$ and  $b_n=e(\lambda_n
a_0/q)$, choosing $a_0/q\in I$ such that $|s_0|\hspace{-1pt}=\hspace{-1pt}\max \big|\sum_{n=1}^N 
\hspace{-3pt}e(\lambda_n a/q)\big|$ for
$a/q\in I$.

Take $H=|I|q/2$ and note that perhaps changing $z_n$ by $\overline{z}_n$ we have $s_\nu=
\sum_{n=1}^N e(\lambda_n a/q)$  for $a/q\in I$ with $1\le \nu\le H$. Let $\delta$ be the proportion
of these values of $\nu$ satisfying $|s_\nu|\ge |s_0|/86N$. By Theorem~\ref{turanm}, dividing
by~$H|s_0|^2$, we have
\[
\delta+(1-\delta)\frac{1}{(86N)^2}\ge\frac{H^{-1}}{e^{8N^2/H}-1}.
\]
Writing $y=N^2/H<1$ it is easy to derive that $\delta>2(100 N)^{-2}$. Hence there are $H\delta>(100
N)^{-2}|I|q$ values of $\nu$ such that $|s_\nu|\ge |s_0|/86N$ and the result follows if we prove
\begin{equation}\label{ineq_turan}
\frac{|s_0|}{86N}\ge
\big(
\frac{|I|}{1000}
\big)^N.
\end{equation}
This is indeed a simple variation of Lemma 1 in \cite[\S5.3]{Mon}, due to Tur\'{a}n. By completeness
we provide the proof here.

Our assumptions allow to find an interval $I'\subset I$ with $a_0/q\in I'$ such that $|I'|\ge |I|/2$
and $q|I'|/N=m\in\Z$. If $\alpha$ is the lower limit of $I'$ then $\nu m/q\in I'$ for $M+1\le \nu\le
M+N$ with $M=\lfloor \alpha q/m\rfloor$, and $N/(M+N)\ge |I'|$.

By the maximal property of $|s_0|$ and applying Theorem~\ref{turan1} to
$\widetilde{s}_\nu=\sum_{n=1}^Ne(\lambda_n m\nu /q)$, we deduce
\[
|s_0|\ge\max_{M+1\le\nu\le M+N}|\widetilde{s}_\nu|
\ge 
|\widetilde{s}_0|
\big(
\frac{|I'|}{2e}
\big)^{N-1}
\ge 
N
\big(
\frac{|I|}{4e}
\big)^{N-1},
\]
that is stronger than \eqref{ineq_turan}.
\end{proof}

We also need a lower bound for a positive proportion of the $\tau_{0}$ in each interval (cf.
Proposition 4.2 of \cite{chaubi}). It can be that in some cases one can get a stronger result by the methods in \cite{katz}, but we could not locate a precise statement there.

\begin{lemma}\label{lbtau} 
Let $I\subset (0,1)$ be a closed interval.
Given $0<\epsilon<1$ there exist $C=C(\epsilon)$  such that if $Q |I|^{2+\epsilon}>C$
then
\[
\frac{Q^2|I|}{C\log Q}\le 
\#\big\{\frac{a}{q}\in I\;:\; Q\le q<2Q,\text{ $q$ prime and } |\tau_{0}|\ge \frac 12 \sqrt{q}
\big\}
\le C
\frac{Q^2|I|}{\log Q}.
\]
\end{lemma}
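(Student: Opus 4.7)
\emph{Plan.} I would prove the two inequalities separately.

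For the upper bound, the prime number theorem gives $\asymp Q/\log Q$ primes $q$ in $[Q,2Q)$, and for each such $q$ the number of residues $a$ with $a/q\in I$ is $\le q|I|+1$. The hypothesis $Q|I|^{2+\epsilon}>C$ forces $Q|I|\gg 1$ once $C=C(\epsilon)$ is large enough, so the $+1$ is absorbed into the leading term, and multiplying gives the desired $\le C\,Q^2|I|/\log Q$.

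For the lower bound my plan is a Paley--Zygmund (second moment) argument coupled with Weil's pointwise bound. Setting $J_q=\{a : a/q\in I\}$, so that $|J_q|=q|I|+O(1)$, I would expand
\[
S_2(q):=\sum_{a\in J_q}|\tau_0(a,q)|^2=\sum_{v\bmod q} N_v(q)\,\widehat{\mathbf{1}}_{J_q}(v), \qquad N_v(q)=\#\{(n,m)\in\F_q^2 : P(n)-P(m)\equiv v\},
\]
and invoke the Weil estimate for the curve $P(n)-P(m)=v$ to get $N_v(q)=q+O_P(\sqrt{q})$ for $v\ne 0$. Combined with Parseval $\sum_{v}|\widehat{\mathbf{1}}_{J_q}(v)|^2\le q|J_q|$ and the identity $\sum_{v\ne 0}\widehat{\mathbf{1}}_{J_q}(v)=-|J_q|$ (since $0\notin J_q$), this yields
\[
S_2(q)=(N_0(q)-q)|J_q|+O_P\bigl(q^2\sqrt{|I|}\bigr).
\]
By Chebotarev applied to the splitting field of $P(x)-y$ over $\Q(y)$, a positive proportion of primes $q\in[Q,2Q)$ are such that $P\bmod q$ is not a bijection, and for those one has $N_0(q)-q\asymp q$. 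Summing $S_2(q)$ over $q\in\Pi$ and using the hypothesis $Q|I|^{2+\epsilon}>C$ to dominate the aggregated error, I obtain $M_2:=\sum_{q\in\Pi}S_2(q)\gg Q^3|I|/\log Q$.

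Weil's pointwise bound gives $|\tau_0(a,q)|^2\le (k-1)^2 q\le 4k^2 Q$. Letting $T$ denote the count in the lemma, the splitting
\[
M_2=\sum_{|\tau_0|^2\ge q/4}|\tau_0|^2+\sum_{|\tau_0|^2<q/4}|\tau_0|^2
\]
bounds the low part by $\tfrac{Q}{2}\cdot(\text{total count})\ll Q^3|I|/\log Q$ with an explicit constant (controlled by the upper bound already proved), and the high part by $4k^2 Q\cdot T$. Choosing $C$ large enough that the main term dominates the low part then forces $T\gg Q^2|I|/\log Q$, which is the lower bound.

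The hard part will be the error analysis in the second moment. The naive bound $O(q^2\sqrt{|I|})$ defeats the main term $\sim q^2|I|$ for a single $q$ whenever $|I|\ll 1$, so the argument has to be run in averaged form over $q\in\Pi$, using bilinear cancellation in the resulting sums over moduli; the precise shape $Q|I|^{2+\epsilon}>C$ of the hypothesis is calibrated to make this averaged main term dominate. A secondary technical nuisance is the finite exceptional set of primes where $P(x)-P(y)$ factors in an unusual way modulo $q$, but these form a density-zero set and can be discarded without affecting $M_2$ beyond lower-order terms.
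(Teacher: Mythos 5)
Your overall strategy — second moment (Paley–Zygmund) plus Weil — is the same as the paper's, but there is a genuine gap in the error analysis, and the proposed workaround does not match the argument actually needed.

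Your Cauchy–Schwarz/Parseval bound for the off-diagonal terms in $S_2(q)$ produces an error $O(q^2\sqrt{|I|})$, which you correctly note defeats the main term $\asymp q^2|I|$ for each single $q$. You then propose to rescue the argument by averaging over moduli with ``bilinear cancellation,'' but this is vague and is \emph{not} what is needed: the per-$q$ estimate can be salvaged by a much simpler choice of H\"older pairing. Instead of bounding both factors in $\ell^2$, pair the Weil pointwise bound $|N_v(q)-q|\ll\sqrt{q}$ ($\ell^\infty$) against the $\ell^1$ norm of the exponential sum over the interval: since $J_q$ is a block of consecutive residues, $\sum_{v\ne 0}\bigl|\sum_{a\in J_q}e(av/q)\bigr|\ll\sum_{1\le v<q}\min\bigl(q|I|,\langle v/q\rangle^{-1}\bigr)\ll q\log q$, giving the error $O(q^{3/2}\log q)$ for each prime $q$ individually. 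This is precisely what the paper proves (their display $\sum_{a:a/q\in I}|\tau_0|^2=|I|\sum_{a=1}^{q-1}|\tau_0|^2+O(q^{3/2}\log q)$), and the hypothesis $Q|I|^{2+\epsilon}>C$ is exactly what makes $q^{3/2}\log q\ll q^2|I|$ (since $|I|\gg Q^{-1/(2+\epsilon)}\gg q^{-1/2}\log q$). No averaging over $q$ and no bilinear cancellation is required.

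A second, smaller gap: the step ``Chebotarev $\Rightarrow$ $P\bmod q$ not a bijection for a positive proportion of $q$ $\Rightarrow$ $N_0(q)-q\asymp q$'' needs justification in its second implication. That $P$ fails to be a permutation polynomial does not by itself give a linear-in-$q$ lower bound for $N_0(q)-q$ without something like Wan's bound on value sets of non-permutation polynomials, and for certain $P$ (compositions of linear and Dickson polynomials) the required inequality fails for infinitely many primes. The paper handles this by invoking Theorems~1.2 and 1.3 of \cite{chajim}, which yield, for a positive proportion of primes determined by splitting conditions, a quantitative version of $\sum_{a=1}^{q-1}|\tau_0|^2\ge\frac12 q^2$ (equivalently $N_0(q)-q\ge q/2+O(\sqrt{q})$), with the Dickson exceptions treated separately. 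Your Chebotarev appeal points in the right direction but elides exactly this point.
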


\begin{proof}
In \cite{chajim} is proved that $P(x)-P(y)+r$ is absolutely irreducible over $\F_q$ when $q\nmid
\deg P$ and $r\ne 0$. Hence by the Riemann hypothesis over finite fields (see Ch.3 of
\cite{schmidt}) the corresponding algebraic curve contains $q+O\big(q^{1/2}\big)$ points over $\F_q$
and proceeding as in Proposition~4.4 of \cite{chaubi} we have for each~$q$  
\begin{equation}\label{tau_av}
\sum_{a\;:\; a/q\in I}
|\tau_0|^2
=
|I|
\sum_{a=1}^{q-1} |\tau_{0}|^2
+O(q^{3/2}\log q)
\end{equation}
and the $O$-constant only depends on $P$ (see the details in \cite{chajim}).

Divide the values of $a$ in the left hand side of \eqref{tau_av} into two sets
$\mathcal{A}$ and $\mathcal{B}$ according $|\tau_0|\ge \frac 12 \sqrt{q}$ or not, respectively.
Assume 
\begin{equation}\label{tau_av_0}
\sum_{a=1}^{q-1} |\tau_{0}|^2\ge \frac 12 q^2,
\end{equation}
then 
\[
\sum_{a\in \mathcal{A}}
|\tau_0|^2
\ge 
\frac 12 |I|q^2
-\frac 14 q|\mathcal{B}|
+O(q^{3/2}\log q).
\]
By Weil's bound (see Corollary~2.1 in \cite[Ch.2]{schmidt}) $|\tau_0|\le (\deg P-1)q^{1/2}$ for
$q>\deg P$ and this and  the trivial estimate $|\mathcal{B}|\le q|I|+1$, give
$|\mathcal{A}|\gg q|I|$. In other words
(cf.
Proposition~4.2 in \cite{chaubi}), for $q$ large 
\[
q|I|\ll
\#\big\{a\;:\; \frac{a}{q}\in I\text{ and } |\tau_{0}|\ge \frac 12
\sqrt{q}
\big\}\le 
q|I|+1.
\]

Note that opening the square 
\begin{equation}\label{tau_trans}
\sum_{a=1}^{q-1} |\tau_{0}|^2=
q\#\big\{(n,m)\in\F_q\times\F_q\;:\; P(n)-P(m)=0\big\}-q^2.
\end{equation}
Then to deduce that \eqref{tau_av_0} holds for a positive proportion of the primes, the aim is to prove that for many primes the congruence $P(n)\equiv P(m)\pmod{q}$ has many solutions
distinct from the obvious diagonal ones. This is done in  \cite{chajim}. Theorem~1.2 in that paper gives
\[
 \sum_{a=1}^{q-1} |\tau_{0}|^2\ge q^2+O\big(q^{3/2}\big)
\]
if $P$ is not a composition of linear and Dickson polynomials. In the rest of the cases this inequality fails for infinitely many primes~$q$ but Theorem~1.3 in \cite{chajim} proves that it holds for a positive proportion of the primes, determined by their splitting properties in certain number fields. In this way we have that a stronger form of 
\eqref{tau_av_0} holds true for a positive proportion of the primes.
\end{proof}

Finally, we also need to extract a well-spaced subset from the fractions having a prime power as
denominator.

\begin{lemma}\label{spacing} 
Consider the set 
\[
X_0=\big\{\frac{a}{p^n}\in I\;:\; Q\le p^n<2Q,\text{ $p$ prime}\big\}
\]
where $I\subset (0,1)$ is a closed interval with $Q^{1-2/n}|I|>4$ and  $n>2$ is a fixed integer. 
Given $0<\alpha<1$ there exists $0<\beta<1$ (only depending on $\alpha$) such that for any
$X_1\subset X_0$ with  $|X_1|>\alpha |X_0|$ we can find $X_2\subset X_1$ with $|X_2|>\beta |X_0|$
satisfying $|x-y|\ge |I|/|X_0|$ for any $x$ and $y$ distinct elements of $X_2$.
\end{lemma}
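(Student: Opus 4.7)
The plan is to combine an alternating-colour interval partition of $I$ with a Cauchy--Schwarz / second-moment bound on the number of close pairs in $X_0$.

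Set $M=|X_0|$ and $\delta=|I|/M$. By the prime number theorem together with $Q^{1-2/n}|I|>4$ one has $M\asymp Q^{1+1/n}|I|/\log Q$ and $\delta\ll 1/(2Q)\le 1/p^n$ for every prime $p$ with $Q\le p^n<2Q$. Consequently, the layer $A_p=\{a/p^n\in I\}$ has consecutive terms more than $\delta$ apart, so every pair $\{x,y\}\subset X_0$ with $|x-y|\le\delta$ must involve two distinct primes.

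Next, partition $I$ into $\lfloor|I|/\delta\rfloor$ consecutive intervals of length $\delta$ coloured alternately red and blue. A set with at most one point per red (resp.\ blue) interval is automatically $\delta$-separated, so the task reduces to bounding below the number of intervals of one colour meeting $X_1$. Letting $E$ be the number of unordered pairs $\{x,y\}\subset X_0$ with $0<|x-y|\le\delta$, Cauchy--Schwarz gives
\[
\#\{J : X_1\cap J\ne\emptyset\}\;\ge\;\frac{|X_1|^2}{\sum_J|X_1\cap J|^2}\;\ge\;\frac{\alpha^2 M^2}{|X_1|+2E}.
\]
Once $E\ll M$ is established this produces $\Omega(\alpha^2 M)$ nonempty intervals, half of them of one colour; picking one element of $X_1$ from each yields $|X_2|\ge\beta M$ with $\beta$ of order $\alpha^2$.

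To bound $E$, fix distinct primes $p\neq q$ and observe that $|a/p^n-b/q^n|\le\delta$ amounts to the integer $D=aq^n-bp^n$ satisfying $|D|\le\delta(pq)^n$. Since $I\subset(0,1)$, each $D\ne0$ admits at most one pair $(a,b)$, and this pair is admissible precisely when $D\,\overline{q^n}\bmod p^n$ lies in $p^nI$ \emph{and} $-D\,\overline{p^n}\bmod q^n$ lies in $q^nI$. These are conditions on $D$ modulo $p^n$ and $D$ modulo $q^n$ respectively, which by CRT are independent, so the heuristic count of admissible $D$'s per pair $(p,q)$ is $\delta(pq)^n|I|^2$. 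Summing via $\sum_p p^n\asymp Q^{1+1/n}/\log Q$,
\[
E\;\ll\;\delta|I|^2\Big(\sum_p p^n\Big)^2\;\asymp\;|I|\,M\;\le\;M.
\]
The main obstacle is making this double equidistribution rigorous: the range $|D|\le\delta(pq)^n$ is shorter than either modulus $p^n,q^n$, so one cannot appeal to periodicity and must instead use Erd\H{o}s--Tur\'an-type discrepancy bounds for the short arithmetic progressions $\{D\,\overline{q^n}\bmod p^n\}$ and $\{-D\,\overline{p^n}\bmod q^n\}$. The hypothesis $Q^{1-2/n}|I|>4$ is precisely what makes the resulting discrepancy errors negligible compared to $M$, so that the $|I|^2$ savings go through uniformly in $p$ and $q$.
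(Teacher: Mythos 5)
Your overall architecture — cover $I$ by $\delta$-intervals coloured alternately, then show via a second-moment (Cauchy–Schwarz) argument that at most $O_\alpha(1)$ fraction of $X_0$ lives in "crowded" cells — is essentially the same scaffolding the paper uses (there, the crowded cells are those $I_i$ with $|I_i\cap X_0|>\lambda$, and the alternating-colour deletion appears as "eliminate at most one half … to assure they do not belong to consecutive intervals"). The genuine divergence, and the gap, is in the one step that carries all the arithmetic content: bounding the number $E$ of close pairs.

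You reduce $E\ll M$ to a double-equidistribution statement: for each ordered pair of distinct primes $(p,q)$, the number of $D\ne 0$ with $|D|\le\delta(pq)^n$ such that $D\,\overline{q^n}\bmod p^n\in p^nI$ \emph{and} $-D\,\overline{p^n}\bmod q^n\in q^nI$ should be $\asymp\delta(pq)^n|I|^2$. You correctly note this cannot be read off from periodicity because the range for $D$ is shorter than either modulus, and you leave its verification to "Erdős–Turán-type discrepancy bounds for the short arithmetic progressions $\{D\,\overline{q^n}\bmod p^n\}$". This is where the proposal breaks down: for a short progression $\{D\overline{q^n}\}_{|D|\le N}$ with $N\ll p^n$, the discrepancy is governed by the continued-fraction expansion of $\overline{q^n}/p^n$, and there is no uniform bound over all pairs $(p,q)$ strong enough to give the $|I|^2$ saving without further input. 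Moreover, the role you assign to the hypothesis $Q^{1-2/n}|I|>4$ ("makes the discrepancy errors negligible") does not match what that hypothesis actually delivers: its purpose is Diophantine separation of the \emph{denominators}, not equidistribution of the \emph{numerators}.

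The paper closes the gap by a route you did not identify. After majorizing the cluster-counting function $f(x)=\#\{p^n:\langle p^nx\rangle<2|I|Q/|X_0|\}$ by a Fejér kernel of length $L\asymp|X_0|/(|I|Q)$, the bound on $\int_I f^2$ reduces to counting quadruples $(k_1,k_2,p_1^n,p_2^n)$ with $|k_1|,|k_2|\le L$ and $|p_1^nk_1-p_2^nk_2|<|I|^{-1}$. The decisive observation is that for $(k_1,k_2)$ fixed and not both zero (and not the trivial $k_1=k_2$, $p_1=p_2$ case), there is at most \emph{one} admissible pair $(p_1^n,p_2^n)$: two such pairs would force $|p_1^n/p_2^n-p_3^n/p_4^n|<2|I|^{-1}/Q$, while ratios of distinct $n$-th prime powers in $[Q,2Q)$ are separated by at least $(2Q)^{-2/n}$, and $Q^{1-2/n}|I|>4$ rules this out. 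This is a clean, elementary spacing argument, entirely independent of the distribution of $\overline{q^n}\bmod p^n$, and it is precisely the ingredient missing from your proposal.
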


\begin{proof}
Let $D$ be the number of primes $p$ in $[Q^{1/n},(2Q)^{1/n})$. It is the number of different
possible denominators of the elements of $X_0$ and $X_0$ is the union of at most $D$ finite
arithmetic progressions. Hence the result is obvious if $D$ is bounded by an absolute constant and
we  can assume $D\ge 6$. In this case
\begin{equation}\label{size_x0}
|X_0|\ge D\big( Q|I|-1\big)>4Q|I|.
\end{equation}

Omit one of the extreme points of $I$ and subdivide the result into $|X_0|$ equal consecutive
half-open intervals $I_1,I_2,\dots, I_{|X_0|}$ of length $|I|/|X_0|$. Consider a subsequence of them
$O_i=I_{n_i}$ defined by the property $|O_i\cap X_0|\le \lambda$.
Assume the following inequality that we shall prove later
\begin{equation}\label{no_cluster}
\#\big\{
x\in X_0\;:\; x\not\in \bigcup O_i
\big\}
\le \frac{8000}{\lambda}|X_0|.
\end{equation}
Taking $\lambda = 16000/\alpha$, we have 
\[
\#\big\{
x\in X_1\;:\; x\in \bigcup O_i
\big\}
> \frac{1}{2}|X_1|,
\]
and $|O_i\cap X_1|\le \lambda=16000/\alpha$ implies that $O_i\cap X_1\not=\emptyset$ for at least
$\alpha^2|X_0|/32000$ values of~$i$. Choose an element in each of these nonempty sets and eliminate
at most one half of them to assure that they do not belong to consecutive intervals $I_j$. Then we
get a set $X_2$ such that $|X_2|>\beta |X_0|$ with $\beta=\alpha^2/64000$ and $|x-y|\ge |I|/|X_0|$
for any $x,y\in X_2$, $x\ne y$.

\

It only remains to prove \eqref{no_cluster}. Consider the function
\[
f(x)=\#\Big\{
p^n\in [Q,2Q)\; :\; \langle p^nx\rangle <\frac{2|I|Q}{|X_0|}
\Big\}
\]
where $\langle \cdot\rangle$ denotes the distance to the nearest integer. 
By \eqref{size_x0} the elements of $|I_i\cap X_0|$ have different denominators
(because $|I_i|=|I|/|X_0|<p^{-n}$).
If $\Lambda \le |I_i\cap X_0|<2\Lambda$ then $f(t)\ge \Lambda$ for every $t\in I_i$. Hence the
number of intervals $I_i$ with $|I_i\cap X_0|\in [\Lambda, 2\Lambda)$ is at most 
$\Lambda^{-2}|I|^{-1}|X_0|\int _I f^2$ 
and we have
\begin{equation}\label{no_cluster2}
\#\big\{
x\in X_0\;:\; x\not\in \bigcup O_i
\big\}
\le \frac{4|X_0|}{|I|\lambda}\int _I f^2.
\end{equation}

Let $L$ be the integral part of $|X_0|/(4|I|Q)$, note that $L\ge 1$ by \eqref{size_x0},
then it is easy to see that
\[
f(x)\le \frac{\pi^2}{4}
\sum_{Q\le p^n<2Q}
\frac{\sin^2(\pi Lp^nx)}{L^2\sin^2(\pi p^nx)}
=
\frac{\pi^2}{4L}
\sum_{Q\le p^n<2Q}
\sum_{|k|\le L}
\big(1-\frac{|k|}{L}\big)
e(p^nkx).
\]
Substituting in \eqref{no_cluster2}, by Lemma~7.1 of \cite{IwKo} (with $Y=|I|/2$ and a translation), which is essentially to add a Fej\'er kernel to the integral,
we have
\[
\#\big\{
x\in X_0\;:\; x\not\in \bigcup O_i
\big\}
\le \frac{61|X_0|}{\lambda L^2}
\mathop{\sum\sum\ \ \ \sum\sum}_{
\substack{ |k_1|,|k_2|\le L\ \ Q\le p_1^n,p_2^n<2Q\\ |p_1^nk_1-p_2^nk_2|<|I|^{-1} }
} 
\big(1-\frac{|k_1|}{L}\big)
\big(1-\frac{|k_2|}{L}\big).
\]
The terms with $k_1=k_2=0$ contribute to the sum as $D^2$.
If $k_1=k_2\ne 0$, necessarily $p_1^n=p_2^n$ and the contribution of the sums is $DL$.
Finally in the rest of the cases, for each $(k_1,k_2)$ there is at most a pair $(p_1^n,p_2^n)$
satisfying $|p_1^nk_1-p_2^nk_2|<|I|^{-1}$ because a second pair $(p_3^n,p_4^n)$ would give, assuming
without loss of generality $k_1\ne 0$ and $p_1>p_2$,
\[
\frac{2|I|^{-1}}{Q}
>
\Big|
\frac{p_1^n}{p_2^n}-\frac{p_3^n}{p_4^n}
\Big|
>
\Big|
\frac{p_1}{p_2}-\frac{p_3}{p_4}
\Big|
\ge
\frac{1}{(2Q)^{2/n}}
\]
that contradicts our assumption $Q^{1-2/n}|I|>4$. Hence we can forget the summations over $p_1^n$
and $p_2^n$, getting that the multiple sum is at most $L^2$.

Summing up, we have
\[
\#\big\{
x\in X_0\;:\; x\not\in \bigcup O_i
\big\}
\le \frac{61|X_0|}{\lambda L^2}
\Big(
D^2+DL+L^2
\Big).
\]
Using the definition of $L$ and the first inequality in \eqref{size_x0}, it follows
\[
 L>\frac{|X_0|}{8|I|Q}
\ge D\frac{|I|Q-1}{8|I|Q}
>\frac{3D}{32}.
\]
Substituting in the previous formula, we obtain \eqref{no_cluster}.
\end{proof}

\section{Some Cantor-like sets}

For the lower bound in the main theorem we need to compute a lower bound for the Hausdorff dimension
of some limit sets of sequences of nested intervals.

In general, if $\mathcal{G}_1, \mathcal{G}_2, \mathcal{G}_3,\dots$ are sets of
disjoint closed intervals in $[0,1]$, we say that $I\in \mathcal{G}_i$ is the \emph{son} of $J\in
\mathcal{G}_{i-1}$ if $I\subset J$. And we say that $I_1,I_2\in \mathcal{G}_i$ are \emph{brothers}
if they are sons of the same interval $J\in \mathcal{G}_{i-1}$. By convenience we define
$\mathcal{G}_0=\{[0,1]\}$ and then all the intervals of $\mathcal{G}_1$ are brothers.

In the usual Cantor set (and in some straightforward variants) the number of sons of each interval
is fixed. The rough idea is that more sons  imply more dimension and then a number of sons
bounded from below is enough to get a lower bound for the Hausdorff dimension.

\begin{lemma}\label{cantor}
Let $\mathcal{G}_i$, $i\in\Z^+$, be nonempty sets of  disjoint intervals
in $[0,1]$ and $G_i=\bigcup_{I\in \mathcal{G}_i} I$. If $G_1\supset G_2\supset G_3\supset\dots$ and
any $I\in \mathcal{G}_i$ has at least $m_i>1$ brothers separated by gaps of at least $\delta_i$
with $\{\delta_i\}_{i=1}^\infty$ strictly decreasing to~$0$, then
$G=\bigcap_{i=1}^\infty G_i$ satisfies
\[
 \dim_HG\ge
\liminf_{n\to \infty}
\frac{\sum_{i=1}^{n-1}\log m_i}{-\log(m_{n}\delta_{n})}.
\]
On the other hand, if any interval in $\mathcal{G}_i$ has at most $M_i>1$ brothers and length at
most $L_i$ then
\[
 \dim_HG\le
\limsup_{n\to \infty}
\frac{\sum_{i=1}^{n}\log M_i}{-\log L_{n}}.
\]
\end{lemma}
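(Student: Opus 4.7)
The plan is to treat the two inequalities separately using standard fractal-geometric techniques: a direct covering argument for the upper bound, and the construction of a Frostman-type measure together with the mass distribution principle for the lower bound.

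For the upper bound I would cover $G$ by the intervals of $\mathcal{G}_n$. The hypothesis that each parent has at most $M_i$ sons in $\mathcal{G}_i$ gives $\#\mathcal{G}_n \ll \prod_{i=1}^n M_i$, while each such interval has diameter at most $L_n$. Hence
\[
 \sum_{I\in\mathcal{G}_n}(\mathrm{diam}\, I)^s \ll \Big(\prod_{i=1}^n M_i\Big) L_n^s,
\]
and for any $s$ strictly larger than $\limsup_n \sum_{i=1}^n\log M_i/(-\log L_n)$ this quantity tends to zero along the subsequence realising the limsup. We conclude $\mathcal{H}^s(G)=0$, and $\dim_H G\leq s$ follows.

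For the lower bound the key step is to define a Borel probability measure $\mu$ on $G$ by distributing mass uniformly among sons: set $\mu([0,1])=1$ and, whenever $J\in\mathcal{G}_{n-1}$ has $N(J)\geq m_n$ sons in $\mathcal{G}_n$, split $\mu(J)$ equally among them. A useful combinatorial observation is that any two distinct elements of $\mathcal{G}_i$ are separated by at least $\delta_i$: for brothers this is the hypothesis, and for non-brothers it follows from $\delta_{i-1}\geq \delta_i$ applied at the parent level. Given small $r>0$, let $n$ be the unique index with $\delta_n<2r\leq\delta_{n-1}$. For $x\in G$, the ball $B(x,r)$ meets exactly one interval of $\mathcal{G}_{n-1}$, yielding $\mu(B(x,r))\leq \prod_{i=1}^{n-1} m_i^{-1}$, and at most $\ll r/\delta_n$ sons of that interval in $\mathcal{G}_n$, yielding $\mu(B(x,r))\ll (r/\delta_n)\prod_{i=1}^{n} m_i^{-1}$.

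The main difficulty is extracting the correct exponent from these two naive bounds. The trick is the elementary inequality $\min(u,v)\leq u^{1-s}v^s$ for $s\in[0,1]$, which applied to the two estimates above produces, after simplification,
\[
 \mu(B(x,r))\ll r^s\Big(\prod_{i=1}^{n-1} m_i^{-1}\Big)(m_n\delta_n)^{-s}.
\]
For $s$ strictly less than $\liminf_n \sum_{i=1}^{n-1}\log m_i/(-\log(m_n\delta_n))$, the factor multiplying $r^s$ stays bounded as $n\to\infty$, hence $\mu(B(x,r))\leq Cr^s$ uniformly in $x\in G$ and small $r>0$. The mass distribution principle \cite{Fal} then yields $\mathcal{H}^s(G)>0$, so $\dim_H G\geq s$, and letting $s$ tend to the liminf completes the argument.
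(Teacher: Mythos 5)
Your proof is correct and is essentially the paper's argument: the upper bound is the identical covering-sum estimate, and for the lower bound the paper simply cites Example~4.6 of Falconer's book, whose proof is precisely the mass-distribution construction you reproduce (uniform splitting among sons, the two bounds on $\mu(B(x,r))$ coming from separation at levels $n-1$ and $n$, and the interpolation $\min(u,v)\le u^{1-s}v^s$ before invoking the mass distribution principle).
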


\begin{proof}
The first part is the Example 4.6 of \cite{Fal}. The second part follows from the definition of
Hausdorff measure because
\[
\sum_{I\in\mathcal{G}_n}|I|^s\le M_1M_2\cdots M_n L_n^s
\]
and if $s$ is greater than the upper limit the sum goes to~$0$ as $n\to \infty$.
\end{proof}

Note that for instance, $m_i=M_i=2$ and $L_i=3^{-i}$  for the usual Cantor set $C$, and
Lemma~\ref{cantor} implies $\dim_HC=\log_32$. 

In our construction some of the intervals (infinitely many of them) are banned. The idea is that
if we preserve a substantial part of the intervals  we should still keep the lower bound while the 
upper bound follows by inclusion.

\begin{lemma}\label{cantor2}
Let $\mathcal{G}_i$ be as in Lemma~\ref{cantor} but now we assume that any  $I\in
\mathcal{G}_i$ has at least $2m_i$ brothers.
Let $\mathcal{G}_i^*$ be as $\mathcal{G}_i$ but omitting $d_i$ intervals and define accordingly
$G_i^*=\bigcup_{I\in \mathcal{G}_i^*} I$.
If the cardinality of $\mathcal{G}_1$ is greater than $2d_1+2\sum_{i=2}^\infty d_i\prod_{j=2}^i
m_i^{-1}$ then
$G^*=\bigcap_{i=1}^\infty G_i^*\ne\emptyset$ and
\[
 \dim_HG^*\ge
\liminf_{n\to \infty}
\frac{\sum_{i=1}^{n-1}\log m_i}{-\log(m_{n}\delta_{n})}.
\]
\end{lemma}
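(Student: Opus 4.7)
The plan is to extract from $\mathcal{G}_i^*$ a subfamily $\mathcal{H}_i$ that satisfies the hypotheses of Lemma~\ref{cantor} with parameter $m_i$ (rather than the available $2m_i$), and then invoke that lemma directly. The factor two hidden in the assumption \emph{at least $2m_i$ brothers} serves as the safety margin that will absorb all the deletions at every subsequent level.

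First I would identify which intervals must be discarded by defining, backward from each fixed level $n$, a family of \emph{useless} intervals. Put $B_n^{(n)}=\mathcal{G}_n\setminus\mathcal{G}_n^*$ and, for $i<n$, let $B_i^{(n)}$ be the union of $\mathcal{G}_i\setminus\mathcal{G}_i^*$ with the set of $J\in\mathcal{G}_i^*$ having fewer than $m_{i+1}$ sons in $\mathcal{G}_{i+1}^*\setminus B_{i+1}^{(n)}$. A short induction in $n$ shows $B_i^{(n)}\subset B_i^{(n+1)}$, so $B_i=\bigcup_n B_i^{(n)}$ is well defined, and by construction every $J\in\mathcal{G}_i^*\setminus B_i$ has at least $m_{i+1}$ sons in $\mathcal{G}_{i+1}^*\setminus B_{i+1}$.

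Next I would control $|B_i|$ by a double-counting step. If $J\in\mathcal{G}_i^*$ belongs to $B_i^{(n)}$ then, since $J$ has at least $2m_{i+1}$ sons of which fewer than $m_{i+1}$ lie in $\mathcal{G}_{i+1}^*\setminus B_{i+1}^{(n)}$, at least $m_{i+1}$ sons of $J$ lie in $B_{i+1}^{(n)}$. As the sons of distinct parents are disjoint, at most $|B_{i+1}^{(n)}|/m_{i+1}$ parents can be of this kind, whence
\[
|B_i^{(n)}|\le d_i+\frac{|B_{i+1}^{(n)}|}{m_{i+1}},\qquad |B_n^{(n)}|\le d_n.
\]
Iterating and letting $n\to\infty$ gives
\[
|B_1|\le d_1+\sum_{j=2}^\infty d_j\prod_{l=2}^j m_l^{-1},
\]
which is strictly less than $|\mathcal{G}_1|/2$ by hypothesis. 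Since every $I\in\mathcal{G}_1$ has at least $2m_1$ brothers, $|\mathcal{G}_1|\ge 2m_1$, and therefore $|\mathcal{G}_1\setminus B_1|>|\mathcal{G}_1|/2\ge m_1$. I would then build $\mathcal{H}_i$ inductively: pick $m_1$ elements of $\mathcal{G}_1\setminus B_1$ as $\mathcal{H}_1$ and, given $\mathcal{H}_i\subset\mathcal{G}_i^*\setminus B_i$, assign to each $J\in\mathcal{H}_i$ some $m_{i+1}$ of its (at least $m_{i+1}$) available sons in $\mathcal{G}_{i+1}^*\setminus B_{i+1}$. Brothers in $\mathcal{H}_{i+1}$ are brothers in $\mathcal{G}_{i+1}$, so they are separated by at least $\delta_{i+1}$, and $\{\mathcal{H}_i\}$ meets the hypotheses of Lemma~\ref{cantor} with parameter $m_i$. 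Since $\bigcap_i\bigcup_{I\in\mathcal{H}_i}I\subset G^*$, this yields simultaneously $G^*\ne\emptyset$ and the claimed lower bound on $\dim_HG^*$.

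The main subtlety I would have to handle carefully is that a direct fixed-point definition of $B_i$ would be circular, each $B_i$ referring to $B_{i+1}$, which refers to $B_{i+2}$, and so on; the backward truncation at level $n$ together with the monotonicity $B_i^{(n)}\subset B_i^{(n+1)}$ is the technical device that makes the recursive count rigorous, and the numerology of the hypothesis on $|\mathcal{G}_1|$ is precisely what is needed to keep that recursion nontrivial.
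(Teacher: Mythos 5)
Your proof is correct and genuinely different in its organization from the paper's. The paper uses a \emph{one-step, level-local} filter: it defines $\mathcal{G}_i'$ as those intervals of $\mathcal{G}_i^*$ with at least $m_i$ brothers inside $\mathcal{G}_i^*$, shows $|\mathcal{G}_i'|\ge|\mathcal{G}_i|-2d_i$, and then handles the \emph{cascading} effect of new childlessness by a contradiction argument: it defines $\Delta_i$ as the level-$1$ intervals containing a childless descendant in $\mathcal{G}_i'$, sets $\mathcal{A}=\bigcap_i(\mathcal{G}_1'\setminus\Delta_i)$, and, assuming $\mathcal{A}=\emptyset$, double-counts via the label $i(I)$ to reach $|\mathcal{G}_1'|\le\sum_{j\ge2}2d_j\prod_{l=2}^jm_l^{-1}$, which contradicts the hypothesis. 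Your construction instead \emph{folds the cascade into the pruning set itself}: $B_i$ is defined so that an interval surviving $B_i$ is guaranteed, at once, to have $m_{i+1}$ surviving sons — at the price that the definition is recursive over all future levels, which you resolve by the truncation $B_i^{(n)}$ and its monotonicity in $n$. The payoff is that the count $|B_1|\le d_1+\sum_{j\ge2}d_j\prod_{l=2}^jm_l^{-1}$ comes out of a single forward iteration of $|B_i^{(n)}|\le d_i+|B_{i+1}^{(n)}|/m_{i+1}$, and the final step is a direct construction of the sub-Cantor family $\mathcal{H}_i$ feeding Lemma~\ref{cantor}, with no proof by contradiction. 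The two arguments are of comparable length; yours is more constructive and trades the paper's $\Delta_i$-bookkeeping for the well-foundedness bookkeeping of the truncation. One small point both proofs share: the claim that a $J$ landing in the pruned set must have $\ge m_{i+1}$ sons in the complement tacitly uses that \emph{every} $J\in\mathcal{G}_i$ has at least $2m_{i+1}$ sons (equivalently, that the construction has no dead branches already in $\mathcal{G}_i$). This is the standard reading of the hypothesis inherited from Lemma~\ref{cantor} and is satisfied in the paper's applications, but it is worth stating explicitly, since the literal wording ``any $I\in\mathcal{G}_{i+1}$ has at least $2m_{i+1}$ brothers'' only constrains parents that have at least one son.
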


\begin{proof}
Let $\mathcal{G}_i'$ as $\mathcal{G}_i^*$ but omitting the intervals having less than $m_i$
brothers. If $I\in\mathcal{G}_i^*\setminus \mathcal{G}_i'$, more than one half of its brothers in
$\mathcal{G}_i$ are not in $\mathcal{G}_i^*$, hence 
$|\mathcal{G}_i^*\setminus \mathcal{G}_i'|\le |\mathcal{G}_i\setminus \mathcal{G}_i^*|$, in
particular $|\mathcal{G}_i'|\ge 2|\mathcal{G}_i^*|-|\mathcal{G}_i|\ge |\mathcal{G}_i|- 2d_i$ and
$\mathcal{G}_1'\ne \emptyset$ under the hypothesis of the lemma.
Clearly any interval in  $\mathcal{G}_i'$ has either at least $m_i$ sons or none. This latter
possibility impedes a direct application of Lemma~\ref{cantor}.

Let $\Delta_i=\big\{I\in \mathcal{G}_1'\; :\; \exists J\in\mathcal{G}_i'
\text{ with no sons and }J\subset I\big\}$.
If $\mathcal{A}=\bigcap_{i=1}^\infty \big(\mathcal{G}_1'\setminus\Delta_i\big)$ is non empty then
the sets $\big\{J\in \mathcal{G}_i'\; :\; J\subset I\in \mathcal{A}\big\}$ are also non empty
(note that $\mathcal{G}_i'=\emptyset$ implies $\Delta_{i-1}=\mathcal{G}_1'$) and Lemma~\ref{cantor}
gives the result.

It remains to show that $\mathcal{A}=\emptyset$ leads to a contradiction.  If
$\mathcal{A}=\emptyset$ then we can assign to each $I\in \mathcal{G}_1'$ the smallest index $i=i(I)$
such that $I\in\Delta_i$.
There are at most $|\mathcal{G}_2\setminus\mathcal{G}_2'|/m_2\le 2d_2/m_2$ intervals with $i(I)=1$
because each interval in $\mathcal{G}_1'\setminus\Delta_1$ has at least $m_2$ sons. In the same way,
there are at most $|\mathcal{G}_3\setminus\mathcal{G}_3'|/m_2m_3\le 2d_3/m_2m_3$ intervals with
$i(I)=2$ because each interval in $\big(\mathcal{G}_1'\setminus\Delta_1\big)\cap
\big(\mathcal{G}_1'\setminus\Delta_2\big)$ has at least $m_2m_3$ grandsons (sons of sons). In
general, considering all possible values of $i(I)$, we have
\[
|\mathcal{G}_1'|\le 
\frac{2d_2}{m_2}+\frac{2d_3}{m_2m_3}+\frac{2d_4}{m_2m_3m_4}+\dots
\]
which contradicts our hypothesis (recall that $|\mathcal{G}_1'|\ge |\mathcal{G}_1|-2d_i$).
\end{proof}

\section{Proof of the main theorem}

The lower bound for the spectrum of singularities comes from the construction of a set with
Hausdorff dimension $(\nu_0+2)/2r$ and $\beta_F(x)=(\alpha-1)/k+1/2r$ for every~$x$ in this set.

The proof is clearer  in the case $\nu_F=1$ that we present firstly.

\begin{proposition}\label{set_unram}
With the notation in main theorem, if $\nu_F=1$,  given any $r>k$ there exists a set $C_r$
such that $\dim_HC_r=2/r$ and $\beta_F(x)=(\alpha-1)/k+1/2r$ for every $x\in C_r$.
\end{proposition}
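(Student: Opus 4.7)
The plan is to construct $C_r$ as a Cantor-like set whose points are approximated at infinitely many scales by fractions $a/p$ with $p$ prime (so $q=p$ since $\nu_F=1$) and $|\tau_{0}(a,p)|\ge\sqrt{p}/2$, with each approximation at distance of order $p^{-r}$. The arithmetic key is Proposition~\ref{asymp}: when $|\tau_0|\asymp q^{1/2}$ and $|h|\asymp q^{-r}$, the main term is $\sim q^{-1/2}|h|^{(\alpha-1)/k}\asymp |h|^{(\alpha-1)/k+1/(2r)}$, and precisely because $r>k$ the error $|h|^{\alpha/k}q^{1/2}\asymp |h|^{\alpha/k-1/(2r)}$ is of smaller order. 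This will pin down $\beta_F(x)$ to the advertised value on $C_r$.

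Concretely, I fix a rapidly growing sequence of scales $Q_1<Q_2<\dots$ (for instance $Q_{i+1}=Q_i^K$ with $K$ large). At stage $i$, inside every surviving interval $I\in\mathcal{G}_{i-1}$, I list the fractions $a/q$ with $q$ prime in $[Q_i,2Q_i)$ and apply, in order, Lemma~\ref{lbtau} (to keep a positive proportion with $|\tau_0|\ge\sqrt{q}/2$), Lemma~\ref{spacing} (to extract a well-spaced subfamily of comparable size), and Proposition~\ref{cheby} (to remove the exceptional set $C_{q,\widetilde q}(\epsilon)$ where the oscillation bound at smaller scales fails). The discarded proportion is $\ll Q_i^{-c}$ for some $c>0$ and thus fits in the budget of omitted intervals allowed by Lemma~\ref{cantor2}. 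Around each surviving $a/q$ I place a closed interval of length $\kappa q^{-r}$ with a small constant $\kappa$, producing $\mathcal{G}_i$.

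The dimension lower bound follows from Lemma~\ref{cantor2}: each parent has $m_i\asymp Q_i^2 Q_{i-1}^{-r}/\log Q_i$ sons separated by gaps $\delta_i\asymp Q_i^{-r}$, so $m_i\delta_i\asymp Q_i^{2-r}/\log Q_i$ and a short Stolz--Ces\`aro computation yields $\dim_H C_r\ge 2/r$. The matching upper bound is automatic since $C_r$ is contained in the set of $r$-well-approximable numbers, of Hausdorff dimension $2/r$ by Jarn\'{\i}k's theorem. For the pointwise H\"older exponent at $x\in C_r$ with selected approximations $a_n/q_n$, the upper bound $\beta_F(x)\le(\alpha-1)/k+1/(2r)$ follows by plugging the specific sequence $h_n=a_n/q_n-x$ into Proposition~\ref{asymp}. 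For the lower bound, given small $h$ I choose $n$ with $q_n^{-r}\asymp\max(|h|,|x-a_n/q_n|)$, decompose
\[
F(x+h)-F(x)=\bigl[F(a_n/q_n+h')-F(a_n/q_n)\bigr]-\bigl[F(a_n/q_n+h'')-F(a_n/q_n)\bigr]
\]
with $h',h''=O(|h|)$, and apply the bound of Proposition~\ref{cheby} to each bracket; this is exactly what the elimination of $C_{q,\widetilde q}(\epsilon)$ during the construction was designed to permit.

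The main obstacle is the joint calibration of parameters. Proposition~\ref{cheby} requires $q\ge\widetilde q^{2r(2+\epsilon^{-1})}$, Lemma~\ref{cantor2} caps the total mass of omitted intervals, and Lemma~\ref{lbtau} needs $Q|I|^{2+\epsilon}>C$ at every stage. Choosing a single growth rate for $Q_i$ that reconciles these constraints and simultaneously keeps the Stolz--Ces\`aro ratio converging to $2/r$ is the delicate step; once the calibration is fixed, the remaining estimates are routine applications of the lemmata already established.
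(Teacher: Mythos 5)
Your construction follows the paper's strategy for the $\nu_F=1$ case quite closely: intervals of length $\asymp q^{-r}$ around fractions $a/q$ with $q$ prime and $|\tau_0|\ge\sqrt q/2$, selected via Lemma~\ref{lbtau}, pruned via Proposition~\ref{cheby}, assembled with Lemma~\ref{cantor2}, and two-sided control of $\beta_F$ from Proposition~\ref{asymp} and the excision of $C_{q,\widetilde q}$. But the dimension count as you wrote it is wrong and would not give $2/r$.

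You claim brothers are ``separated by gaps $\delta_i\asymp Q_i^{-r}$'' and that $m_i\delta_i\asymp Q_i^{2-r}/\log Q_i$. The scale $Q_i^{-r}$ is the \emph{length} of each interval $I_{a/q}$, not the separation between them. Distinct fractions $a/q\ne a'/q'$ with $Q_i\le q,q'<2Q_i$ satisfy $|a/q-a'/q'|\ge 1/(qq')>1/(4Q_i^2)$, so the true gap is $\delta_i\asymp Q_i^{-2}$, which dwarfs $Q_i^{-r}$ since $r>k\ge 2$. (You also dropped the factor $Q_{i-1}^{-r}$ in the product: correctly $m_i\delta_i\asymp Q_{i-1}^{-r}/\log Q_i$, essentially the parent's length up to a logarithm.) With the correct $\delta_i$ and rapidly growing $Q_i$ the ratio in Lemma~\ref{cantor} is
\[
\frac{\log m_{n-1}}{-\log(m_n\delta_n)}\sim \frac{2\log Q_{n-1}}{r\log Q_{n-1}}=\frac 2r,
\]
whereas with your $\delta_i\asymp Q_i^{-r}$ the denominator acquires the dominant extra term $(r-2)\log Q_n$ and the liminf is strictly less than $2/r$; for your suggested $Q_{i+1}=Q_i^K$ it works out to $\frac{2K-r}{K[(r-2)K+r]}$, which tends to $0$ as $K\to\infty$. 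So the ``short Stolz--Ces\`aro computation'' you gesture at would not close the argument.

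A second, smaller point: Lemma~\ref{spacing} is stated for prime-power denominators $p^n$ with a fixed integer $n>2$, so it cannot be invoked when $q$ is prime, and it is not needed here --- the $Q_i^{-2}$ spacing comes for free from the separation of fractions. The paper reserves Lemma~\ref{spacing} for the $\nu_F>1$ case (Proposition~\ref{set_ram}), where the spacing of $a/p^{\nu_F+1}$ is genuinely nontrivial.
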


\begin{proof}
For each irreducible fraction $a/q$ we consider the closed interval
\[
 I_{a/q}=\Big[\frac{a}{q}+\frac{1}{4q^r}, \frac{a}{q}+\frac{1}{2q^r}\Big].
\]
In our case $\nu_F=1$ we assume that $q$ is prime (recall the comments at the beginning of \S3).
Note that this intervals are disjoint.

We are going to construct nonempty sets of  disjoint
intervals $\mathcal{G}_i$ fulfilling the hypothesis in Lemma~\ref{cantor2}.
We take $\mathcal{G}_0=\{[0,1]\}$ and 
\[
\mathcal{G}_i=\big\{
I_{a/q}\subset J\in \mathcal{G}_{i-1}\;:\; Q_i\le q< 2Q_i,\ |\tau_{0}|\ge \frac 12\sqrt{q}
\big\},
\qquad 
i=1,2,3,\dots
\]
where $Q_i=K^{(Ri)!}$ with $R=\lceil 5r\rceil$ and $K$ a large enough constant.
Note that with the notation introduced before Proposition~\ref{osc3} if 
$I_{a/q}\subset I_{\widetilde{a}/\widetilde{q}}$
with 
$I_{a/q}\in\mathcal{G}_i$ and $I_{\widetilde{a}/\widetilde{q}}\in\mathcal{G}_{i-1}$
then $a\in A\big(\widetilde{q}^r\big)$.

The brothers of a given $I_{a/q}\in\mathcal{G}_i$ are spaced at least $\delta_i=\frac 18Q_i^{-2}$
because $|a/q-a'/q'|>1/qq'$. And the number of brothers $m_i$ satisfies
\begin{equation}\label{brothers}
\frac{Q_i^2Q_{i-1}^{-r}}{C\log Q_i}
<m_i<
C\frac{Q_i^2Q_{i-1}^{-r}}{\log Q_i}
\end{equation}
for certain $C>0$, just applying Lemma~\ref{lbtau} with $I$ the interval of $\mathcal{G}_{i-1}$
containing all the brothers of $I_{a/q}$.

Now we consider $\mathcal{G}_1^*=\mathcal{G}_1$, $\mathcal{G}_2^*=\mathcal{G}_2$ and for $i>2$, with
the notation of Proposition~\ref{cheby},
\[
\mathcal{G}_i^*=
\Big\{
I_{a/q}\in \mathcal{G}_{i}\;:\; I_{a/q}\subset I_{\widetilde{a}/\widetilde{q}} \in
\mathcal{G}_{i-1}\text{ with }
a\not\in C_{q,\widetilde{q}}(4/i)
\Big\}.
\]
The definition of $Q_i$ implies $Q_i\ge (2Q_i)^{2r(2+i/4)}$ as required in Proposition~\ref{cheby} for $Q_i\le q<2Q_i$ and $Q_{i-1}\le \widetilde{q}<2Q_{i-1}$. Then
for $d_i=|\mathcal{G}_{i}|-|\mathcal{G}_{i}^*|$ we have
\[
d_i\ll Q_{i-1}^{-8/i}(\log Q_i)
\sum_{q\asymp Q_i}
\sum_{\widetilde{q}\asymp Q_{i-1}}
\big|A\big(\widetilde{q}^r\big)\big|
\ll
Q_{i-1}^{-8/i}Q_{i-1}^{2-r}Q_i^2(\log Q_{i-1})^{-1}.
\]
Note that the cardinality of $A\big(\widetilde{q}^r\big)$ is $O\big(\widetilde{q}^{1-r}q\big)$
because fixed one of the $O(\widetilde{q})$ possible values of the integral part of
$a\widetilde{q}/q$, there are $O\big(\widetilde{q}^{-r}q\big)$ possibilities for $a$.

Hence \eqref{brothers} implies
\[
\frac{d_i}{m_2m_3\cdots m_i}
\ll C^i
Q_{i-1}^{-8/i}Q_1^{2}\frac{\log Q_i}{\log Q_1}
\prod_{j=1}^{i-2}
\big(Q_j^{r-2}\log Q_j\big)
\]
and this is $\ll Q_{i-1}^{-1/i}$ and the condition in Lemma~\ref{cantor2} is fulfilled for $K$ large
enough, giving
\[
 \dim_HG^*\ge
\liminf_{n\to \infty}
\frac{\log m_{n-1}}{-\log(m_{n}\delta_{n})}
\ge \frac{\log\big(Q_{n-1}^{2}Q_{n-2}^{-r}\big)}{-\log Q_{n-1}^{-r}}
=\frac 2r.
\]
On the other hand, by Lemma~\ref{cantor}
\[
 \dim_HG^*\le
 \dim_HG\le
\limsup_{n\to \infty}
\frac{\log Q_n^2 }{-\log Q_n^{-r}}
=\frac 2r.
\]

We take $C_r=G^*$. It remains to check $\beta_F(x)=(\alpha-1)/k+1/2r$ for every~$x\in G^*$.

Given $x\in G^*$, take $h_i=x-a_i/q_i>0$ where $x\in I_{a_i/q_i}\in\mathcal{G}_i^*$.
Proposition~\ref{asymp} gives
\[
|F(x-h_i)-F(x)|=\big|F(\frac{a_i}{q_i}+h_i)-F(\frac{a_i}{q_i})\big|
\gg h_i^{(\alpha-1)/k+1/2r}
\]
where we have employed $h_i\asymp q_i^{-r}$ and $|\tau_0|\gg\sqrt{q}$ by the definition of
$\mathcal{G}_i$. Hence,  $\beta_F(x)\le (\alpha-1)/k+1/2r$.

On the other hand, given $x\in G^*$ and $h$ small enough, we can find
$I_{\widetilde{a}/\widetilde{q}}\supset I_{a/q}\ni x$ with $I_{a/q}\in \mathcal{G}_{i}$ and
$I_{\widetilde{a}/\widetilde{q}} \in \mathcal{G}_{i-1}$, and $q^{-r}\le |h|<\widetilde{q}^{-r}$.
Then $\frac{1}{2}q^{-r}\le |x+h-a/q|<2\widetilde{q}^{-r}$ and by the definition of
$\mathcal{G}_i^*$,
\[
|F(x+h)-F(x)|\le 
\big|F(x+h)-F(\frac{a}{q})\big|
+\big|F(x)-F(\frac{a}{q})\big|
\ll |h|^{-4/i} |h|^{(\alpha-1)/k+1/2r}
\]
so letting $i\to\infty$, allows to conclude
$\beta_F(x)\ge (\alpha-1)/k+1/2r$.
\end{proof}

In general terms the proof in the case $\nu_F>1$ parallels the previous proof but there are some
problems when adapting it. Firstly, the spacing of fractions with denominator given by a perfect
power is a difficult issue (even for small exponents, see for instance \cite{zaha}) that we avoid
using Lemma~\ref{spacing} to work in a well-spaced subset. 
Secondly, now the role of $\tau_0$ is, in some way, played by $\tau_*$, a very short exponential sum
depending on the ramification modulo $p$. An algebraic approach seems unfeasible and we instead
appeal to Lemma~\ref{turan}, a variant of Tur\'{a}n method, to get an lower bound.
Finally, a requirement in Proposition~\ref{pointw} to introduce $\tau_*$ is the equality between
\lq\lq local'' and \lq\lq global'' ramification. To fulfill it we appeal to a weak form of
Chebotarev's theorem due to Frobenius \cite{chebotarev} that already appeared indirectly in the
case $\nu_F=1$ through the application of \cite[Th 1.3]{chajim} in the proof of Lemma~\ref{lbtau}.

\begin{proposition}\label{set_ram}
With the notation in main theorem, if $\nu_F>1$,  given any $s>k$ there exists a set $C_s$
such that $\dim_HC_s=(\nu_F+2)/2s$ and $\beta_F(x)=(\alpha-1)/k+1/2s$ for every $x\in C_s$.
\end{proposition}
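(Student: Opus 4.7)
The plan is to mirror the proof of Proposition~\ref{set_unram} with four modifications forced by $\nu_F>1$. Set $r=2s/(\nu_F+1)$ and take $I_{a/q}=[a/q+\tfrac14 q^{-r},a/q+\tfrac12 q^{-r}]$ with $q=p^{\nu_F+1}$ for $p$ prime, and $Q_i=K^{(Ri)!}$ as before. The role of Proposition~\ref{asymp} is played by Proposition~\ref{pointw}, whose hypothesis $h<p^{-2k}$ is guaranteed by $s>k$; the resulting main term $p^{-1}\tau_* A(c_0 h)^{(\alpha-1)/k}$ has size $|\tau_*|\,h^{(\alpha-1)/k+1/2s}$ using $p=q^{1/(\nu_F+1)}\sim h^{-1/2s}$, which is exactly the target H\"older exponent.

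Four new ingredients handle the complications relative to the unramified case. First, since $\tau_*$ is a very short sum ($|\mathcal{B}|\le\deg P'$ terms) for which Weil-type bounds do not help, I would use Lemma~\ref{turan} to secure $|\tau_*|\ge(|I|/100)^{|\mathcal{B}|}$ for a positive proportion of $a$'s in any parent interval $I\in\mathcal G_{i-1}$; the factorial growth of $Q_i$ makes this lower bound still polynomially larger than the error $h^{\gamma(\alpha-1)/k}$ in Proposition~\ref{pointw}. Second, the hypothesis $\nu_{F,p}=\nu_F$ is a splitting condition on $p$, which holds for a positive density of primes by a Chebotarev-type theorem of Frobenius \cite{chebotarev}. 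Third, the $q^{1-\gamma}$ exceptional $a$'s in Proposition~\ref{pointw} are discarded as in the unramified case. Fourth, since the fractions $a/p^{\nu_F+1}$ are not uniformly spaced, I would apply Lemma~\ref{spacing} to extract a positive-proportion subset whose elements are separated by $\delta_i\asymp Q_i^{-(\nu_F+2)/(\nu_F+1)}$, reflecting that the total count $|X_0|$ is the number of fractions in the parent interval times the prime count $\asymp Q_i^{1/(\nu_F+1)}/\log Q_i$.

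With $\mathcal G_i$ consisting of the $I_{a/q}\subset J\in\mathcal G_{i-1}$ passing all four tests and $\mathcal G_i^*$ further excising $a\in C_{q,\widetilde q}(4/i)$ via Proposition~\ref{cheby}, the brother count is $m_i\asymp Q_i^{(\nu_F+2)/(\nu_F+1)}Q_{i-1}^{-r}/(\log Q_i)^2$ and $m_i\delta_i\asymp Q_{i-1}^{-r}$. The bound on the removed count $d_i$ carries over essentially verbatim from Proposition~\ref{set_unram}, and Lemma~\ref{cantor2} yields
\[
\dim_H C_s=\liminf_{n\to\infty}\frac{\log m_{n-1}}{-\log(m_n\delta_n)}=\frac{\nu_F+2}{(\nu_F+1)r}=\frac{\nu_F+2}{2s},
\]
with the matching upper bound from Lemma~\ref{cantor}. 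The H\"older equality on $C_s$ follows the unramified template: at $h_i=x-a_i/q_i$, Proposition~\ref{pointw} combined with the Tur\'an lower bound on $|\tau_*|$ gives $|F(x)-F(x-h_i)|\gg h_i^{(\alpha-1)/k+1/2s}$ (the complement $F-F^*$ is dominated via Proposition~\ref{osc1} through the $C_{q,\widetilde q}$ exclusion), while for arbitrary $h$, Proposition~\ref{cheby} applied to $F$ gives the reverse inequality. The main obstacle is coordinating the simultaneous positive-density conditions on $(p,a)$ — Frobenius ramification, Tur\'an-largeness of $\tau_*$, non-exceptionality under Propositions~\ref{pointw} and~\ref{cheby}, and Lemma~\ref{spacing} spacing — while keeping the logarithmic factors compatible with Lemma~\ref{cantor2}'s summability hypothesis; fortunately each condition removes only a constant proportion or a subpolynomial power of $q$, so their intersection retains a positive proportion for $Q_i$ large, which is automatic under the factorial growth.
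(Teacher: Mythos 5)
Your proposal correctly assembles all the structural ingredients of the paper's argument — Proposition~\ref{pointw} in place of Proposition~\ref{asymp}, Lemma~\ref{turan} to lower-bound the short sum $\tau_*$, the Frobenius density theorem to secure $\nu_{F,p}=\nu_F$ for a positive proportion of primes, and Lemma~\ref{spacing} to handle the irregular distribution of $\{a/p^{\nu_F+1}\}$ — and the dimension arithmetic correctly lands at $(\nu_F+2)/2s$.

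There is, however, a genuine gap in the H\"older upper bound. You claim the complement $F_*=F-F^*$ is ``dominated via Proposition~\ref{osc1} through the $C_{q,\widetilde q}$ exclusion,'' but the set $C_{q,\widetilde q}(\epsilon)$ of Proposition~\ref{cheby} controls the \emph{full} function $F$, not the piece $F_*$. Knowing $|F(a/q+h)-F(a/q)|\ll h^{(\alpha-1)/k+1/2s-\epsilon}$ together with the Proposition~\ref{pointw}/Tur\'an lower bound $|F^*(a/q+h_i)-F^*(a/q)|\gg h_i^{(\alpha-1)/k+1/2s+o(1)}$ gives no control on $|F_*|$, since both quantities are of the same order of magnitude; one cannot deduce $|F(x-h_i)-F(x)|\gg h_i^{(\alpha-1)/k+1/2s+o(1)}$ from this alone. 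What is actually needed is a \emph{strictly} smaller upper bound for $F_*$ at the scale $h\asymp q^{-r}$, namely $|F_*(a/q+h)-F_*(a/q)|\ll h^{(\alpha-1)/k}(h^{1/2k}+h^{1/2r})h^{-\epsilon}$, which beats $h^{(\alpha-1)/k+1/2s}$ because $s>\max(k,r)$ (using $r=2s/(\nu_F+1)\le 2s/3$). This forces a \emph{second}, separate exclusion set
\[
B_q(\epsilon)=\Big\{a\le q:\ \sup_{q^{-r}/4<h<q^{-r}/2}\frac{|F_*(a/q+h)-F_*(a/q)|}{h^{(\alpha-1)/k}(h^{1/2k}+h^{1/2r})h^{-\epsilon}}>1\Big\},
\]
obtained directly from Proposition~\ref{osc1} and Chebyshev, with $|B_q(\epsilon)|\ll q^{1-2\epsilon r}$; the correct definition of $\mathcal G_i^*$ removes $a\in B_q(4/i)\cup C_{q,\widetilde q}(4/i)$. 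The extra contribution to $d_i$ from $B_q$ is negligible under the factorial growth of $Q_i$, so the Cantor-set machinery of Lemma~\ref{cantor2} is unaffected — but without the $B_q$ excision the proof of $\beta_F(x)\le(\alpha-1)/k+1/2s$ does not close.
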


\begin{proof}
We consider, as before, for each irreducible fraction $a/q$ the closed interval
\[
I_{a/q}=
\Big[\frac{a}{q}+\frac{1}{4q^r}, \frac{a}{q}+\frac{1}{2q^r}\Big]
\qquad\text{with }\quad
r=\frac{2s}{\nu_F+1}.
\]
Recall that now $q=p^{\nu_F+1}$ with $p$ prime, according to the notation introduced in
\eqref{q_qtilde}.

We take $\mathcal{G}_0=\{[0,1]\}$ and 
\[
\mathcal{G}_i=\big\{
I_{a/q}\subset J\in \mathcal{G}_{i-1}\;:\; {a}/{q}\in S_i(J)
\big\},
\qquad 
i=1,2,3,\dots
\]
where $S_i(J)$ is a set of irreducible fractions $a/q\in J$ such that the following conditions are
fulfilled

\begin{enumerate}
\renewcommand{\labelenumi}{(\roman{enumi})} 
\item$Q_i\le q=p^{\nu_F+1}<2Q_i$ where $Q_i=K^{(Ri)!}$ with $R=\lceil 5r\rceil$ and $K$ a large
enough constant, as in the proof of Proposition~\ref{set_unram}, and $P'$ splits in $\F_p[x]$.
\item
Proposition~\ref{pointw} holds with $|\tau_*|\ge (400Q_{i-1}^r)^{-|\mathcal{B}|}$ for $i>2$.
\item
If $x,y\in S_i(J)$ are distinct then $|x-y|\ge (2Q_i)^{-1-1/(\nu_F+1)}$.
\item
$|S_i(J)|\gg Q_{i-1}^{-r} Q_i^{1+1/(\nu_F+1)}/\log Q_i$.
\end{enumerate}

\medskip

These requirements make the difference with respect to the proof in the case $\nu_F=1$ and once the
existence of $S_i(J)$ having these properties is assured, we shall be able to adapt the rest of the
proof.

Firstly, by a weak form of Chebotarev's theorem (see the theorem of Frobenius in \cite{chebotarev}),
the polynomial $P'$ splits  in $\F_p[x]$ for a positive density of the primes. Hence, by the prime
number theorem, the primes $p$ considered in (i) are at least $CQ_i^{1/(\nu_F+1)}/\log Q_i$ with
$C>0$. For any of these primes we have clearly $\nu_{F,p}=\nu_F$ with the notation of
Proposition~\ref{pointw} and its hypotheses are fulfilled. Disregarding a negligible amount of the
primes considered in~(i), the formula in that proposition holds. On the other hand, by
Lemma~\ref{turan} with $I=J$ and $\{\lambda_1,\lambda_2,\dots,\lambda_N\}=\mathcal{B}$, given $q$ we
have $|\tau_*|\ge (|J|/100)^{|\mathcal{B}|}$ for more than $C'Q_i|J|\gg Q_iQ_{i-1}^{-r}$ values of $a$ (note
that $|J|\gg Q_{i-1}^{-r}$ because $J\in \mathcal{G}_{i-1}$).

To sum up, we have got a set of fractions $a/q\in J$ of cardinality comparable to $Q_{i-1}^{-r}
Q_i^{1+1/(\nu_F+1)}/\log Q_i$ such that the formula for $F^*(a/q+h)-F^*(a/q)$ in
Proposition~\ref{pointw} applies with $|\tau_*|\ge (400Q_{i-1}^{-r})^{-|\mathcal{B}|}$ for $i\ge 2$. By
Proposition~\ref{spacing}, we can extract a subset $S_i(J)$ of comparable cardinality, as claimed in
(iv), satisfying the spacing property (iii), in fact we could save an extra logarithm.

\medskip

Now we start our adaptation of the proof of Proposition~\ref{set_unram}.

\smallskip

By the construction of $S_i(J)$, especially note the spacing property (iii), the number $m_i$ of
brothers of a given $I_{a/q}\in \mathcal{G}_i$ satisfies
\begin{equation}\label{brothers_r}
\frac{Q_i^{1+1/(\nu_F+1)}}{CQ_{i-1}^{r}\log Q_i}
<m_i<
C\frac{Q_i^{1+1/(\nu_F+1)}}{Q_{i-1}^{r}\log Q_i}
\qquad\text{for certain $C>0$,}
\end{equation}
and the distance between each couple of brothers is greater than $\delta_i=\frac
14Q_i^{-1-1/(\nu_F+1)}$. On the other hand, by Proposition~\ref{osc1} we have for
\[
 B_q(\epsilon)=\{a\le q: \sup_{q^{-r}/4<h<q^{-r}/2} \frac{|F_*(a/q+h)-F_*(a/q)|}{h^{(\alpha-1)/k}  (h^{1/2k}+h^{1/2r}) h^{-\epsilon}  }>1 \}
\]
the bound $|B_q(\epsilon)|\ll q^{1-2\epsilon r}$.

Consider, as in the case $\nu_F=1$, $\mathcal{G}_1^*=\mathcal{G}_1$, $\mathcal{G}_2^*=\mathcal{G}_2$
and for $i>2$, with
the notation of Proposition~\ref{cheby},
\[
\mathcal{G}_i^*=
\Big\{
I_{a/q}\in \mathcal{G}_{i}\;:\; I_{a/q}\subset I_{\widetilde{a}/\widetilde{q}} \in
\mathcal{G}_{i-1}\text{ with }
a\not\in B_q(4/i)\cup C_{q,\widetilde{q}}(4/i)
\Big\}.
\]
In this case, by Proposition~\ref{cheby} and the bound we just gave for $B_q(\epsilon)$ (due to the rapid growth of $Q_i$), we have for $d_i=|\mathcal{G}_{i}|-|\mathcal{G}_{i}^*|$,
\[
d_i\ll Q_{i-1}^{-8/i}(\log Q_i)
\sum_{q\asymp Q_i}
\sum_{\widetilde{q}\asymp Q_{i-1}}
\big|A\big(\widetilde{q}^r\big)\big|
\]
where $q$ and $\widetilde{q}$ are the denominators as in (i) appearing in the fractions of $S_i(J)$,
$J\in \mathcal{G}_{i-1}$ and in those of $S_{i-1}(J)$, $J\in \mathcal{G}_{i-2}$, respectively. Then,
as we have already seen,  $q$ and $\widetilde{q}$ runs over sets of cardinality
$Q_i^{1/(\nu_F+1)}/\log Q_i$ and $Q_{i-1}^{1/(\nu_F+1)}/\log Q_i$. We have
\[
d_i\ll 
Q_{i-1}^{-8/i-r}
\big(Q_{i-1}Q_i\big)^{1+1/(\nu_F+1)}(\log Q_{i-1})^{-1}.
\]
By \eqref{brothers_r},
\[
\frac{d_i}{m_2m_3\cdots m_i}
\ll (2C)^i
Q_{i-1}^{-8/i}Q_1^{1+1/(\nu_F+1)}\frac{\log Q_i}{\log Q_1}
\prod_{j=1}^{i-2}
\big(Q_j^{r-1-1/(\nu_F+1)}\log Q_j\big).
\]
We are under the conditions of Lemma~\ref{cantor2} with $\delta_n=(2Q_n)^{-1-1/(\nu_F+1)}$ by~(iii),
and then, recalling the notation $r={2s}/(\nu_F+1)$ and  \eqref{brothers_r},
\[
 \dim_HG^*\ge
\liminf_{n\to \infty}
\frac{\log m_{n-1}}{-\log(m_{n}\delta_{n})}
\ge 
\frac{\log\big(Q_{n-1}^{(\nu_F+2)/(\nu_F+1)}Q_{n-2}^{-r}\big)}{-\log Q_{n-1}^{-r}}
=\frac{\nu_F+2}{2s}.
\]
On the other hand, by Lemma~\ref{cantor}, again by \eqref{brothers_r},
\[
 \dim_HG^*\le
 \dim_HG
 \le
\limsup_{n\to \infty}
\frac{\log Q_n^{1+1/(\nu_F+1)}Q_{n-1}^{-r} }{-\log Q_n^{-r}}
=\frac{\nu_F+2}{2s}.
\]

\medskip

If we check $\beta_F(x)=(\alpha-1)/k+1/2s$ for every~$x\in G^*$ then we can take $C_s=G^*$ to finish
the proof. This is done as in the case $\nu_F=1$ (end of Proposition~\ref{set_unram}) but appealing
to Proposition~\ref{pointw} instead of Proposition~\ref{asymp}.
Namely, given $x\in C_s=G^*$, we have  $x\in I_{a_i/q_i}\in\mathcal{G}_i^*$ and taking
$h_i=x-a_i/q_i$, Proposition~\ref{pointw} gives for $i$ large
\[
|F^*(x-h_i)-F^*(x)|=\big|F^*(\frac{a_i}{q_i}+h_i)-F^*(\frac{a_i}{q_i})\big|
\gg \frac{|\tau_*|}{p} h_i^{(\alpha-1)/k}
\]
where $p=q_i^{1/(\nu_F+1)}\asymp h_i^{-1/r(\nu_F+1)}$ and 
$F^*$ is defined modulo $p$. Moreover for $F_*=F-F^*$, since $a_i\not\in B_{q_i}(4/i)$, we have 
\[
 |F_*(x-h_i)-F_*(x)|=\big|F_*(\frac{a_i}{q_i}+h_i)-F_*(\frac{a_i}{q_i})\big|
\ll h_i^{(\alpha-1)/k} (h_i^{1/2k}+h_i^{1/2r}) h_i^{-4/i}.
\] 
According to (i) and (ii), $|\tau_*|>
Ch_i^{\epsilon_i}$ with $\epsilon_i\to 0$ as $i\to\infty$.  
Hence, since we are in the range $r(\nu_F+1)>2k$, $\beta_F(x)\le (\alpha-1)/k+1/r(\nu_F+1) =(\alpha-1)/k+1/2s$.

The proof of the lower bound  $\beta_F(x)\ge (\alpha-1)/k+1/2s$ is identical to that in the case
$\nu_F=1$ (see the last lines in  the proof of Proposition~\ref{set_unram}).
\end{proof}

\begin{proof}[Proof of Theorem~\ref{mainth}]
Write $\beta=1/2r$ with $r>k$. Then
\[
 d_F\big(\frac{\alpha-1}{k}+\beta\big)
=
\dim_H\big\{x\; :\; \beta_F(x)=\frac{\alpha-1}{k}+\frac 1{2r}\big\}
\ge \dim_H C_r
\]
where $C_r$ is the set in Proposition~\ref{set_unram} or Proposition~\ref{set_ram},  that verifies
$\dim_HC_r=(\nu_0+2)/2r$.
\end{proof}

\bigskip

For the proof of Theorem~\ref{mainth_upper} we employ an observation about continued fractions
related to the sets considered by  Jarn\'{\i}k
\[
E_r=\Big(\big\{x\;:\;
\big|c_0x-\frac aq\big|<q^{-r}
\text{ for infinitely many $\dfrac aq\in\Q$}\big\}\cup \Q\Big)\cap [0,1].
\]

\begin{lemma}\label{contfrac}
If $x\not\in E_r\cup\Q$, $r>2$, and $\{a_n/q_n\}_{n=1}^\infty$ are the convergents of~$x$, then for
$n$ large enough
\[
q_n^{-r}\le
\big|x-\frac{a_n}{q_n}\big|
<
\big|x-\frac{a_{n-1}}{q_{n-1}}\big|
<
q_n^{-r'}
\]
where $r$ and $r'$ are H\"older conjugate, i.e., $1/r+1/r'=1$.
\end{lemma}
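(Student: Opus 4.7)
The plan is a short three-step argument combining the classical two-sided bound for convergent errors with the avoidance hypothesis $x \notin E_r$. The statement is the conjunction of three inequalities, which I would handle in order.

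First I would translate the hypothesis into its usable form. Since $x \notin E_r$, only finitely many irreducible fractions $a/q$ satisfy $|c_0 x - a/q| < q^{-r}$; working either with the convergents of $c_0 x$ or by absorbing the integer factor $c_0$ (the denominators change only by a factor $\gcd(c_0,q_n) \le c_0$, which is eaten up by taking $n$ large), this yields an index $n_0$ such that $|x - a_n/q_n| \ge q_n^{-r}$ for every $n \ge n_0$, recalling that each convergent is already in lowest terms. That is the leftmost inequality. The middle inequality $|x - a_n/q_n| < |x - a_{n-1}/q_{n-1}|$ is the classical monotonicity of convergent errors, valid for every irrational with no extra hypothesis.

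For the rightmost inequality I would combine the lower bound just established with the standard two-sided estimate
\[
\frac{1}{q_n(q_n + q_{n+1})} < \Big|x - \frac{a_n}{q_n}\Big| < \frac{1}{q_n q_{n+1}}.
\]
Its upper half together with $q_n^{-r} \le |x - a_n/q_n|$ forces $q_{n+1} \le q_n^{r-1}$, and after shifting the index $n \mapsto n-1$ this reads $q_{n-1} \ge q_n^{1/(r-1)}$. Plugging this back into the convergent estimate at index $n-1$,
\[
\Big|x - \frac{a_{n-1}}{q_{n-1}}\Big| < \frac{1}{q_{n-1}\, q_n} \le q_n^{-1 - 1/(r-1)} = q_n^{-r'},
\]
with $r' = r/(r-1)$ the H\"older conjugate of $r$, which is exactly the required bound.

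There is no genuine obstacle: once the Jarn\'{\i}k-type hypothesis has been converted into the lower bound $|x - a_n/q_n| \ge q_n^{-r}$ for large $n$, the rest is a one-line manipulation of the classical convergent estimates. The only mildly delicate point is the passage from $c_0 x$ to $x$ in the first step, but since $c_0$ is a fixed positive integer it disturbs the inequality only by a multiplicative constant that is absorbed by enlarging $n_0$.
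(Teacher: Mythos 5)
Your argument is correct and follows essentially the same route as the paper: the left inequality from the avoidance hypothesis $x\notin E_r$, the middle from the classical monotonicity of convergent errors, and the right from combining the standard bound $|x-a_{n-1}/q_{n-1}|<1/(q_{n-1}q_n)$ with the lower bound $q_{n-1}^{-r}\le |x-a_{n-1}/q_{n-1}|$ to force $q_{n-1}>q_n^{1/(r-1)}$, whence $|x-a_{n-1}/q_{n-1}|<q_n^{-r'}$ (the paper applies the $E_r$ hypothesis directly at index $n-1$ with the sharper constant $1/(2q_nq_{n-1})$, while you apply it at index $n$ and shift, but the two are equivalent). One small imprecision worth correcting: the constant $c_0$ appearing in the definition of $E_r$ is \emph{not} ``eaten up by taking $n$ large'' --- it is a fixed multiplicative loss, yielding $|x-a_n/q_n|\ge q_n^{-r}/c_0$ rather than $\ge q_n^{-r}$ --- but the paper silently makes the same gloss, and it is harmless since the lemma is only invoked alongside $\epsilon\to 0$ limiting arguments over the family $E_{r\pm\epsilon}$.
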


\begin{proof}
The first inequality follows from the definition of $E_r$ and the second comes from the basic
properties of continued fractions, that also assure $|x-a_{n-1}/q_{n-1}|<1/(2q_nq_{n-1})$. Using
$|x-a_{n-1}/q_{n-1}|\ge q_{n-1}^{-r}$ (because $x\not \in E_r$) we get the third inequality.
\end{proof}

\medskip

\begin{proof}[Proof of Theorem~\ref{mainth_upper}]

According to Proposition~\ref{weyl}, 
\[
 \beta_F(x)\ge \frac{\alpha-1}{k}+\frac{2^{-k}}{r-1}
\qquad\text{for every }x\not\in E_r\text{ and }2<r<1+k/2.
\]
Hence, for any small enough $\epsilon>0$,
\[
  d_F\Big(\frac{\alpha-1}{k}+\frac{2^{-k}}{r-1}\Big)
\le 
\dim_H\Big\{x\;:\; \beta_F(x)<\frac{\alpha-1}{k}+\frac{2^{-k}}{r-\epsilon-1}\Big\}
\le
\dim_H E_{r-\epsilon}.
\]
By Jarn\'{\i}k Theorem \cite[\S10.3]{Fal} $\dim_H E_{r-\epsilon}=2/(r-\epsilon)$ and letting
$\epsilon\to 0$ and writing $\beta=2^{-k}/(r-1)$, we conclude
\[
 d_F\Big(\frac{\alpha-1}{k}+\beta\Big)\le \frac{2\beta}{2^{-k}+\beta}\qquad
\text{for every }0<\beta<\frac{2^{-k}}k.
\]

The case $\beta=0$ follows with similar considerations letting $r\to \infty$ in 
\[
  d_F\Big(\frac{\alpha-1}{k}\Big)
\le 
\dim_H\Big\{x\;:\; \beta_F(x)<\frac{\alpha-1}{k}+\frac{2^{-k}}{r-1}\Big\}
\le
\dim_H E_{r}.
\]

\medskip

On the other hand, for $x\not\in E_r$, $r>2$, and $|h|$ small enough let $a/q$ the
first convergent of
$x$ satisfying $|x-a/q|<|h|/2$. We write
\begin{equation}
\label{triangF}
|F(x+h)-F(x)|\le |F(a/q+h_1)-F(a/q)|+|F(a/q+h_2)-F(a/q)|
\end{equation}
where $h_1=x+h-a/q$ and $h_2=x-a/q$. By Lemma~\ref{contfrac}, 
$2q^{-r}<|h|<2q^{-r'}$. Hence 
$q^{-r}<|h_1|<3q^{-r'}$ and  $q^{-r}<|h_2|<q^{-r'}$.

If the convergents of $x$ from one onward verify
\begin{equation}
\label{supineq}
\sup_{q^{-r}<|h|<3q^{-r'}}
\big|F(\frac aq+h)-F(\frac aq)\big| |h|^{-(\alpha-1/2)/k}\le q^{1-2/r},
\end{equation}
then substituting in (\ref{triangF})
\[
|F(x+h)-F(x)|\le
2q^{1-2/r} |h|^{(\alpha-1/2)/k}
\le
 |h|^{(\alpha-1/2)/k-(1-2/r)/r'}.
\]
Let $A_n$ be the set of the irreducible fractions $a/q\in [0,1]$, $2^n\le q<2^{n+1}$ not
satisfying
(\ref{supineq}), and let $A$ be the set of $x\in [0,1]$ having convergents in $A_n$ for infinitely
many values of $n$. Then
\begin{equation}
\label{inclEA}
\big\{
x\;:\; \beta_F(x)<	\frac{\alpha-1/2}{k}-\frac{1}{r'}\big(1-\frac 2r\big)
\big\}
\subset E_r\cup A.
\end{equation}

Clearly, for any $m$, $A$ admit the covering
\[
 A\subset \bigcup_{n=m}^\infty \bigcup_{a/q\in A_n}
\big(
\frac aq-\frac{1}{q^2}
,\frac aq+\frac{1}{q^2}
\big).
\]
By Proposition~\ref{osc2}, the cardinality of $A_n$ is $O\big(2^{n(4/r+\epsilon)}\big)$ as the
length of each intervals with $a/q\in A_n$  is $O\big(2^{-2n}\big)$ we have, letting $m\to\infty$,
that the $(2/r+\epsilon)$-Hausdorff measure of $A$ is zero for every $\epsilon>0$. Hence 
\[
d_F\Big(
\frac{\alpha-1/2}{k}-\frac 1{r'}\big(1-\frac 2r\big)
\Big)\le \dim_{\text{H}}(E_r\cup A)=\frac 2r.
\]
This gives
\[
d_F\big(\beta+\frac{\alpha-1}{k}\big)\le 
\frac 32-\sqrt{\frac{k+4}{4k}-2\beta} 
\qquad\text{for }\quad 0<\beta< \frac{1}{2k}
\]
writing $\beta=1/2k-(1-r^{-1})(1-2r^{-1})$.
\end{proof}

\section{Heuristics}

Our aim in this section is to comment our expectations regarding the true nature of $d_F$ and to explain our way of proceeding in the paper in light of them. 

\

A direct application of Parseval formula proves
\[
 \Big(\int_0^1\big|F(x+h)-F(x)|\; dx\Big)^{1/2}\sim
Ch^{\rho +1/2k}
\qquad\text{where } \rho=\frac{\alpha-1}{k},
\]
then we expect typically the H\"older exponent to be $\beta_F(x)=\rho+1/2k$. This is consistent with $\omega(\frac{1}{2k}^-)=1$ in Theorem~\ref{mainth_upper}. In fact, as we shall see later, it is likely that $\beta_F(x)\le \rho+1/2k$ for every irrational value $x$. On the other hand, it can be proven as in \cite[Corollary~2.3]{chaubi} that the points $a/q$ with $\tau_0=0$ have exponent $(\rho+1/2k)k/(k-1)$, the rest having exponent $\rho$.

As we mentioned in the introduction, the integration based techniques are wasteful here because they overlook $0$-measure sets and hence are incapable of detecting the fractal sets determined by H\"older exponents different from $\rho+1/2k$ that we need to prove the multifractal nature of $F$.

Given an irrational value $x$ and  $h>0$ small, we think that one can understand $F(x+h)-F(x)$ by looking at
$F(a/q+h)-F(a/q)$, where $a/q$ is the convergent in the continued fraction of $x$ nearest to $x+h$. 
This implies
\begin{equation}\label{range_continued}
 q^{-r} \ll h \ll (q q')^{-1}           \qquad r\ge 2,
\end{equation}
where $q^{-r}=|x-a/q|$ and $a'/q'$ is the previous convergent of $x$.

The factor $e\big(P(n)h\big)-1$ appearing in the series expansion of $F(a/q+h)-F(a/q)$  is small when $n$ is much smaller than $h^{-1/k}$ and the coefficients of the series decay when $n$ is large. This suggest that
we can model $F(a/q+h)-F(a/q)$ by $h^{\rho} S$ with
\[
 S= h^{1/k}\sum_{n\asymp h^{-1/k}} e(\frac{a P(n)}{q}).
\]

The range (\ref{range_continued}) includes $h\gg q^{-2}$. There $S$ is a short sum, and if we assume  nothing better than square root cancellation, i.e., $S\gg h^{1/2k}$, then we would get $\beta_F(x)\le \rho+1/2k$ as claimed above. 
We are not able to prove this lower bound for $S$ because  we do not have a \lq\lq trivial main term'' that we can separate.  On the other hand, it should be possible to push the methods in this paper to prove that $d_F(\rho+1/2k)\ge \nu_0/2k$.

We assume $r>k$, then when $h$ is close to its lower limit in \eqref{range_continued}, we have that the length of the range of summation is greater than $q$ and the periodicity leads to think that $S$ can be compared with the complete sum~$\tau_0$
\[
S\asymp q^{-1} \sum_{n \pmod{q}} e(\frac{a P(n)}q) =  q^{-1} \tau_0.
\]
Now, for prime $q$ there is always at least and typically at most, as shown in Lemma~\ref{lbtau}, square root cancellation in $\tau_0$, so we have $S\asymp  q^{-1/2}$ which 
gives $S\asymp h^{1/2r}$ under $h\asymp q^{-r}$. With this information we can build (see Proposition~\ref{set_unram}) a set with Hausdorff dimension $2/r$ whose elements $x$  have H\"older exponent $\beta_F(x)=\rho + 1/2r$, hence the lower bound $4\beta$ obtained in the main theorem for $d_F(\rho+\beta)$ follows.

The square root cancellation philosophy fails drastically for prime powers under  conditions depending on the fine structure of the polynomial $P$. In particular, when $\nu_F>1$ we
expect the main contribution in $S$ to come from arithmetic progressions in $n$ for which the phase of the exponential is essentially constant modulo 1. The simplest example is $P(x)=x^k$. In this case, by choosing $q=p^j$ with $p$ prime and $j\le k$ it follows that the exponential restricted to the sequence $n\equiv 0 \pmod{p}$ is constant, that is
\[
 h^{1/k}\sum_{\substack{n\asymp h^{-1/k}\\ n\equiv 0 \pmod{p}}} e(\frac{a n^k}{p^j}) = h^{1/k}\sum_{l\asymp h^{-1/k}/p} e(a p^{k-j} l^k) \asymp p^{-1},
\]
so in that part of the sum there is no cancellation. We think that the rest of $S$ gives a smaller contribution (actually in Proposition~\ref{osc1}
we proved it in average over $a$) and then we should get
$ S\asymp p^{-1} = q^{-1/j}$
that for $h\asymp q^{-r}$ reads $S\asymp  h^{1/jr}$. Proceeding as before, we can construct
a set of $x$ of dimension $(1+1/j)/r$ and constant H\"older exponent $\beta_F(x)=\rho+1/jr$, and then the lower bound  $(j+1)\beta$ for $d_F(\rho+\beta)$ follows. 
The best choice of $j$ is, of course, the largest value, that in our range is $j=k$ and corresponds to $j=\nu_F+1$ in the setting of the main theorem, as fixed in \S3. In principle one may think that $j>k$ could give a stronger lower bound but in this range the phase is not going to stay constant so there will be further cancellation; moreover, 
by the scarcity of the rationals of the form $a/p^j$, the
dimension is going to be smaller. 
By the chinese remainder theorem, the number of terms in special arithmetical progressions behaves multiplicatively, so taking $q$ as a prime power is the best choice to prevent cancellation in $S$ and even to get stronger lower bounds for $d_F$.

For some time we thought that the exponential can be essentially constant in a large arithmetic progression only if $P'$ has zeros of high order, this and the periodicity give the lower bound for $d_F$ proven in the paper. That guess was based on the fact that we had proven it \emph{in average over $a$} (see Proposition \ref{cheby}). Afterward we realized that it can also happen if some of the higher derivatives of $P$ has zeros of large order; but in that case it will happen for fractions $a/p^j$ with \emph{just very special} $a$ determined by arithmetic conditions. This will not give a better lower bound for $P(x)=x^k$, but it will do for instance for $P(x)=x^k+x^2$ with large $k$. In this last case the zeros of $P'$ are all of order one, so the lower bound for $d_F$ from the paper would be $4\beta$. But, although $P'$ does not have high order zeros, since $P'''$ does at $x=0$, the monomial $n^k$ in the phase can be ``deleted'' for $n\equiv 0 \pmod{p}$ by taking $q=p^k$:
\[
h^{1/k} \sum_{n\asymp h^{-1/k}, \, n\equiv 0 \pmod{p}} e(\frac{a (n^k+n^2)}{p^k}) = h^{1/k} \sum_{l\asymp h^{-1/k}/p} e(\frac{a p^2}{p^k}l^2).
\]
As we know, for most $a$ there is further cancellation in this sum, but not for the $a$ verifying $\langle ap^2/p^k \rangle \ll (h^{-1/k}/p)^{-2}$. There are around $q(h^{-1/k}/p)^{-2}$ of them. Actually, we can parametrize them as
\begin{equation}\label{aspecial}
 \frac{a}{p^k}=\frac{c}{p^2} + \frac{s}{p^k} \qquad 1\le c\le p^2 \qquad 1\le s \ll q (h^{-1/k})^{-2},
\end{equation}
with $s$ coprime to $p$. For these values, $S\asymp p^{-1}\asymp h^{1/kr}$ under $h\asymp q^{-r}$and, as before, we could build a set of $x$ with constant H\"older exponent $\beta_F(x)=1/kr$ and dimension
$
 ( 1-2(r-1)/k +1/k)/r,
$
since this time the number of the possible values of $a$ is $q^{1-2(r-1)/k}$. This would give a lower bound of $(k+3)\beta-2/k$ for $d_F(\rho+\beta)$, which is larger than our previous bound $4\beta$ in the range $\beta>2/k(k-1)$. 

This last example suggests that, for some polynomials, the lower bound given in Theorem~\ref{mainth} is not going to coincide with the true value of $d_F$. 
The obstruction to prove with our approach that it actually happens for $P(x)=x^k+x^2$ is that the rational numbers defined by \eqref{aspecial} are quite sparse and it seems difficult to get average results over them. For some very special polynomials, like $P(x)=x^k+x$, one can really prove that the lower bound of Theorem~\ref{mainth}  is not the real value of $d_F$ in some range by using Poisson summation and a precise knowledge of the coefficients $\tau_m$. 
In particular, one cannot expect an exact formula for the spectrum of singularities of $F$ only depending on $\nu_F$.

Even though we know that the lower bound from the paper is in general not sharp on the whole range, we think that it is so when $\beta$ is near zero. Our reasoning is that $S$ should always be bounded by $q^{-1/k}$ (notice that this equals $p^{-1}$ for $q=p^k$) which is $h^{1/rk}$ for $h\asymp q^{-r}$ and then gives an exponent of at least $\rho+\beta$ with $\beta=1/kr$. But then $\beta$ can be small only if $r$ is large, 
this means that we have very good rational approximations and Poisson summation  (Proposition~\ref{poisson}) allows to see that $S$ depends just on $\tau_0$, and the size of $\tau_0$ is controlled by the largest order of a zero of $P'$ (see Theorem~2 in \cite{loxvau}).

For $k>2$, we also think that $d_F$ should always have a discontinuity at $\rho+1/2k$, with $d_F(\rho+1/2k)=1$ and $d_F(\rho+1/2k^{-})<1$. This would represent a contrast with the quadratic case. The idea is that deviations from square root cancellation should come either from $r>k$ or from special $q$ (like prime powers), and both cases give small Hausdorff dimensions.

The behavior of $d_F$ for $P(x)=(x^2+1)^d+ x^2$ should be similar to $P(x)=x^k+x^2$; in both cases, we expect the graph of $d_F(\rho+\beta)$ to consist of two segments in the range $0<\beta<1/2k$, the lower bound from Theorem~\ref{mainth} for $\beta$ small and the one from the special $a$'s otherwise. But there are even more complex examples, like $P'''(x)=(x^2+1)^d+x^2$; for this case it is even difficult to guess, when taking the rationals $a/p^d$, the density of $a$ for which the phase remains essentially constant for the arithmetic progression $n\equiv b \pmod{p}$, with $b$ a root of $x^2+1$ modulo $p$, due to the fact that we do not control the location of $b$ in the interval $[1,p]$.

The previous examples suggest that the spectrum of singularities of $F$ depends on fine points about the structure of the polynomial $P$. On the other hand, these examples are very artificial and involve terms with unbalanced multiplicities. By this reason, we think that the lower bound in Theorem~\ref{mainth} becomes an equality in the whole range for most polynomials.

\bibliography{./spectrum}{}
\bibliographystyle{alpha}

\end{document}